\pdfoutput=1
\RequirePackage{ifpdf}
\ifpdf 
\documentclass[pdftex]{sigma}
\else
\documentclass{sigma}
\fi

\usepackage[all]{xy}

\numberwithin{equation}{section}

\newtheorem{Theorem}{Theorem}[section]
\newtheorem*{Theorem*}{Theorem}
\newtheorem{Corollary}[Theorem]{Corollary}
\newtheorem{Lemma}[Theorem]{Lemma}
\newtheorem{Proposition}[Theorem]{Proposition}
 { \theoremstyle{definition}
\newtheorem{Definition}[Theorem]{Definition}

\newtheorem{Remark}[Theorem]{Remark} }
\newtheorem{Conjecture}[Theorem]{Conjecture}

\begin{document}
\allowdisplaybreaks

\newcommand{\arXivNumber}{2002.08620}

\renewcommand{\PaperNumber}{110}

\FirstPageHeading

\ShortArticleName{A Composite Order Generalization of Modular Moonshine}

\ArticleName{A Composite Order Generalization\\ of Modular Moonshine}

\Author{Satoru URANO}

\AuthorNameForHeading{S.~Urano}

\Address{Division of Mathematics, University of Tsukuba,\\ 1-1-1 Tennoudai, Tsukuba, Ibaraki 305-8571 Japan}
\Email{\href{mailto:usaranoto@gmail.com}{usaranoto@gmail.com}}

\ArticleDates{Received March 31, 2021, in final form December 21, 2021; Published online December 24, 2021}

\Abstract{We introduce a generalization of Brauer character to allow arbitrary finite length modules over discrete valuation rings. We show that the generalized super Brauer character of Tate cohomology is a linear combination of trace functions. Using this result, we find a~counterexample to a conjecture of Borcherds about vanishing of Tate cohomology for Fricke elements of the Monster.}

\Keywords{moonshine; modular function; Brauer character; vertex operator algebra}

\Classification{11F22; 11F85; 17B69; 20C11; 20C20}

\section{Introduction}
Conway and Norton's monstrous moonshine conjecture \cite{CN} stated the existence of a graded representation $V=\bigoplus_{n \in \mathbb{Z}}V_n$ such that for any element $g$ of the monster group $\mathbb{M}$, the McKay--Thompson series $T_g(\tau)=\sum_{n \in \mathbb{Z}}\operatorname{Tr}(g|V_n)q^{n-1}$, $q={\rm e}^{2\pi {\rm i} \tau}$, is a Hauptmodul for some genus $0$ subgroup of ${\rm SL}_2(\mathbb{R})$ of the real numbers $\mathbb{R}$.
Frenkel, Lepowsky, and Meurman \cite{FLM} constructed a well-behaved graded representation as a vertex algebra, and Borcherds \cite{B92} showed that this vertex algebra satisfied Conway and Norton's conjecture.

The original modular moonshine conjectures of Ryba asserted the existence of vertex algebras~$^gV$ over finite fields with finite group actions, such that the graded Brauer characters are genus zero modular functions. He also suggested a construction of the vertex algebra $^{g}V$ (see Section~\ref{section3}) for an element $g \in \mathbb{M}$ of prime order~$p$ and of conjugacy class~$pA$ that is the largest conjugacy class of order~$p$, in terms of a self-dual integral form of the monster vertex algebra whose existence was proved later by Carnahan~\cite{C}.
Borcherds and Ryba reinterpreted the modular moonshine conjectures in terms of Tate cohomology with coefficients in an integral form~$V$ of the monster vertex algebra. They proved that the 1-st Tate cohomology $\hat{H}^1(g,V)$ for a~cyclic group $\langle g \rangle$ generated by any Fricke element $g$ (see Definition~\ref{def2.9}) of odd prime order vanishes. Borcherds proved that if $h$ is a $p$-regular element of $C_{\mathbb{M}}(g)$, then graded Brauer characters
$\widetilde{\operatorname{Tr}}\big(h|{\hat{H}}^i(g,V)\big)=\sum_{n \in \mathbb{Z}} \widetilde{\operatorname{Tr}}\big(h|{\hat{H}}^i(g,V_n)\big)q^{n-1}$, $i=0,1$, is a linear combination of Hauptmoduls for any non-Fricke element $g$ of odd prime order~\cite{B98,BR}. At the end of~\cite{B98}, Borcherds proposed a conjecture that asserts the existence of objects that unify modular moonshine with generalized moonshine. In his conjecture, conditions~2,~4 and~7 together imply $\hat{H}^1(g,V)$ vanishes for all Fricke elements~$g$. We will call this claim ``Borcherds's conjecture about vanishing of Tate cohomology''.

It is natural to consider Tate cohomology of~$V$ for elements of composite order, but there are no results in the literature. Tate cohomology does not yield vector spaces over a field when the element has composite order, but Brauer characters are only defined for such objects. Specifically, Tate cohomology yields torsion modules over the ground ring. We can't use the Brauer character for such modules because it is defined only for representations defined over a~field and $p$-regular elements. So, we need to define a generalized Brauer character.

In Section~\ref{section2}, we introduce some terminology in this paper. In Section~\ref{section3}, we summarize the modular moonshine conjecture for elements of prime order. In Section~\ref{section4}, we consider ramified extensions $R_p$ of the $p$-adic integers $\mathbb{Z}_p$ and calculate Tate cohomology. In Section~\ref{section5}, to generalize the modular moonshine conjectures, we define a generalized Brauer character and show that the generalized super Brauer character of Tate cohomology is a linear combination of traces. This is a generalization of \cite[Proposition~2.2]{BR} to the composite order case. Using this result, we give a counterexample to Borcherds's conjecture about vanishing of Tate cohomology. We propose a~weaker conjecture about vanishing of Tate cohomology.

\section{Definitions and basic notions}\label{section2}
 The symbols $\mathbb{Z}$, $\mathbb{Q}$, $\mathbb{R}$, $\mathbb{C}$, $\mathbb{F}_p$ will be used to denote the integers, rational numbers, real numbers, complex numbers, the finite field of prime~$p$ order, respectively.

 In this section, we introduce some terminology in this paper and will give brief definitions.

\begin{Definition}[\cite{C}]
Let $R$ be a commutative ring. A vertex algebra over $R$ is an $R$-module $V$ equipped with a distinguished element $\mathbf{1}\in V$, a distinguished linear transformation $T\colon V\rightarrow V$, and a left multiplication map
$Y(\cdot,z)\colon V\rightarrow(\operatorname{End} V)\big[\big[z,z^{-1}\big]\big]$, where $\operatorname{End} V$ is the endomorphisms on~$V$, written $Y(a,z)=\sum_{n\in\mathbb{Z}} a_nz^{-n-1}$ satisfying the following conditions:
 \begin{itemize}\itemsep=0pt
 \item For any $a \in V$, $Y(a,z)\mathbf{1}\in a+zV[[z]]$ and $Y\left(\mathbf{1},z\right)={\rm id}_V$, where ${\rm id}_V$ is the identity map on~$V$.
 \item For any $a,b \in V$, $Y(a,z)b\in V((z))$, i.e., $a_nb=0$ for $n$ sufficiently large.
 \item (The Jacobi identity) For any $a,b \in V$,
 \begin{gather*}
x^{-1}\delta \left( \frac{y-z}{x}\right) Y(a,y)Y(b,z)-x^{-1}\delta \left(\frac{z-y}{-x}\right)Y(b,z)Y(a,y)\\
\qquad{} =z^{-1}\delta \left(\frac{y-x}{z}\right)Y(Y(a,x)b,z),
 \end{gather*}
where $\delta (z)=\sum_{n\in \mathbb{Z}}z^n$, called the formal delta function.
 \item For any $a \in V$, $[T,Y(a,z)]=\frac{{\rm d}}{{\rm d}z}Y(a,z)$.
 \end{itemize}
$\mathbf{1}$ is called a vacuum vector and $Y(\cdot,z)$ is called a vertex operator.
\end{Definition}

\begin{Definition}
The Leech lattice $\Lambda$ is the unique free abelian group of rank 24 equipped with a symmetric bilinear form $\langle \cdot ,\cdot \rangle\colon \Lambda \times \Lambda \rightarrow \mathbb{Z}$ that satisfies the following conditions:
 \begin{itemize}\itemsep=0pt
 \item (even) For all $a \in \Lambda, \langle a,a \rangle \in 2\mathbb{Z}$.
 \item (positive definite) For all nonzero elements $a \in \Lambda$, $\langle a,a \rangle > 0$.
 \item (no roots) For all $a \in \Lambda$, $\langle a,a \rangle \neq 2$.
 \item (unimodular) If we choose a basis $(a_1,\dots,a_{24})$ of $\Lambda$, then we may form the Gram matrix $(\langle a_i,a_j \rangle)_{1\leq i,j\leq 24}$ and its determinant is~1.
 \end{itemize}
\end{Definition}

\begin{Definition}
A Lie algebra over a commutative ring $R$ is an $R$-module $\mathfrak{g}$ equipped with an $R$-bilinear map $[\cdot,\cdot]\colon \mathfrak{g}\times\mathfrak{g}\rightarrow \mathfrak{g}$ satisfying the following conditions:
 \begin{itemize}\itemsep=0pt
 \item For any $x \in \mathfrak{g}$, $[x,x]=0$.
 \item For any $x,y,z \in \mathfrak{g}$, $[x,[y,z]]+[y,[z,x]]+[z,[x,y]]=0$.
 \end{itemize}
\end{Definition}

 We review the construction of an integral form $V_\Lambda$ of a vertex algebra for the Leech lattice~$\Lambda$.
$\Lambda$ has a unique nonsplit central extension~$\hat{\Lambda}$ such that if we denote a lifting of $a \in \Lambda$ by ${\rm e}^a\in \hat{\Lambda}$, then ${\rm e}^a{\rm e}^b=\pm {\rm e}^{a+b}$ and ${\rm e}^a{\rm e}^b=(-1)^{\langle a,b\rangle}{\rm e}^b{\rm e}^a$ so we have an exact sequence
\[
0\rightarrow \{\pm1\}\rightarrow \hat{\Lambda}\rightarrow \Lambda \rightarrow 0.
\]
The group of automorphisms of $\hat{\Lambda}$ that preserve the inner product on $\Lambda$ is a nonsplit extension $2^{24}.\operatorname{Aut}(\Lambda)$ of the automorphism group $\operatorname{Aut}(\Lambda)$, and this group acts on $V_{\Lambda}$ faithfully.

A lattice vertex algebra is described by using notions of affine Lie algebras and twisted group rings. In this paper, we explain them briefly.

We consider a Lie algebra $\mathfrak{h}=\Lambda\otimes_\mathbb{Z}\mathbb{Q}$ with $[a,b]=0$ for any $a,b\in \mathfrak{h}$. Then we take the Lie algebra $\hat{\mathfrak{h}}=\mathfrak{h}\otimes \mathbb{Q}\big[t,t^{-1}\big]\oplus \mathbb{Q}c$ satisfying relations $\big[c,\hat{\mathfrak{h}}\big]=0$ and $[x\otimes t^m, y\otimes t^n]=\langle x,y \rangle m\delta_{m+n,0}c$, where $\delta_{i,j}$ is the Kronecker delta.
Let $\hat{\mathfrak{h}}^{\pm}$ be a subalgebra $\mathfrak{h}\otimes t^{\pm1}\mathbb{Q}\big[t^{\pm1}\big]$ and $\hat{\mathfrak{h}}^0$ be a subalgebra $\mathfrak{h}\otimes 1$. We view $\mathbb{Q}$ as $\hat{\mathfrak{h}}$-module which $\hat{\mathfrak{h}}^{\geq 0}$ acts as~0 and~$c$ acts as~1.

We view $\mathbb{Q}$ as $\{\pm1\}$-module which $\pm1$ acts as $\pm1$. The twisted group ring $\mathbb{Q}\{\Lambda \}$ of $\Lambda$ is defined to be $\mathbb{Q}\big[\hat{\Lambda}\big]\otimes_{\mathbb{Q}[\{ \pm 1\}]}\mathbb{Q}$ and is spanned by elements $\iota \big({\rm e}^\lambda\big):={\rm e}^\lambda \otimes 1$ for ${\rm e}^\lambda \in \hat{\Lambda}$. Then we have ${\rm e}^a\cdot\iota\big({\rm e}^\lambda\big)=\iota\big({\rm e}^a{\rm e}^\lambda\big)$ and $\{\pm1\}\cdot\iota\big({\rm e}^\lambda\big)=\pm\iota\big({\rm e}^\lambda\big)$, hence $\mathbb{Q}\{\Lambda \}$ is $\hat{\Lambda}$-module.

$V_\Lambda\otimes \mathbb{Q}$ is isomorphic to $\operatorname{Sym}_{\mathbb{Q}}\big(\hat{\mathfrak{h}}^-\big)\otimes \mathbb{Q}\{\Lambda \}$, where $\operatorname{Sym}_{\mathbb{Q}}(\cdot)$ means the symmetric algebra over $\mathbb{Q}$.
$\hat{\Lambda}$ and $\hat{\mathfrak{h}}$ act on $V_\Lambda\otimes \mathbb{Q}$ as following: For any $v \otimes \iota\big({\rm e}^\lambda\big) \in V_\Lambda$, $h\in \mathfrak{h}$, $n\neq 0$, and ${\rm e}^a \in \hat{\Lambda}$, writing $h\otimes t^n$ by $h(n)$,
\begin{gather*}
h(n)\cdot \big(v \otimes \iota\big({\rm e}^\lambda\big)\big)=h(n)v \otimes \iota\big({\rm e}^\lambda\big),\qquad h(0)\cdot \big(v \otimes \iota\big({\rm e}^\lambda\big)\big)=\langle h,\lambda \rangle \big(v \otimes \iota({\rm e}^\lambda)\big), \\
c\cdot\big(v \otimes \iota\big({\rm e}^\lambda\big)\big)=v \otimes \iota\big({\rm e}^\lambda\big),\\
 {\rm e}^a\cdot \big(v \otimes \iota\big({\rm e}^\lambda\big)\big)=v \otimes \iota\big({\rm e}^{a+\lambda}\big).
\end{gather*}
 $V_\Lambda\otimes \mathbb{Q}$ is graded by $\Lambda$, called $\Lambda$-degree, and we have $V_\Lambda\otimes \mathbb{Q}=\bigoplus_{\lambda \in \Lambda} \operatorname{Sym}_{\mathbb{Q}}\big(\hat{\mathfrak{h}}^-\big)\otimes \mathbb{Q}\iota \big({\rm e}^\lambda\big)$.

Let $\mathbf{1}$ be a vacuum vector of $V_{\Lambda}$ and let $Y(\cdot,z)$ be a vertex operator on $V_{\Lambda}$.
We define the operator $D^{(n)}$ by $Y(v,z)\mathbf{1}=\sum_{n\in \mathbb{Z}}D^{(n)}(v)z^n$ for any $v\in V_{\Lambda}$. The operator $D^{(n)}$ has the properties $D^{(0)}(v)=v$, $D^{(n)}D^{(m)}={n+m \choose n}D^{(n+m)}$, $D^{(n)}=0$ if $n<0$, and $D^{(n)}(vw)=\sum_{m\in \mathbb{Z}}D^{(m)}(v)D^{(n-m)}(w)$ for any $v,w \in V_\Lambda$. The derivation~$D$ is a element of $\hat{\mathfrak{h}}$ and has $\Lambda$-degree~0, i.e., $D^{(n)}(v)$ has the same $\Lambda$-degree as~$v$.

Suppose that we choose a basis $(a_1,\dots,a_{24})$ of $\Lambda$ and let~$p$ be a prime number. The sub\-ring~$V_{\Lambda,0}$ is generated as a commutative ring by ${\rm e}^{-a_j}D^{(i)}\big({\rm e}^{a_j}\big)$ for $i\geq 1$ and $1\leq j \leq 24$~\cite{BR}. Let $M$ be the free $\mathbb{Z}$-submodule of $\hat{\mathfrak{h}}_\mathbb{Q}$ given by the basis $\big\{{\rm e}^{-a_j}D^{(i)}\big({\rm e}^{a_j}\big)\big\}_{i\geq 1,1\leq j \leq 24}$. Then $V_{\Lambda}$ is isomorphic to $\operatorname{Sym}_{\mathbb{Z}}(M)\otimes \mathbb{Z}\{\Lambda\}$.

\begin{Definition}Let $R$ be a subring of the complex numbers $\mathbb{C}$. An $R$-vertex algebra $V$ is an $R$-form of the monster vertex algebra if $V\otimes_R\mathbb{C}\cong V^\natural$, where~$V^\natural$ is the monster vertex algebra over~$\mathbb{C}$.
\end{Definition}

\begin{Definition}
Let $G$ be a group and $N$ be an integer. An element $h$ of $G$ is $N$-regular if $(|h|,N)=1$.
\end{Definition}

\begin{Definition}
Let $R$ be a discrete valuation ring and $\mathfrak{m}_R$ the maximal ideal of $R$. Suppose that $k=R/\mathfrak{m}_R$ has prime characteristic $p$ and that the fraction field~$K$ of~$R$ has characteristic~$0$. The Brauer character of a $k[h]$-module~$M$ for a~$p$-regular element $h$ is defined by
\[
\widetilde{\operatorname{Tr}}(h|M)=\operatorname{Tr}\big(h|\widetilde{M}\otimes_RK\big),
\]
 where $\widetilde{M}$ is any $R$-free $R[h]$-module satisfying $M\cong\widetilde{M}\otimes_{R[h]}k[h]$.
\end{Definition}

\begin{Definition}[{\cite[Chapter IV]{CF}}]\label{Tate}
Suppose that $G$ is a group acting on an abelian group~$A$. Let ${\rm Nr}$ be the norm map $A\rightarrow A$; $a\mapsto\sum_{g\in G}{g(a)}$. Then, the $i$-th Tate cohomology ${\hat{H}}^i(G,A)$ is defined to be the following:
 \[
{\hat{H}}^i(G,A)= \begin{cases}
H_{-i-1}(G,A), & i\leq-2, \\
\operatorname{Ker}({\rm Nr})/\langle ga-a\,|\,a \in A, \, g\in G\rangle, & i=-1, \\
A^G/\operatorname{Im}({\rm Nr}), & i=0, \\
H^i(G,A), & i \geq1,
 \end{cases}
 \]
where $H_i(G,A)$ is the $i$-th homology, $H^i(G,A)$ is the $i$-th cohomology and $A^G$ is the largest submodule of $A$ on which $G$ acts trivially.
\end{Definition}

It has the following properties:
\begin{enumerate}\itemsep=0pt
\item[(1)] If $0\rightarrow A \rightarrow B \rightarrow C \rightarrow 0$ is a short exact sequence of $G$-modules, then we have a long exact sequence
\[
\cdots \rightarrow {\hat{H}}^{i-1}(G,C)\rightarrow {\hat{H}}^{i}(G,A)\rightarrow {\hat{H}}^{i}(G,B)\rightarrow {\hat{H}}^{i}(G,C)\rightarrow {\hat{H}}^{i+1}(G,A) \rightarrow \cdots.
\]
\item[(2)] $\bigoplus_{i\in \mathbb{Z}}{\hat{H}}^i(G,A)$ is $ |G |$-torsion module. In particular, if multiplication by $|G|$ is an isomorphism on $A$, then $\hat{H}^i(G,A)$ vanishes for all $i\in \mathbb{Z}$.
\end{enumerate}

Moreover, if $G$ is a cyclic group generated by $g$, then we have the following properties, where we denoted the Tate cohomology $\hat{H}^i(\langle g\rangle,A)$ for a cyclic group $\langle g\rangle$ by $\hat{H}^i(g,A)$.
\begin{enumerate}\itemsep=0pt
\item[(3)] ${\hat{H}}^i(g,A)\cong{\hat{H}}^{i+2}(g,A)$ for any $i\in\mathbb{Z}$.
\item[(4)]\label{Tsuper} ${\hat{H}}^\ast (g,A )=\hat{H}^0(g,A)\oplus\hat{H}^1(g,A)$ has more or less a super version of any $G$-invariant algebraic structure on~$A$. In particular, if~$V$ is a vertex algebra, then $\hat{H}^*(g,V)$ is a vertex superalgebra by the composite map
\[
\hat{H}^*(g,V)\otimes \hat{H}^*(g,V)\rightarrow \hat{H}^*(g,V\otimes V)\rightarrow \hat{H}^*(g,V((z)))\rightarrow \hat{H}^*(g,V)((z)).
\]
\item[(5)] If $A$ is a finite module, then $\big|{\hat{H}}^0(g,A)\big|=\big|{\hat{H}}^1(g,A)\big|$.
\end{enumerate}

\begin{Definition}
Let $\mathbb{H}$ be the upper half plane of $\mathbb{C}$. A Hauptmodul is a holomorphic function $f\colon \mathbb{H} \rightarrow \mathbb{C}$ satisfying the following conditions:
\begin{itemize}\itemsep=0pt
\item $f$ is invariant under some discrete subgroup $\Gamma_f$ of ${\rm SL}_2(\mathbb{R})$,
\item $f$ generates the meromorphic function field on $\mathbb{H}/\Gamma_f$.
\end{itemize}
\end{Definition}

\begin{Definition}\label{def2.9}
A monster element $g$ is Fricke if there is some $N\geq 1$ s.t.\ $T_g\big(\frac{-1}{N\tau}\big)=T_g(\tau)$ for any $\tau \in \mathbb{H}$.
\end{Definition}

\begin{Definition}
Let $R$ be a discrete valuation ring of mixed characteristic $(0,p)$, i.e., $R$ has chracteristic~0 and the residue field $R/\mathfrak{m}_R$ of $R$ has characteristic~$p$. Let~$S$ be the ring of integers of a~finite extension of the fraction field $\operatorname{Frac}(R)$ of $R$. Then $\mathfrak{m}^e_S=\mathfrak{m}_RS$ for some positive integer~$e$, called the ramification index. When $e=1$, $S$ is called an unramified extension of~$R$. On the other hand, when $e\neq 1$, $S$ is called a ramified extension of~$R$.
\end{Definition}

\begin{Definition}
Let $R$ be a commutative ring with prime characteristic~$p$. The Frobenius endomorphism $\operatorname{Frob}_p\colon R\rightarrow R$ is defined by $\operatorname{Frob}_p(r)=r^p$ for all $r\in R$.
\end{Definition}

If $R$ is a finite field, the Frobenius endomorphism is an automorphism.

\begin{Definition}
Let $A$ be a finite abelian group, $A^*$ be the homomorphisms $\operatorname{Hom}(A,\mathbb{C}^\times)$ from~$A$ to~$\mathbb{C}^\times$ and $f\colon A \rightarrow \mathbb{C}$ be a function. The discrete Fourier transform $\hat{f}\colon A^*\rightarrow \mathbb{C}$ is defined by $\hat{f}(\chi)=\sum_{a \in A}f(a)\chi (a)$ for any~$\chi \in A^*$.
\end{Definition}

There is an inverse discrete Fourier transform, it is described by the following
\[
f(a)=\frac{1}{|A|}\sum_{\chi \in A^*}\hat{f}(\chi)\overline{\chi}(a).
\]

\section{Modular moonshine for elements of prime order}\label{section3}

In this section, we summarize modular moonshine for elements of prime order \cite{B98,BR}.

Ryba proposed the modular moonshine conjecture. The existence of a self-dual integral form of the monster vertex algebra was not proved when he suggested it, but Carnahan proved it~\cite{C}.

\begin{Theorem}[the original modular moonshine conjecture]
Suppose that an element $g \in \mathbb{M}$ has prime order $p$ and lies in conjugacy class $pA$, and that $V$ is a self-dual integral form of the monster vertex algebra.
Let $V^g$ be the set of vectors fixed by $g$. We consider the following vertex algebra with characteristic~$p$:
 \[
{}^{g}V=\cfrac{V^g/pV^g}{(V^g/pV^g) \cap (V^g/pV^g)^{\perp}} .
\]
For $^{g}V=\bigoplus_{n \in \mathbb{Z}}{^{g}V_n}$, if $h$ is a $p$-regular element of the centralizer $C_{\mathbb{M}}(g)$ of $g$, then
\[
\sum_{n \in \mathbb{Z}} \widetilde{\operatorname{Tr}}(h|^{g}V_n)q^{n-1}=T_{gh}(\tau).
\]
 Here, we may take $R=\mathbb{Z}_p$ when computing Brauer character $\widetilde{\operatorname{Tr}}$.
 \end{Theorem}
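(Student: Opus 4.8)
The plan is to transfer the statement to the Tate cohomology of $V$, where the apparatus of Section~\ref{section2} and its prime-order precursor~\cite{BR} apply directly. Write $C=C_{\mathbb{M}}(g)$; after completing at $p$ each homogeneous component $V_n$ is a finitely generated $\mathbb{Z}_p$-lattice carrying commuting actions of $\langle g\rangle$ (of order $p$) and of the $p$-regular element $h$, and since $h$ has order prime to $p$ one may freely pass to an unramified extension of $\mathbb{Z}_p$ without changing any Brauer trace. \emph{Step 1: identify ${}^{g}V$ with $\hat{H}^0(g,V)$.} The self-dual form on $V$ restricts to a $\langle g\rangle$-invariant form on the fixed lattice $V^g$, and for $g$-fixed $v$ and arbitrary $u$ one has $\langle v,\operatorname{Nr}(u)\rangle=\sum_{j}\langle g^{-j}v,u\rangle=p\langle v,u\rangle$, so $\operatorname{Nr}(V)\subseteq V^g$ reduces into the radical of the induced form on $V^g/pV^g$. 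Self-duality of $V$ forces equality: dualizing identifies $(V^g)^{\ast}$ with the coinvariants $V_{\langle g\rangle}$ and $\operatorname{Nr}\colon V_{\langle g\rangle}\to V^g$ with its own transpose, whence a short argument with $\mathbb{Z}_p[\langle g\rangle]$-lattices shows the radical is exactly $\operatorname{Nr}(V)/pV^g$. Therefore
\[
{}^{g}V=\frac{V^g/pV^g}{(V^g/pV^g)\cap(V^g/pV^g)^{\perp}}\cong\frac{V^g}{pV^g+\operatorname{Nr}(V)}=\hat{H}^0(g,V),
\]
compatibly with the grading and with the $C$-action, which factors through $C/\langle g\rangle$. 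This is the Ryba-module counterpart of the identifications made in~\cite{BR}.

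\emph{Step 2: compute the graded super Brauer character of $\hat{H}^\ast(g,V)$.} As a $\mathbb{Z}_p[\langle g\rangle]$-lattice each $V_n$ is a direct sum of copies of the three indecomposables $\mathbb{Z}_p$, $\mathbb{Z}_p[\zeta_p]$ and $\mathbb{Z}_p[\langle g\rangle]$, and tracking the commuting $h$-action produces $h$-lattices $D$ and $B$ on the trivial and $\zeta_p$-type parts. Computing Tate cohomology of each type — free summands are cohomologically trivial, $\hat{H}^0$ of a trivial summand is its reduction, and $\hat{H}^1(g,\mathbb{Z}_p[\zeta_p])=\mathbb{Z}_p[\zeta_p]/(\zeta_p-1)$ — gives $\hat{H}^0(g,V_n)\cong D/pD$ and $\hat{H}^1(g,V_n)\cong B/(\zeta_p-1)B$ as $\mathbb{F}_p[h]$-modules. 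Matching this against the $g$-eigenspace decomposition of $V_n\otimes\mathbb{C}$ yields, precisely as in \cite[Proposition~2.2]{BR},
\[
\widetilde{\operatorname{Tr}}\big(h|\hat{H}^0(g,V_n)\big)-\widetilde{\operatorname{Tr}}\big(h|\hat{H}^1(g,V_n)\big)=\operatorname{Tr}\big(gh|V_n\otimes\mathbb{C}\big),
\]
so that, using $V\otimes\mathbb{C}\cong V^{\natural}$ and monstrous moonshine, $\sum_n\big[\widetilde{\operatorname{Tr}}(h|\hat{H}^0(g,V_n))-\widetilde{\operatorname{Tr}}(h|\hat{H}^1(g,V_n))\big]q^{n-1}=T_{gh}(\tau)$. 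This is the prime-order shadow of the result to be proved in Section~\ref{section5}.

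\emph{Step 3: kill the correction term.} It remains to show $\hat{H}^1(g,V)=0$, equivalently that in every degree $V_n$ has no $\mathbb{Z}_p[\zeta_p]$-summand. This is where the hypothesis $g\in pA$ — the largest class of order $p$, which for these primes is Fricke and whose $\mathbb{Z}/p$-orbifold of $V^{\natural}$ returns $V^{\natural}$ — is essential; the vanishing fails for other classes. I would invoke the vanishing theorem of~\cite{BR}, proved there for Fricke elements of odd prime order and, by the parallel argument, for $p=2$. Granting it, Step~2 collapses to $\widetilde{\operatorname{Tr}}(h|\hat{H}^0(g,V_n))=\operatorname{Tr}(gh|V_n\otimes\mathbb{C})$, and combining with Step~1,
\[
\sum_{n\in\mathbb{Z}}\widetilde{\operatorname{Tr}}\big(h|{}^{g}V_n\big)q^{n-1}=\sum_{n\in\mathbb{Z}}\widetilde{\operatorname{Tr}}\big(h|\hat{H}^0(g,V_n)\big)q^{n-1}=\sum_{n\in\mathbb{Z}}\operatorname{Tr}\big(gh|V_n\otimes\mathbb{C}\big)q^{n-1}=T_{gh}(\tau).
\]
Finally, $\hat{H}^0(g,V_n)\cong D/pD$ is the reduction of the $\mathbb{Z}_p[h]$-lattice $D$ — a mod-$p$ representation of the $p$-regular element $h$ — so no ramified extension intervenes and one may take $R=\mathbb{Z}_p$, as asserted.

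The step I expect to be the genuine obstacle is Step~3: Steps~1 and~2 are formal, resting only on the linear algebra of self-dual $\mathbb{Z}_p[\mathbb{Z}/p]$-lattices and on ordinary monstrous moonshine, whereas $\hat{H}^1(g,V)=0$ is special to the classes $pA$ and draws on the deeper structure of the self-dual integral form of $V^{\natural}$. A secondary subtlety is the prime $2$ in Step~1, where the bilinear form interacts with the non-split central extension $\hat{\Lambda}$ of the Leech lattice and the argument must be run with extra care.
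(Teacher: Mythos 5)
The paper does not actually prove this theorem: Section~3 states it as a summary of prior work, attributing the result to \cite{BR,B98}, and the surrounding text merely describes the Borcherds--Ryba strategy (reinterpret ${}^gV$ as Tate cohomology $\hat{H}^*(g,V)$, invoke Proposition~\ref{prop2.2} to get $\widetilde{\operatorname{Tr}}(h|\hat{H}^*(g,V))=\operatorname{Tr}(gh|V)$, then show $\hat{H}^1(g,V)=0$ for Fricke $g$ of prime order). Your three steps reconstruct exactly this chain of ideas, so the proposal is correct and follows the same route the paper gestures at; the one imprecision worth flagging is that you cite the vanishing of $\hat{H}^1$ as a single theorem of \cite{BR}, whereas the paper (and \cite{BR,B98}) split it into a lattice-permutation-module argument for small prime order (Theorem~\ref{pA}: $3A$, $3C$, $5A$, $7A$, $11A$, $23A$, plus $2A$ handled separately) and a Hodge-theoretic/no-ghost argument for large prime order, which your Step~3 should acknowledge.
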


Let~$G$ be a finite group including an element~$g$ of prime order. Borcherds and Ryba noticed that this definition of $^gV$ yields $\hat{H}^0(g,V)$. They redefined~$^gV$ to be $\hat{H}^*(g,V)$ and reinterpreted the conjectures in terms of Tate cohomology. To prove the conjectures, they used the fact that there are exactly~3 indecomposable finitely generated modules over the group ring $\mathbb{Z}_p[G]$ which are free as $\mathbb{Z}_p$-modules; there are $\mathbb{Z}_p$, $\mathbb{Z}_p[G]$ and $I$, which is the kernel of the natural map $\mathbb{Z}_p[G]\rightarrow \mathbb{Z}_p$.
By the following lemma, the Tate cohomology for any $\mathbb{Z}$-module is identified with the Tate cohomology for $\mathbb{Z}_p$-module.

\begin{Lemma}[{\cite[Lemma~2.1]{BR}}] \label{Zp}
If $A$ is a free $\mathbb{Z}[1/n]$-module acted on by a $p$-group $G$ with $(p,n)=1$, then the natural map from $\hat{H}^i(G,A)$ to $\hat{H}^i(G,A\otimes \mathbb{Z}_p)$ is an isomorphism for any $i\in \mathbb{Z}$.
\end{Lemma}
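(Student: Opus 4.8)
The plan is to exhibit $A\otimes\mathbb{Z}_p$ as an extension of $A$ by a $G$-module on which $|G|$ acts invertibly, and then combine the vanishing criterion with the long exact sequence for Tate cohomology.

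First I would assemble the ring-theoretic input. Since $(p,n)=1$, the integer $n$ is a unit in $\mathbb{Z}_p$, so $\mathbb{Z}[1/n]\otimes_{\mathbb{Z}}\mathbb{Z}_p\cong\mathbb{Z}_p$ and hence $A\otimes\mathbb{Z}_p\cong A\otimes_{\mathbb{Z}[1/n]}\mathbb{Z}_p$; I will work with the latter. The map $\mathbb{Z}[1/n]\hookrightarrow\mathbb{Z}_p$ is injective — both rings embed in $\mathbb{Q}_p$, with $\mathbb{Z}[1/n]\subset\mathbb{Q}\cap\mathbb{Z}_p=\mathbb{Z}_{(p)}\subset\mathbb{Z}_p$ — so, writing $Q$ for its cokernel, there is a short exact sequence $0\to\mathbb{Z}[1/n]\to\mathbb{Z}_p\to Q\to 0$. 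Because $A$ is free, hence flat, over $\mathbb{Z}[1/n]$, tensoring with $A$ over $\mathbb{Z}[1/n]$ preserves exactness and produces a short exact sequence of $G$-modules
\[
0\longrightarrow A\longrightarrow A\otimes\mathbb{Z}_p\longrightarrow A\otimes_{\mathbb{Z}[1/n]}Q\longrightarrow 0,
\]
in which $G$ acts trivially on the factors $\mathbb{Z}_p$ and $Q$, and the first arrow is precisely the natural map of the statement.

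The heart of the argument is that multiplication by $p$ is an isomorphism of $Q=\mathbb{Z}_p/\mathbb{Z}[1/n]$. Surjectivity: an element $x\in\mathbb{Z}_p$ with $p$-adic expansion $x=\sum_{i\ge 0}a_ip^i$ satisfies $x-a_0=p\sum_{i\ge 1}a_ip^{i-1}\in p\mathbb{Z}_p$, so $x\equiv p x'\pmod{\mathbb{Z}}$ for some $x'\in\mathbb{Z}_p$, a fortiori modulo $\mathbb{Z}[1/n]$. Injectivity: if $y\in\mathbb{Z}_p$ and $py\in\mathbb{Z}[1/n]$, then $y\in\mathbb{Q}\cap\mathbb{Z}_p=\mathbb{Z}_{(p)}$; writing $y=a/b$ in lowest terms with $p\nmid b$, the relation $pa/b\in\mathbb{Z}[1/n]$ forces $b$ to divide a power of $n$, whence $y\in\mathbb{Z}[1/n]$. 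Thus $Q$, and therefore $A\otimes_{\mathbb{Z}[1/n]}Q$, is a $\mathbb{Z}[1/p]$-module, so multiplication by $|G|$, which is a power of $p$, is an isomorphism on it.

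It then remains to feed this into the machinery. By property~(2) of Tate cohomology, $\hat{H}^i\bigl(G,A\otimes_{\mathbb{Z}[1/n]}Q\bigr)=0$ for every $i\in\mathbb{Z}$. Applying the long exact sequence of property~(1) to the short exact sequence above, the surrounding terms vanish and force $\hat{H}^i(G,A)\to\hat{H}^i(G,A\otimes\mathbb{Z}_p)$ to be an isomorphism for all $i$. The only non-formal step is the computation that $p$ acts invertibly on $\mathbb{Z}_p/\mathbb{Z}[1/n]$; once that is in hand, the conclusion is a routine invocation of the two listed properties.
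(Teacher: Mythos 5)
Your proof is correct and follows essentially the same route as the paper's argument (see the proof of the generalization given as Lemma~4.2 in Section~4): exhibit $A\otimes\mathbb{Z}_p$ as an extension of $A$ by $A\otimes(\mathbb{Z}_p/\mathbb{Z}[1/n])$, show multiplication by $|G|$ (a power of $p$) is an isomorphism on the cokernel, invoke property~(2) to kill its Tate cohomology, and finish with the long exact sequence of property~(1). Your treatment of injectivity, passing through $\mathbb{Q}\cap\mathbb{Z}_p=\mathbb{Z}_{(p)}$ before reducing to a denominator argument, is slightly more carefully justified than the paper's, but it is the same idea.
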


The next proposition is the main tool for calculating the Brauer character of a modular moonshine vertex super algebra.

\begin{Proposition}[{\cite[Proposition 2.2]{BR}}]\label{prop2.2}
Let $G$ be a finite group. Suppose that an element~$g$ of the center $\operatorname{Cent}(G)$ of $G$ has prime order $p$ and that $h \in G$ is a $p$-regular element. Let~$A$ be a finitely generated $\mathbb{Z}$-free $\mathbb{Z}[\langle g,h \rangle]$-module. Then $\hat{H}^*(g,A)=\hat{H}^0(g,A)-\hat{H}^1(g,A)$ is a virtual representation of $\langle h \rangle$, and
\[
\widetilde{\operatorname{Tr}}\big(h|\hat{H}^*(g,A)\big)=\operatorname{Tr}(gh|A) .
\]
\end{Proposition}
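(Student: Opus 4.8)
The plan is to reduce the computation to the three indecomposable $\mathbb{Z}_p[\langle g\rangle]$-lattices and check the identity on each of them, using additivity of both sides in short exact sequences. First I would invoke Lemma~\ref{Zp} to replace $A$ by $A\otimes\mathbb{Z}_p$: since $g$ has order $p$ and $h$ is $p$-regular, the Brauer character $\widetilde{\operatorname{Tr}}$ is computed over a discrete valuation ring $R$ with residue characteristic $p$ (e.g.\ an unramified extension of $\mathbb{Z}_p$ containing enough roots of unity so that all $k[\langle h\rangle]$-modules lift), and Tate cohomology of $A$ agrees with that of $A\otimes\mathbb{Z}_p$ compatibly with the $\langle h\rangle$-action. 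So without loss of generality $A$ is a $\mathbb{Z}_p$-free $\mathbb{Z}_p[\langle g,h\rangle]$-module.

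Next I would observe that both sides of the asserted equation are additive on short exact sequences of $\mathbb{Z}_p$-free $\mathbb{Z}_p[\langle g,h\rangle]$-modules. For the right-hand side this is clear, since $\operatorname{Tr}(gh|\cdot)$ is additive. For the left-hand side, the long exact sequence (property~(1) of Tate cohomology) shows that $\hat{H}^0(g,\cdot)-\hat{H}^1(g,\cdot)$ is additive in the Grothendieck group; one needs that each $\hat{H}^i(g,A)$ carries a genuine $k[\langle h\rangle]$-module structure on which the Brauer character is defined, which holds because $g$ is central so $\langle h\rangle$ acts on $\hat{H}^i(g,A)$, and these are finite modules of exponent dividing $p$, hence $k[\langle h\rangle]$-modules with $h$ acting $p$-regularly. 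Since $\mathbb{Z}_p[\langle g\rangle]$ is an order whose lattices (as noted in the excerpt, following Borcherds--Ryba) are, up to isomorphism, exactly $\mathbb{Z}_p$ (trivial action), $\mathbb{Z}_p[\langle g\rangle]$ (free), and $I=\ker(\mathbb{Z}_p[\langle g\rangle]\to\mathbb{Z}_p)$, any $A$ has a filtration by $\langle h\rangle$-stable submodules whose subquotients are tensor products of these three with $k[\langle h\rangle]$-modules; more care is needed here, which I flag below.

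Then I would compute both sides on the three building blocks. For the trivial lattice $\mathbb{Z}_p$: $\hat{H}^0(g,\mathbb{Z}_p)=\mathbb{Z}_p/p\mathbb{Z}_p$, $\hat{H}^1(g,\mathbb{Z}_p)=0$, so $\widetilde{\operatorname{Tr}}(h|\hat{H}^*)=\operatorname{Tr}(h|K)=1$, matching $\operatorname{Tr}(gh|\mathbb{Z}_p)=1$. For the free lattice $\mathbb{Z}_p[\langle g\rangle]$ (and more generally any induced module), Tate cohomology vanishes, so the left side is $0$; on the right, $gh$ acts on $\mathbb{Z}_p[\langle g\rangle]$ (as $g$-module) by a permutation-type operator with no fixed basis vectors when $g\neq 1$, giving trace $0$. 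For $I$: the exact sequence $0\to I\to\mathbb{Z}_p[\langle g\rangle]\to\mathbb{Z}_p\to 0$ together with additivity forces $\widetilde{\operatorname{Tr}}(h|\hat{H}^*(g,I))=0-1=-1=\operatorname{Tr}(gh|\mathbb{Z}_p[\langle g\rangle])-\operatorname{Tr}(gh|\mathbb{Z}_p)=\operatorname{Tr}(gh|I)$. The general case follows by additivity. The main obstacle is the devissage step: a general $\mathbb{Z}_p$-free $\mathbb{Z}_p[\langle g,h\rangle]$-module is not literally a direct sum of modules $L\otimes_{\mathbb{Z}_p}U$ with $L$ one of the three $\langle g\rangle$-lattices and $U$ an $\langle h\rangle$-lattice, so one must argue — using that $g$ is central and that $\mathbb{Z}_p[\langle h\rangle]$ is a maximal order after adjoining suitable roots of unity, or by passing to the $h$-isotypic decomposition over $K$ and tracking integral structures — that it admits an $\langle h\rangle$-stable filtration with such subquotients, and that this suffices to conclude by additivity of both sides. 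This is exactly the point where the hypothesis that $h$ is $p$-regular (so that $\mathbb{Z}_p[\langle h\rangle]$ is well-behaved and Brauer characters lift) is used essentially.
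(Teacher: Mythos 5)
This proposition is cited by the paper from \cite[Proposition~2.2]{BR} and is not reproved here; what the paper does prove is the composite-order generalization Theorem~\ref{2.2kai}, so that is the relevant point of comparison. Your route --- reduce to $\mathbb{Z}_p$ via Lemma~\ref{Zp}, use additivity of both sides in short exact sequences, and check the identity on the three indecomposable $\mathbb{Z}_p[\langle g\rangle]$-lattices $\mathbb{Z}_p$, $\mathbb{Z}_p[\langle g\rangle]$, $I$ --- is essentially the original Borcherds--Ryba argument, and your computations on the three building blocks are correct. The one real gap is the devissage you flag: over $\mathbb{Z}_p$ itself, the decomposition of $A$ into $\langle g\rangle$-indecomposables need not be $\langle h\rangle$-stable, so you cannot directly reduce to pieces of the form $L\otimes U$. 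The clean fix is to first base-change to the ring of integers $R$ of the \emph{unramified} extension of $\mathbb{Q}_p$ generated by $\zeta_{|h|}$. Since $(|h|,p)=1$, the group ring $R[\langle h\rangle]$ is a product of copies of $R$, so $A\otimes_{\mathbb{Z}_p}R$ splits as a direct sum of $h$-eigenspaces, each of which is an $R[\langle g\rangle]$-lattice and therefore a direct sum of the three indecomposables (whose classification is unchanged under unramified base change). Proposition~\ref{tensor} shows that Tate cohomology commutes with this flat base change, and Brauer characters are insensitive to unramified extension, so the reduction is harmless; your identification of each $\hat{H}^i(g,A)$ as a genuine $\mathbb{F}_p[\langle h\rangle]$-module (using centrality of $g$ and $p$-torsion of Tate cohomology) then goes through.

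By contrast, the paper's proof of the generalization Theorem~\ref{2.2kai} base-changes all the way to the \emph{ramified} extension $R_p=\mathbb{Z}_p[\zeta_{N|h|}]$, over which every $R_p$-free $R_p[\langle g,h\rangle]$-lattice is an iterated extension of rank-one modules $R_{n,m}$; it computes the Tate cohomology of each $R_{n,m}$ explicitly (Lemma~\ref{Tcoho}), packages the answer with the $p$-Brauer character (Lemma~\ref{Bch}), and recovers the trace coefficients by Fourier inversion on $\mathbb{Z}/N\mathbb{Z}$. The cost of the ramified extension is that $\hat{H}^i$ is no longer an $\mathbb{F}_p$-vector space, which is precisely why the normalized $p$-Brauer character of Definition~\ref{def5.1} is introduced. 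The benefit is that the argument works uniformly for composite $N$, whereas your three-indecomposables approach is tied to the fact that $\langle g\rangle$ is cyclic of prime order (the lattice classification for cyclic groups of higher prime-power or composite order is substantially more complicated and quickly becomes intractable). For the prime-order statement at hand, either route is fine; yours matches \cite{BR}.
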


We have the following assertion by applying Proposition~\ref{prop2.2} to an element of the monster group and the monster vertex algebra.

\begin{Corollary}
Suppose that $g$ is an element of $\mathbb{M}$ of odd prime order. Let $V$ be an integral form of the monster vertex algebra. If $h\in C_{\mathbb{M}}(g)$ is $p$-regular, then $\sum_{n\in \mathbb{Z}}\widetilde{\operatorname{Tr}}\big(h|\hat{H}^*(g,V_n)\big)q^{n-1}$ is equal to the McKay--Thompson series $T_{gh}(\tau)$. In particular, it is a~Hauptmodul for some genus~$0$ subgroup of ${\rm SL}_2(\mathbb{R})$.
\end{Corollary}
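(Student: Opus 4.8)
The plan is to apply Proposition~\ref{prop2.2} degree by degree and then sum up. Fix an odd prime $p$, an element $g\in\mathbb{M}$ of order $p$, and a $p$-regular element $h\in C_{\mathbb{M}}(g)$, and put $G=\langle g,h\rangle$. Since $h$ centralizes $g$ and $g$ centralizes itself, $g$ lies in $\operatorname{Cent}(G)$; it has prime order $p$, and $h$ is $p$-regular by hypothesis. For each $n\in\mathbb{Z}$ the graded piece $V_n$ of the integral form $V$ is a $\mathbb{Z}$-lattice inside the finite-dimensional space $V^{\natural}_n$, hence a finitely generated $\mathbb{Z}$-free module, and it is stable under the $\mathbb{M}$-action, so it is a finitely generated $\mathbb{Z}$-free $\mathbb{Z}[G]$-module. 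Thus Proposition~\ref{prop2.2} applies with $A=V_n$ and yields, for every $n$,
\[
\widetilde{\operatorname{Tr}}\big(h|\hat{H}^*(g,V_n)\big)=\operatorname{Tr}(gh|V_n),
\]
where $\hat{H}^*(g,V_n)=\hat{H}^0(g,V_n)-\hat{H}^1(g,V_n)$ is read as a virtual $\langle h\rangle$-representation.

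Next I would multiply by $q^{n-1}$ and sum over $n$. Since $g$ preserves the grading and the Tate cohomology of the finite cyclic group $\langle g\rangle$ is computed by $\hat{H}^0(g,A)=A^{\langle g\rangle}/\operatorname{Im}({\rm Nr})$ and $\hat{H}^1(g,A)=\operatorname{Ker}({\rm Nr})/\langle ga-a\rangle$, both of which commute with arbitrary direct sums, we get $\hat{H}^*(g,V)=\bigoplus_{n}\hat{H}^*(g,V_n)$ as graded virtual $\langle h\rangle$-modules, and hence
\[
\sum_{n\in\mathbb{Z}}\widetilde{\operatorname{Tr}}\big(h|\hat{H}^*(g,V_n)\big)q^{n-1}=\sum_{n\in\mathbb{Z}}\operatorname{Tr}(gh|V_n)q^{n-1}.
\]
Because $V\otimes_{\mathbb{Z}}\mathbb{C}\cong V^{\natural}$ as graded $\mathbb{M}$-modules, $\operatorname{Tr}(gh|V_n)=\operatorname{Tr}\big(gh|V^{\natural}_n\big)$ for all $n$; as $gh\in\mathbb{M}$, the right-hand side is by definition the McKay--Thompson series $T_{gh}(\tau)$. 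The ``in particular'' clause is then immediate from Borcherds's theorem \cite{B92} that every McKay--Thompson series $T_{gh}$ is a Hauptmodul for some genus~$0$ subgroup of ${\rm SL}_2(\mathbb{R})$.

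I do not expect a genuine obstacle here, since the entire content lies in Proposition~\ref{prop2.2} (i.e., \cite[Proposition~2.2]{BR}) together with the monstrous moonshine theorem. The only points deserving a word of care are: verifying that each $V_n$ is finitely generated and $\mathbb{Z}$-free so that the hypotheses of Proposition~\ref{prop2.2} are literally met; checking that passing to graded objects is legitimate, i.e.\ that $\hat{H}^i(g,-)$ commutes with the direct sum $\bigoplus_n V_n$ (true because $\langle g\rangle$ is finite), so that the degreewise identities may be summed; and recalling that the Brauer character $\widetilde{\operatorname{Tr}}$ appearing in these identities may be computed over $R=\mathbb{Z}_p$, which is consistent with the formulation of Proposition~\ref{prop2.2}. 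If anything is delicate it is only the bookkeeping around these compatibilities, not the mathematics.
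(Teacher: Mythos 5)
Your proof is correct and follows exactly the route the paper indicates: the paper states the Corollary as an immediate application of Proposition~\ref{prop2.2} to $A=V_n$ for each $n$, and you have simply spelled out the bookkeeping (graded pieces are finitely generated $\mathbb{Z}$-free $\mathbb{Z}[\langle g,h\rangle]$-modules, Tate cohomology commutes with the direct sum over $n$, and the trace of $gh$ on $V_n$ agrees with that on $V^\natural_n$). The final invocation of Borcherds's moonshine theorem for the Hauptmodul property is likewise what the paper intends.
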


Borcherds and Ryba proved that ${\hat{H}}^1(g,V)=0$ for any Fricke element $g$ of odd prime order. There are two main methods for proving ${\hat{H}}^1(g,V)=0$.

First, when $g$ has small order, we can transfer a vanishing statement from~$V_\Lambda$, using a decomposition of $V$ into submodules isomorphic to pieces of $V_\Lambda$~\cite{BR}.

\begin{Lemma}[{\cite[Lemma 4.1]{BR}}] \label{par}
If $A$ is a permutation module of a finite group $G$ over a~ring~$R$ with no~$|G|$ torsion, then $\hat{H}^{-1}(G,A)=0$.
$($A free $R$-module~$A$ acted on by a~group~$G$ is a~permutation module if it has a basis which is closed under the action of~$G$.$)$
\end{Lemma}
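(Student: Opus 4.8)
The plan is to reduce to the case of a transitive permutation module and then read off $\hat H^{-1}$ directly from the formula in Definition~\ref{Tate}. Since $A$ has a $G$-stable basis $B$, decompose $B$ into $G$-orbits $B=\bigsqcup_j \mathcal{O}_j$; this gives a $G$-module decomposition $A=\bigoplus_j R[\mathcal{O}_j]$. Tate cohomology commutes with finite direct sums (the norm map is additive and so are $H^i$, $H_i$), so $\hat H^{-1}(G,A)=\bigoplus_j \hat H^{-1}\big(G,R[\mathcal{O}_j]\big)$, and it suffices to treat a single orbit $\mathcal{O}$ with a chosen base point of stabilizer $H$, so that $R[\mathcal{O}]\cong R[G/H]$ as $G$-modules. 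Note that $R$ having no $|G|$-torsion (multiplication by $|G|$ injective) forces no $|H|$-torsion, since $|H|$ divides $|G|$: if $|H|r=0$ then $|G|r=(|G|/|H|)\,|H|r=0$, hence $r=0$.

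Next I would compute the norm map on $R[\mathcal{O}]$. For $a=\sum_{x\in\mathcal{O}}r_x x$, the coefficient of a fixed $y\in\mathcal{O}$ in ${\rm Nr}(a)=\sum_{g\in G}g(a)$ equals $\sum_{x\in\mathcal{O}}r_x\cdot\#\{g\in G\,:\,gx=y\}$. For each $x$ the set $\{g:gx=y\}$ is a coset of $\operatorname{Stab}(x)$, which has order $|H|$ (all point-stabilizers in one orbit are conjugate), so this coefficient is $|H|\sum_{x\in\mathcal{O}}r_x$, the same for every $y$. Hence ${\rm Nr}(a)=|H|\big(\sum_{x}r_x\big)\big(\sum_{y}y\big)$, and since $\{y\}$ is an $R$-basis and $R$ has no $|H|$-torsion, ${\rm Nr}(a)=0$ if and only if $\sum_{x}r_x=0$; that is, $\operatorname{Ker}({\rm Nr})$ is exactly the augmentation-zero submodule of $R[\mathcal{O}]$.

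Finally I would identify the submodule $\langle ga-a\mid a\in A,\ g\in G\rangle$. Any $g(a)-a=\sum_x r_x(gx-x)$ has augmentation $0$, so the submodule is contained in $\operatorname{Ker}({\rm Nr})$. Conversely, if $\sum_x r_x=0$, fix $x_0\in\mathcal{O}$ and write $a=\sum_x r_x(x-x_0)$; by transitivity each $x-x_0$ equals $g(x_0)-x_0$ for a suitable $g\in G$, so $a$ lies in the submodule. Thus $\langle ga-a\rangle=\operatorname{Ker}({\rm Nr})$, whence $\hat H^{-1}(G,R[\mathcal{O}])=0$, and summing over orbits gives $\hat H^{-1}(G,A)=0$. (Alternatively one may invoke Shapiro's lemma $\hat H^{-1}(G,\operatorname{Ind}_H^G R)\cong\hat H^{-1}(H,R)$ and note $\hat H^{-1}(H,R)=\{r\in R:|H|r=0\}=0$, but the computation above is self-contained.) I do not expect a real obstacle here; the only points needing care are that all point-stabilizers in one orbit have the same order, so that ${\rm Nr}$ is a clean scalar multiple of $\sum_y y$, and the identification of $\langle ga-a\rangle$ with the augmentation-zero submodule — after which the no-$|G|$-torsion hypothesis does the rest.
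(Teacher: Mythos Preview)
Your argument is correct. The orbit decomposition, the computation of ${\rm Nr}$ on a single transitive summand, and the identification of the augmentation-zero submodule with $\langle ga-a\rangle$ are all clean and complete; the no-$|G|$-torsion hypothesis is used exactly where it should be (to pass from $|H|\sum_x r_x=0$ to $\sum_x r_x=0$). The Shapiro-lemma alternative you mention is also valid and is in fact closer to how such results are typically packaged in the literature.

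As for comparison: the present paper does not prove this lemma at all---it merely quotes it as \cite[Lemma~4.1]{BR} and uses it as a black box. So there is no ``paper's own proof'' to compare against here. Your direct computation is a perfectly good self-contained substitute; the Shapiro route would be shorter if one is willing to invoke that machinery, while your explicit version has the virtue of making the role of the torsion hypothesis transparent.
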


\begin{Lemma}[{\cite[Lemma 4.2]{BR}}]\label{closedpar}
The set of permutation modules is closed under taking sums, tensor products, and symmetric powers.
\end{Lemma}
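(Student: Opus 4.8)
The plan is to argue combinatorially from the definition: a permutation module over $R$ is a free $R$-module carrying a distinguished $R$-basis that $G$ maps into itself. So let $A$ and $A'$ be permutation $R[G]$-modules with $G$-stable bases $B_A$ and $B_{A'}$ respectively, and in each of the three cases I exhibit an explicit $G$-stable basis of the construction in question.

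For direct sums, identify $B_A$ and $B_{A'}$ with their images in $A\oplus A'$ and take the union $B_A\sqcup B_{A'}$; it is an $R$-basis of $A\oplus A'$, and since $G$ preserves each summand it preserves the union, so $A\oplus A'$ is a permutation module. For tensor products over $R$, the set $\{b\otimes b'\mid b\in B_A,\ b'\in B_{A'}\}$ is an $R$-basis of $A\otimes_R A'$ because both factors are $R$-free, and the diagonal action sends $b\otimes b'$ to $g(b)\otimes g(b')$, which lies in this set since $g(b)\in B_A$ and $g(b')\in B_{A'}$; hence $A\otimes_R A'$ is a permutation module. For symmetric powers, fix a total order on $B_A$; then $\operatorname{Sym}^n_R(A)$ is $R$-free with basis the ordered monomials $b_{i_1}\cdots b_{i_n}$ ($b_{i_1}\le\cdots\le b_{i_n}$ in $B_A$), equivalently the size-$n$ multisets drawn from $B_A$. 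The induced action of $g$ carries such a monomial to $g(b_{i_1})\cdots g(b_{i_n})$, which after reordering is again one of these basis monomials (the multiset of factors is simply pushed forward by the permutation $g$ of $B_A$). So this basis is $G$-stable, and $\operatorname{Sym}^n_R(A)$ — and likewise the full symmetric algebra $\operatorname{Sym}_R(A)$ — is a permutation module. No finiteness of rank is used anywhere, so the argument applies verbatim to the infinite-rank modules arising from $V_\Lambda$.

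The only point requiring care is the symmetric-power case: one must invoke the fact that for a free module over an arbitrary commutative ring $R$ the monomials in basis elements form an $R$-basis of the symmetric power, so that no denominators or characteristic hypotheses enter (the clean statement over a characteristic-$0$ field would not suffice for the intended application at a prime $p$). Everything else is bookkeeping with $G$-stable bases, so I expect no real obstacle.
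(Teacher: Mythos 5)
Your proof is correct, and it is the standard combinatorial argument for this lemma: exhibit a $G$-stable basis of each construction given $G$-stable bases of the inputs. The paper itself does not reproduce a proof (it simply cites [BR, Lemma 4.2]), and your argument --- disjoint union of bases for sums, products of basis vectors for tensor products over $R$, ordered monomials in basis vectors for symmetric powers, with the remark that monomials form an $R$-basis of $\operatorname{Sym}^n_R$ of a free module over any commutative ring --- is exactly the argument used there, so there is nothing further to compare.
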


\begin{Lemma}[{\cite[Lemma 4.5 and Theorem~4.6]{BR}}]\label{lattice}
If $g$ is an element of odd prime order in a~Mathieu group $M_{24}$ viewed as a subgroup of $\operatorname{Aut}(\Lambda)$, then $\hat{H}^1(g,\Lambda)=0$ and $\hat{H}^1(g,V_\Lambda)=0$.
\end{Lemma}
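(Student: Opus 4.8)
The plan is to realize each of the two modules, after inverting $2$ (equivalently, after base change to $\mathbb{Z}_p$, where $2$ is a unit since $p$ is odd), as a \emph{permutation} module for the cyclic $p$-group $\langle g\rangle$, and then to invoke Lemma~\ref{par} together with the $2$-periodicity of Tate cohomology (property~(3), so that $\hat H^1(g,{-})\cong\hat H^{-1}(g,{-})$), using Lemma~\ref{Zp} to descend from $\mathbb{Z}_p$ back to $\mathbb{Z}$ and property~(2) to kill Tate cohomology of $2$-torsion modules. One preliminary: to let $\langle g\rangle$ act on $V_\Lambda$ at all we must lift $g\in M_{24}\le\operatorname{Aut}(\Lambda)$ to the group $2^{24}.\operatorname{Aut}(\Lambda)$ of inner-product-preserving automorphisms of $\hat\Lambda$; since the kernel $2^{24}$ is a $2$-group and $p$ is odd, there is a lift $\tilde g$ of order $p$, and I would fix one.

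\textbf{The lattice.} In the Golay-code/MOG model, $M_{24}$ acts on $\Lambda$ so that the rescaled copy $\sqrt{8}\,\Lambda$ --- isomorphic to $\Lambda$ as a $\langle g\rangle$-module --- is an $M_{24}$-invariant sublattice of the coordinate permutation lattice $\mathbb{Z}^{24}$ of index a power of $2$, with $M_{24}$ permuting the $24$ coordinates by its natural action. Since $p$ is odd, $\mathbb{Z}^{24}/\sqrt{8}\,\Lambda$ is a $2$-group, so property~(2) and the long exact sequence give $\hat H^i(g,\Lambda)\cong\hat H^i(g,\mathbb{Z}^{24})$ for all $i$. As $\mathbb{Z}^{24}$ is a permutation $\mathbb{Z}[\langle g\rangle]$-module and $\mathbb{Z}$ has no $p$-torsion, Lemma~\ref{par} gives $\hat H^{-1}(g,\mathbb{Z}^{24})=0$, whence $\hat H^1(g,\Lambda)\cong\hat H^{-1}(g,\Lambda)=0$.

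\textbf{The vertex algebra.} Since $\hat H^\ast(g,{-})$ commutes with direct sums, $\langle g\rangle$ preserves the conformal-weight grading, and each $V_{\Lambda,n}$ is a finite-rank free $\mathbb{Z}$-module, by Lemma~\ref{Zp} it suffices to prove each $V_{\Lambda,n}\otimes\mathbb{Z}_p$ is a permutation $\mathbb{Z}_p[\langle g\rangle]$-module (then Lemma~\ref{par} and periodicity close the argument as above). I would use $V_\Lambda\cong\operatorname{Sym}_\mathbb{Z}(M)\otimes_\mathbb{Z}\mathbb{Z}\{\Lambda\}=V_{\Lambda,0}\otimes_\mathbb{Z}\mathbb{Z}\{\Lambda\}$, under which $\tilde g$ acts diagonally. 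On $\mathbb{Z}_p\{\Lambda\}$, $\tilde g$ permutes the basis $\{\iota({\rm e}^\lambda)\}_{\lambda\in\Lambda}$ up to sign; because $p$ is odd the product of these signs around every $\langle g\rangle$-orbit is $1$, so after rescaling basis vectors by signs $\tilde g$ permutes them outright. For $V_{\Lambda,0}\otimes\mathbb{Z}_p$, I would first choose lattice vectors $b_1,\dots,b_{24}\in\Lambda$ forming a $\mathbb{Z}_p$-basis of $\Lambda\otimes\mathbb{Z}_p$ that $g$ permutes by a genuine permutation $\sigma$: this is possible since $\Lambda\otimes\mathbb{Z}_p$ is the coordinate permutation module and $\Lambda$ is dense in it, so one approximates orbit representatives by lattice vectors (taken in $\Lambda^g$ for the fixed orbits) and closes under $g$. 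With respect to $(b_j)$ the ring $V_{\Lambda,0}\otimes\mathbb{Z}_p$ is freely generated by the elements ${\rm e}^{-b_j}D^{(i)}({\rm e}^{b_j})$, $i\ge1$, and $\tilde g$ sends ${\rm e}^{-b_j}D^{(i)}({\rm e}^{b_j})$ to ${\rm e}^{-b_{\sigma(j)}}D^{(i)}({\rm e}^{b_{\sigma(j)}})$: using that the $D^{(i)}$ are $\operatorname{Aut}(\hat\Lambda)$-equivariant and choosing the section so that ${\rm e}^{-\lambda}=({\rm e}^\lambda)^{-1}$, the two cocycle signs cancel. Hence $\tilde g$ permutes the (homogeneous) monomial basis of $V_{\Lambda,0}\otimes\mathbb{Z}_p$, so $V_{\Lambda,0}\otimes\mathbb{Z}_p$ is a permutation module, and then $V_{\Lambda,n}\otimes\mathbb{Z}_p$ is one as a graded piece of the tensor product of two permutation modules (Lemma~\ref{closedpar}).

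\textbf{Main obstacle.} The hard part will be the claim used silently above: that after inverting $2$ the canonical integral form $V_{\Lambda,0}$ may be computed from the $g$-adapted $\mathbb{Z}_p$-basis $(b_j)$ in place of the original $\mathbb{Z}$-basis $(a_j)$ of $\Lambda$ --- i.e.\ that the monomials in the ${\rm e}^{-b_j}D^{(i)}({\rm e}^{b_j})$ still form a $\mathbb{Z}_p$-basis of $V_{\Lambda,0}\otimes\mathbb{Z}_p$. This rests on the change-of-$\Lambda$-degree law for the $D^{(i)}$ (governed by $D^{(0)}={\rm id}$, $D^{(n)}D^{(m)}=\binom{n+m}{n}D^{(n+m)}$ and $D^{(n)}(vw)=\sum_m D^{(m)}(v)D^{(n-m)}(w)$) expressing ${\rm e}^{-b_j}D^{(i)}({\rm e}^{b_j})$ as a polynomial with $\mathbb{Z}$-coefficients in the ${\rm e}^{-a_k}D^{(l)}({\rm e}^{a_k})$ that is triangular in $i$ with ``leading'' matrix the base-change matrix, which lies in $\operatorname{GL}_{24}(\mathbb{Z}_p)$; this is the same input that makes $V_{\Lambda,0}$ an $\operatorname{Aut}(\Lambda)$-stable ring. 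Pinning down this triangularity cleanly, together with the bookkeeping of cocycle signs and of the $p$-adic approximation, is where the real work lies; everything else is formal.
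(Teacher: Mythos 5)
Your overall strategy is the one the paper relies on: pass to $\mathbb{Z}_p$ via Lemma~\ref{Zp}, exhibit the relevant $\mathbb{Z}_p[\langle g\rangle]$-modules as permutation modules, and conclude with Lemma~\ref{par} and $2$-periodicity. (The paper itself does not reprove this lemma; it cites \cite[Lemma~4.5 and Theorem~4.6]{BR}, and its own generalization Lemma~\ref{lattice15A} outsources the same key steps back to BR.) Your lattice argument via $\sqrt{8}\,\Lambda\subset\mathbb{Z}^{24}$ with $2$-power cokernel is sound, and the sign-rescaling that makes $\mathbb{Z}_p\{\Lambda\}$ a permutation module is correct; combined with the factorization $V_\Lambda\cong V_{\Lambda,0}\otimes\mathbb{Z}\{\Lambda\}$ it is in substance the same as the $\Lambda$-orbit decomposition $V_\Lambda\otimes\mathbb{Z}_p\cong\bigoplus\mathbb{Z}_p[\langle g\rangle/\operatorname{Stab}_{\langle g\rangle}(\lambda)]\otimes(V_{\Lambda,0}\otimes\mathbb{Z}_p)$ appearing in the proof of Lemma~\ref{lattice15A}.

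The genuine gap is exactly where you flag it: the assertion that $V_{\Lambda,0}\otimes\mathbb{Z}_p$ is a permutation $\mathbb{Z}_p[\langle g\rangle]$-module is precisely \cite[Theorem~4.6]{BR}, which the paper uses as a black box. Your proposed route --- a $g$-permuted $\mathbb{Z}_p$-basis $(b_j)$ of $\Lambda\otimes\mathbb{Z}_p$ drawn from $\Lambda$, followed by the claim that the monomials in ${\rm e}^{-b_j}D^{(i)}({\rm e}^{b_j})$ again form a $\mathbb{Z}_p$-basis --- is plausible, and the preliminaries you handle ($p$-adic approximation using $\Lambda^g\otimes\mathbb{Z}_p=(\Lambda\otimes\mathbb{Z}_p)^g$ by flatness, and the sign cancellation $\epsilon_j^2=1$ on the generators) are fine. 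But the triangularity claim is the actual theorem and is not proved: you must show that ${\rm e}^{-b_j}D^{(i)}({\rm e}^{b_j})-\sum_k M_{jk}\,{\rm e}^{-a_k}D^{(i)}({\rm e}^{a_k})$ is a $\mathbb{Z}_p$-coefficient polynomial of strictly higher polynomial degree in the old generators, with $M=(M_{jk})\in\operatorname{GL}_{24}(\mathbb{Z}_p)$ the base-change matrix, so that on each conformal weight the monomial change of basis is block upper triangular with invertible blocks over $\mathbb{Z}_p$. The listed identities for $D^{(n)}$ motivate this but do not by themselves deliver the required integrality; until that is pinned down, the proof of $\hat H^1(g,V_\Lambda)=0$ is incomplete, and this is precisely the step the paper (and this lemma statement) delegates to \cite{BR}.
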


\begin{Theorem}[{\cite[Theorem 4.7]{BR}}]\label{pA}
Suppose that $g$ is an element of the monster of type $3A$, $3C$, $5A$, $7A$, $11A$, or $23A$. Then $\hat{H}^1(g,V)=0$.
\end{Theorem}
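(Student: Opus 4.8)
The six listed classes are exactly the classes of odd prime order in $\mathbb{M}$ that meet the copy of the Mathieu group $M_{24}$ inside $\operatorname{Aut}(\Lambda)$ (acting by coordinate permutations on the Leech lattice), hence inside $\mathbb{M}$ through the induced action on $V^\natural$, where $M_{24}$ preserves the $(-1)$-orbifold decomposition $V^\natural\cong V_\Lambda^+\oplus(V_\Lambda^T)^+$ and the resulting embedding is well defined on elements of odd order. So the plan is: (i) conjugate $g$ into $M_{24}$, reading off the fusion of $M_{24}$-classes into $\mathbb{M}$-classes from the known character-theoretic data, and set $p=|g|$; (ii) pass to the $p$-adic setting and to $\hat{H}^{-1}$; (iii) cut the integral form $V$ into $\langle g\rangle$-stable pieces coming from $V_\Lambda$ and from the twisted module $V_\Lambda^T$; (iv) show each piece is a permutation $\mathbb{Z}_p[\langle g\rangle]$-module and quote Lemma~\ref{par}. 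For (ii), since $V$ is a $\mathbb{Z}$-free $\mathbb{Z}[\langle g\rangle]$-module and $\langle g\rangle$ is a $p$-group, Lemma~\ref{Zp} lets me replace $V$ by $V\otimes\mathbb{Z}_p$, and the periodicity property~(3) of Tate cohomology reduces the claim to $\hat{H}^{-1}(g,V\otimes\mathbb{Z}_p)=0$. By Lemma~\ref{par}, together with the long exact sequence of property~(1) (which makes vanishing of $\hat{H}^{-1}$ stable under extensions), it then suffices to exhibit a finite $\langle g\rangle$-stable filtration of $V\otimes\mathbb{Z}_p$ whose successive quotients are permutation modules.

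\textbf{Reduction to $V_\Lambda$ and its twisted module.} Next I would invoke the decomposition of $V$ mirroring $V^\natural\cong V_\Lambda^+\oplus(V_\Lambda^T)^+$, chosen compatibly with $g\in M_{24}$ (this is the ``decomposition of $V$ into pieces of $V_\Lambda$'' of~\cite{BR}), so that it is enough to produce permutation-module filtrations for the $\langle g\rangle$-modules underlying integral forms of $V_\Lambda$ and of $V_\Lambda^T$. For $V_\Lambda$ this is contained in Lemma~\ref{lattice}: because $g\in M_{24}$ permutes the chosen basis of $\Lambda$, it permutes bases of the symmetric powers of $M$ and of $\mathbb{Z}\{\Lambda\}$ occurring in $V_\Lambda\cong\operatorname{Sym}_{\mathbb{Z}}(M)\otimes\mathbb{Z}\{\Lambda\}$, so $V_\Lambda$ is a permutation module by Lemma~\ref{closedpar}, and its $\hat{H}^{-1}$ (equivalently $\hat{H}^1$) vanishes.

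\textbf{The twisted module.} An integral form of $V_\Lambda^T$ has the shape $\operatorname{Sym}_{\mathbb{Z}}\big(\bigoplus_{n\ge1}\mathfrak{h}\otimes t^{n-1/2}\big)\otimes T$, where $T$ is a $2^{12}$-dimensional module for the finite $2$-group attached to $\hat{\Lambda}$. The oscillator factor is a permutation $\mathbb{Z}_p[\langle g\rangle]$-module because $g$ acts on the half-integral modes purely through its coordinate permutation on $\mathfrak{h}$ (Lemma~\ref{closedpar}). For the factor $T$ I would use that $g$, having order coprime to $2$, stabilizes a Lagrangian $L$ of $\Lambda/2\Lambda$ for the mod-$2$ quadratic form, and that $T$ can be realized on the group algebra of $L$; then $g$ permutes the resulting basis of $T$, so $T$ is a permutation module, and hence so is the tensor product, again by Lemma~\ref{closedpar}. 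Assembling these pieces yields the desired filtration of $V\otimes\mathbb{Z}_p$, whence $\hat{H}^{-1}(g,V\otimes\mathbb{Z}_p)=0$ and therefore $\hat{H}^1(g,V)=0$.

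\textbf{Main obstacle.} The step needing the most care is making the $(-1)$-orbifold decomposition of the monster vertex algebra precise at the level of $g$-stable \emph{integral} forms, and inside it verifying that the $2^{12}$-dimensional twisted-sector module $T$ is genuinely a permutation $\mathbb{Z}_p[\langle g\rangle]$-module, i.e.\ that $g$ stabilizes a Lagrangian of $\Lambda/2\Lambda$ and that the twisted group-algebra description survives reduction mod $p$. This is precisely the point where the hypothesis that $g$ lie in one of the six classes meeting $M_{24}$ is used, rather than in an arbitrary Fricke class of odd prime order; the remaining Fricke classes of odd prime order require the second (trace-function) method indicated above.
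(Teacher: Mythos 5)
The statement you are proving is cited verbatim from \cite[Theorem~4.7]{BR}; the paper under review does not reprove it, and the only thing it offers as a proxy is the proof of Theorem~\ref{15A21A}, which ``in the same way as the proof of Theorem~4.7 of~\cite{BR}'' reduces to Lemma~\ref{lattice15A}. The reduction machinery you set up (pass to $\mathbb{Z}_p$ by Lemma~\ref{Zp}, use periodicity to work with $\hat{H}^{-1}$, exhibit a $\langle g\rangle$-stable filtration with permutation-module quotients, and quote Lemmas~\ref{par},~\ref{closedpar},~\ref{lattice}) is exactly the framework the paper attributes to~\cite{BR}, so that part of your proposal is on target.

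Where you diverge is in the crucial step from $V_\Lambda$ to $V$. The paper's gloss on~\cite{BR} is ``a decomposition of $V$ into submodules isomorphic to \emph{pieces of $V_\Lambda$},'' not an integral-form version of the $\mathbb{Z}/2$-orbifold splitting $V^\natural\cong V_\Lambda^+\oplus(V_\Lambda^T)^+$. Your route introduces the twisted module $V_\Lambda^T$ and the $2^{12}$-dimensional module $T$, and this is where the genuine gaps appear. First, your assertion that an odd-order $g\in M_{24}$ stabilizes a Lagrangian of $\Lambda/2\Lambda$, and that $T$ can be realized on the group algebra of that Lagrangian with a $g$-permuted basis, is stated without argument; neither the existence of a $g$-stable Lagrangian nor the compatibility of that realization with the $\hat\Lambda$-module structure and with reduction mod $p$ is automatic, and these are nontrivial facts about the $2^{1+24}$-module $T$ that need to be established. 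Second, and more seriously, you yourself flag that the orbifold decomposition must be made to hold at the level of the specific integral form $V$; this is not a side issue but the heart of the matter. Recall that \cite{BR} predates Carnahan's self-dual integral form~\cite{C}, so the integral form they work with is built by hand, and there is no off-the-shelf splitting of it into integral $V_\Lambda^+$ and $(V_\Lambda^T)^+$ pieces. Absent a construction of such a $g$-stable integral splitting, the twisted-sector half of your filtration does not exist, and the argument does not close. In short: your reduction to permutation modules is correct and mirrors the paper's setup, but the decomposition you propose to feed into it differs from the one the paper points to, and the two steps specific to your route (the $g$-stable Lagrangian/group-algebra model of $T$, and the integral orbifold splitting) are exactly the unproved claims on which the whole argument would rest.
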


\begin{Remark}
Similarly, for an element $g$ of the monster type~$2A$, they proved $\hat{H}^1(g,V)=0$ by showing that $\hat{H}^1(g,\Lambda)$ and $\hat{H}^1(g,V_\Lambda)$ vanish. The method of proof is same as the case of odd prime order, but slightly more complicated.
\end{Remark}

Borcherds and Ryba don't use the assumption that $g$ has prime order in proofs of Lemma~\ref{lattice} and Theorem~\ref{pA}. However, this assumption is applied to use Lemma~\ref{Zp}. Hence, we will show that Lemma~\ref{Zp} holds without the assumption that~$G$ is a $p$-group, and that $\hat{H}^1(g,V)=0$ for an element $g$ of the monster of type~$15A$ or~$21A$ in Section~\ref{section4}.

 Second, when $g$ has large order, it suffices to show that the coefficients
\[
 c_g^-(n)=\dim\hat{H}^0(g,V_n)+\dim \hat{H}^1(g,V_n)
\]
 are equal to the coefficients
\[
 c_g^+(n)=\operatorname{Tr}(g|V_n)=\dim \hat{H}^0(g,V_n)-\dim \hat{H}^1(g,V_n).
\]
This was proved by combining Hodge theory, an improved ``no-ghost'' theorem, and some explicit computation.

Borcherds proved that graded Brauer characters
\[
\widetilde{\operatorname{Tr}}(h|{\hat{H}}^i(g,V))=\sum_{n \in \mathbb{Z}} \widetilde{\operatorname{Tr}}\big(h|{\hat{H}}^i(g,V_n)\big)q^{n-1}, \qquad i=0,1,
\]
 is a linear combination of Hauptmoduls for any non-Fricke element~$g$ of odd prime order~\cite{B98,BR}.

\begin{Theorem}[{\cite[Theorem 5.3]{BR}}]
If $g$ is an element of the monster of type~$2B$, then $\hat{H}^0(g,V_n)$ vanishes if $n$ is odd, and $\hat{H}^1(g,V_n)$ vanishes if~$n$ is even.
\end{Theorem}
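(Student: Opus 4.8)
The plan is to combine the super Brauer character identity of Proposition~\ref{prop2.2} with the explicit Frenkel--Lepowsky--Meurman realization $V^\natural\cong V_\Lambda^+\oplus V_\Lambda^{T,+}$, where $V_\Lambda^+$ is the fixed subspace of $V_\Lambda$ under the canonical lift $\theta$ of $-1\in\operatorname{Aut}(\Lambda)$ and $V_\Lambda^{T,+}$ is the corresponding summand of the $\theta$-twisted module. Under this identification the monster element $g$ of type $2B$ is the involution acting as $+1$ on the untwisted part $U:=V_\Lambda^+$ and as $-1$ on the twisted part $W:=V_\Lambda^{T,+}$; this is the involution for which the cyclic orbifold construction recovers $V_\Lambda$, and a short character computation confirms $T_g(\tau)=\eta(\tau)^{24}/\eta(2\tau)^{24}+24$. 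Since $\langle g\rangle\cong\mathbb{Z}/2\mathbb{Z}$ is a $2$-group, Lemma~\ref{Zp} lets us compute each $\hat H^i(g,V_n)$ after tensoring with $\mathbb{Z}_2$; over $\mathbb{Z}_2$ the indecomposable $\mathbb{Z}_2[\langle g\rangle]$-lattices are the trivial module $\mathbb{Z}_2$, the sign module $I$, and the free module $\mathbb{Z}_2[\langle g\rangle]$, with nonzero Tate cohomology $\hat H^0=\mathbb{F}_2$, $\hat H^1=\mathbb{F}_2$, and none, respectively. So if $V_n\otimes\mathbb{Z}_2\cong\mathbb{Z}_2^{\,a_n}\oplus I^{\,b_n}\oplus\mathbb{Z}_2[\langle g\rangle]^{\,c_n}$, it is enough to prove $a_n=0$ for $n$ odd and $b_n=0$ for $n$ even.

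First I would determine the difference $a_n-b_n$. Proposition~\ref{prop2.2} with $h=1$ gives $a_n-b_n=\widetilde{\operatorname{Tr}}\big(1\,|\,\hat H^*(g,V_n)\big)=\operatorname{Tr}(g|V_n)$, the coefficient of $q^{n-1}$ in $T_g(\tau)$. Now $T_g(\tau)-24=\eta(\tau)^{24}/\eta(2\tau)^{24}=q^{-1}\prod_{k\ \mathrm{odd}}(1-q^k)^{24}$, and for each odd $k$ the sign of the coefficient of $q^d$ in $(1-q^k)^{24}$ is $(-1)^d$ whenever that coefficient is nonzero; since this sign rule is preserved by products, the coefficient of $q^n$ in $\prod_{k\ \mathrm{odd}}(1-q^k)^{24}$ has sign $(-1)^n$. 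Hence $a_n-b_n=\operatorname{Tr}(g|V_n)\geq 0$ for $n$ even and $\leq 0$ for $n$ odd.

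It remains to show that in each degree at most one of $\hat H^0(g,V_n)$, $\hat H^1(g,V_n)$ is nonzero, i.e.\ $\min(a_n,b_n)=0$; together with the sign above this forces $a_n=0$ for odd $n$ and $b_n=0$ for even $n$, and the dictionary of the first paragraph finishes the proof. For this I would use that $V$ is the self-dual integral form of Carnahan and that the invariant bilinear form on $V^\natural$ is $g$-invariant, hence makes $U$ and $W$ orthogonal: the fixed lattice $A_n:=V_n^{\,g}$ (an integral form of $U_n$) and the $(-1)$-eigenlattice $B_n:=\{v\in V_n:gv=-v\}$ (an integral form of $W_n$) are orthogonal, satisfy $2V_n\subseteq A_n\oplus B_n\subseteq V_n$, and the self-duality of $V_n$ means that the finite $2$-group $V_n/(A_n\oplus B_n)$ is a Lagrangian subgroup of the discriminant form of $A_n\oplus B_n$. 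One must then show that this Lagrangian is as large as possible along the diagonal between the discriminant groups of $A_n$ and of $B_n$ --- concretely, that for every $w\in B_n$ there is an element of $V_n$ projecting onto $\tfrac12 w$ in the $W$-direction, and symmetrically in the $U$-direction --- which I would extract from the explicit form of the twisted module $V_\Lambda^T$ and of Carnahan's integral form. This gluing step is the main obstacle; once the $2$-adic interaction of the self-dual integral form with the two eigenspaces of $g$ is understood, the rest is formal.
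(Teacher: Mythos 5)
The paper does not prove this theorem; it cites it from Borcherds--Ryba \cite[Theorem~5.3]{BR}, so your proposal is really being compared with the original argument there.

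Your first half is a clean and correct reduction. Using Lemma~\ref{Zp}, the classification of $\mathbb{Z}_2[\mathbb{Z}/2]$-lattices, and Proposition~\ref{prop2.2} with $h=1$, you correctly reduce the theorem to the two assertions $\operatorname{sgn}(a_n-b_n)=(-1)^n$ and $\min(a_n,b_n)=0$, and your sign analysis of $T_{2B}(\tau)=q^{-1}\prod_{k\ \mathrm{odd}}(1-q^k)^{24}+24$ is correct (the sign rule really does propagate through products since $k$ is odd).

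The second half has a genuine gap, and it is not just the one you flag. You propose to obtain $\min(a_n,b_n)=0$ by showing that the Lagrangian $L=V_n/(A_n\oplus B_n)$ in the discriminant form of $A_n\oplus B_n$ is the graph of an (anti-)isometry $D(A_n)\to D(B_n)$. But for any unimodular lattice with an isometric involution this is automatic: since $A_n=V_n^g$ and $B_n=V_n^{-g}$ are the \emph{full} eigenlattices, $V_n\cap(A_n\otimes\mathbb{Q})=A_n$ and $V_n\cap(B_n\otimes\mathbb{Q})=B_n$, so the projections of $L$ to $D(A_n)$ and to $D(B_n)$ are always injective and hence $L$ is always a graph (this is Nikulin's standard observation). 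That fact fixes only $|D(A_n)|=|D(B_n)|=2^{c_n}$; it carries no information about $a_n$ or $b_n$. What is actually equivalent to $a_n=0$ (resp.\ $b_n=0$) is that the bilinear form of $V_n$ restricted to $A_n\otimes\mathbb{Z}_2$ (resp.\ $B_n\otimes\mathbb{Z}_2$) takes values in $2\mathbb{Z}_2$, i.e.\ that $A_n\subseteq 2A_n'$ $2$-adically, because $\operatorname{rank}A_n=a_n+c_n$ while $|D(A_n)|=2^{c_n}$. So the missing ingredient is a $2$-adic evenness statement about the fixed and anti-fixed sublattices of Carnahan's form, with the parity of the evenness alternating with $n$; the ``diagonal Lagrangian'' language does not encode it. Filling this in requires genuinely new input about the integral form (Borcherds--Ryba obtain it by working directly with the explicit lattice and twisted-module model rather than through discriminant-form gluing), so as written the proposal does not close.
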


\begin{Theorem}[{\cite[Theorem 6.1]{B98}}]
If $g$ is an element of the monster of type $3B$, $5B$, $7B$, and $13B$ and~$\sigma$ is the element of order~$2$ generating the center of $C_\mathbb{M}(g)/O_p(C_\mathbb{M}(g))$, where $O_p(C_\mathbb{M}(g))$ is the largest normal $p$-subgroup of $C_\mathbb{M}(g)$, then $O_p(C_\mathbb{M}(g))$ acts trivially on $^gV=\hat{H}^0(g,V)\oplus \hat{H}^1(g,V)$ and $\sigma$ fixes $\hat{H}^0(g,V)$ and acts as~$-1$ on $\hat{H}^1(g,V)$.
\end{Theorem}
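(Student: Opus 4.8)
The plan is to combine the Brauer-character identity of Proposition~\ref{prop2.2} with the structure of the centralizer and a modest amount of explicit information about $\hat{H}^*(g,V)$ coming from the ``no-ghost'' theorem. Write $C=C_{\mathbb{M}}(g)$, $p=|g|\in\{3,5,7,13\}$, and $O_p=O_p(C)$; for each of the four classes $O_p$ is an extraspecial $p$-group with centre $\langle g\rangle$, and $C/O_p$ has centre $\langle\sigma\rangle\cong\mathbb{Z}/2$. Since $V$ is $\mathbb{Z}$-free, each $\hat{H}^i(g,V_n)$ is a finite-dimensional $\mathbb{F}_p$-vector space; because $g\in\operatorname{Cent}(C)$, the group $C$ acts $\mathbb{F}_p$-linearly on each $\hat{H}^i(g,V_n)$, and by property~(4) it acts on the vertex superalgebra ${}^{g}V=\hat{H}^*(g,V)=\hat{H}^0(g,V)\oplus\hat{H}^1(g,V)$ by automorphisms. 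Note that $g$ itself acts trivially, since a group always acts trivially on its own Tate cohomology. Finally, by Proposition~\ref{prop2.2}, for every $p$-regular $h\in C$ one has $\widetilde{\operatorname{Tr}}\big(h|\hat{H}^*(g,V_n)\big)=\operatorname{Tr}(gh|V_n)$, the $q^{n-1}$-coefficient of $T_{gh}$; in particular $\dim\hat{H}^0(g,V_n)-\dim\hat{H}^1(g,V_n)=\operatorname{Tr}(g|V_n)=c_g^+(n)$.

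I would first show that $O_p$ acts trivially on ${}^{g}V$. Since $O_p$ is a normal $p$-subgroup of $C$ and the coefficients lie in characteristic $p$, it acts trivially on every simple $\mathbb{F}_p[C]$-module: the two-sided ideal of $\mathbb{F}_p[C]$ generated by the augmentation ideal of $\mathbb{F}_p[O_p]$ is nilpotent, hence lies in the Jacobson radical. Thus the $O_p$-action on each $\hat{H}^i(g,V_n)$ is unipotent, and, as $g$ already acts trivially, it factors through the elementary abelian quotient $E=O_p/\langle g\rangle$. Upgrading ``unipotent'' to ``trivial'' is the crux of the matter: here I would invoke the improved ``no-ghost'' theorem and Hodge theory, which describe $\hat{H}^0(g,V_n)$ and $\hat{H}^1(g,V_n)$ explicitly enough, case by case, to determine their $C$-module structure as actual (not merely virtual) modules and to verify that no unipotent $O_p$-contribution occurs. (One might instead try to argue that $\hat{H}^0(g,V)$ has no nonzero derivations, its weight-$1$ space $\hat{H}^0(g,V_1)$ being zero because $V_1=0$, so that a unipotent vertex-algebra automorphism of it must be trivial; but the automorphism/derivation correspondence is delicate in characteristic $p$, and one would still owe a separate argument for $\hat{H}^1$.) Once $O_p$ acts trivially, the entire $C$-action on ${}^{g}V$ factors through $C/O_p$.

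For the action of $\sigma$, choose an involution $s\in C$ whose image in $C/O_p$ is $\sigma$: the preimage of $\langle\sigma\rangle$ in $C$ has order $2|O_p|$ with $|O_p|$ odd, so a Sylow $2$-subgroup of it is $\langle s\rangle\cong\mathbb{Z}/2$, and $s\notin O_p$ forces the image of $s$ to be $\sigma$. Then $s$ is $p$-regular and, since $g\in\operatorname{Cent}(C)$, commutes with $g$, so $gs$ has order $2p$. As $s^2=1$ and $p$ is odd, $s$ acts on each $\hat{H}^i(g,V_n)$ with eigenvalues $\pm1$, so $\widetilde{\operatorname{Tr}}\big(s|\hat{H}^i(g,V_n)\big)$ lies between $-\dim\hat{H}^i(g,V_n)$ and $\dim\hat{H}^i(g,V_n)$, attaining an endpoint exactly when $s$ acts there as a single scalar. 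By Proposition~\ref{prop2.2}, $\widetilde{\operatorname{Tr}}\big(s|\hat{H}^0(g,V_n)\big)-\widetilde{\operatorname{Tr}}\big(s|\hat{H}^1(g,V_n)\big)$ is the $q^{n-1}$-coefficient of $T_{gs}$. The one remaining input is the identity $T_{gs}=\sum_n\big(\dim\hat{H}^0(g,V_n)+\dim\hat{H}^1(g,V_n)\big)q^{n-1}$, i.e.\ that the McKay--Thompson series of the order-$2p$ element $gs$ realises the generating function of $c_g^-$; this is proved by the same ``no-ghost''/Hodge computation that yields $c_g^-=c_g^+$ for large-order elements. Granting it, comparing the two coefficient identities forces $\widetilde{\operatorname{Tr}}\big(s|\hat{H}^0(g,V_n)\big)=\dim\hat{H}^0(g,V_n)$ and $\widetilde{\operatorname{Tr}}\big(s|\hat{H}^1(g,V_n)\big)=-\dim\hat{H}^1(g,V_n)$, that is, $s$ acts as $+1$ on $\hat{H}^0(g,V)$ and as $-1$ on $\hat{H}^1(g,V)$. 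Since the $C$-action factors through $C/O_p$ and $s$ maps to $\sigma$, this is precisely the asserted action of $\sigma$.

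The formal parts of this plan---the reduction of the $C$-action to $C/O_p$ once $O_p$ acts trivially, the construction of $s$, and the extremal-trace comparison---are routine. The genuine obstacle is supplying the two explicit inputs: that the unipotent $O_p$-action is in fact trivial, and that $c_g^-$ coincides with $T_{gs}$. Neither seems to follow from the formal properties of Tate cohomology alone, and both rest on the improved ``no-ghost'' theorem together with a case-by-case analysis; this is why the classes $3B$, $5B$, $7B$, $13B$ must be handled one at a time.
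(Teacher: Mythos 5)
The paper does not prove this theorem: it is quoted from Borcherds's Modular Moonshine III as \cite[Theorem~6.1]{B98} in the survey part of Section~\ref{section3}, so there is no in-paper proof for your sketch to be compared against. With that caveat, here is an assessment of the plan itself.

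The first half-step, that the normal $p$-subgroup $O_p$ acts unipotently on each $\hat{H}^i(g,V_n)$ because the augmentation ideal of $\mathbb{F}_p[O_p]$ generates a nilpotent two-sided ideal of $\mathbb{F}_p[C]$, is correct, and so is the reduction to $O_p/\langle g\rangle$ once one notes that $g$ acts trivially on its own Tate cohomology. But you have correctly flagged that this is exactly where the real work begins: extraspecial $p$-groups have plenty of nontrivial unipotent $\mathbb{F}_p$-modules, Tate cohomology gives you no formal semisimplicity, and the parenthetical derivation/automorphism idea does not close the gap (among other things $\hat{H}^1(g,V)$ is only a module over the vertex algebra $\hat{H}^0(g,V)$, not a subalgebra, so it has no autonomous derivation theory to exploit, and in characteristic~$p$ the exponential correspondence between nilpotent derivations and unipotent automorphisms is unreliable). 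For the $\sigma$-half, the extremal argument is the right shape: combining $\widetilde{\operatorname{Tr}}\bigl(s|\hat{H}^0(g,V_n)\bigr)-\widetilde{\operatorname{Tr}}\bigl(s|\hat{H}^1(g,V_n)\bigr)=\operatorname{Tr}(gs|V_n)$ from Proposition~\ref{prop2.2} with the hypothesised $\operatorname{Tr}(gs|V_n)=\dim\hat{H}^0(g,V_n)+\dim\hat{H}^1(g,V_n)$, the bound $\bigl|\widetilde{\operatorname{Tr}}\bigl(s|\hat{H}^i\bigr)\bigr|\leq\dim\hat{H}^i$ for an involution forces $s$ to act as $+1$ on $\hat{H}^0$ and as $-1$ on $\hat{H}^1$. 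The trouble is that the hypothesised identity $T_{gs}=\sum_n\bigl(\dim\hat{H}^0(g,V_n)+\dim\hat{H}^1(g,V_n)\bigr)q^{n-1}$ is precisely the technical heart of Borcherds's paper, obtained there from the improved no-ghost theorem and explicit computation rather than from the formal apparatus of Proposition~\ref{prop2.2}; treating it as an input makes the reasoning circular with respect to the hardest part of the theorem. In short, the plan is structurally reasonable and probably close in spirit to the real proof, but both of the places you label as ``the genuine obstacle'' are indeed genuine, and as written this is a programme rather than a proof.
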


We have the following complete description of $\widetilde{\operatorname{Tr}}\big(h|{\hat{H}}^i(g,V)\big)$, $i=0,1$,
\begin{gather*}
\widetilde{\operatorname{Tr}}\big(h|{\hat{H}}^0(g,V)\big)= \begin{cases}
T_{gh}(\tau), & g \in pA,3C,\vspace{1mm}\\
\dfrac{T_{gh}(\tau)+T_{gh}(\tau +1/2)}{2}, & g \in 2B, \vspace{1mm}\\
\dfrac{T_{gh}(\tau)+T_{gh\sigma}(\tau)}{2}, & g \in pB,2|(p-1),
\end{cases}
\\
 \widetilde{\operatorname{Tr}}\big(h|{\hat{H}}^1(g,V)\big)= \begin{cases}
0, & g \in pA,3C,\vspace{1mm}\\
\cfrac{-T_{gh}(\tau)+T_{gh}(\tau +1/2)}{2}, & g \in 2B, \vspace{1mm}\\
\cfrac{-T_{gh}(\tau)+T_{gh\sigma}(\tau)}{2}, & g \in pB,2|(p-1),
\end{cases}
\end{gather*}
where $\sigma \in C_{\mathbb{M}}(g)/O_p(C_{\mathbb{M}}(g))$ is an involution which acts as $1$ on $\hat{H}^0(g,V)$ and as $-1$ on $\hat{H}^1(g,V)$.

\section{Tate cohomology and ramified extensions}\label{section4}

In this section, we will prove properties of Tate cohomology related to ramified extensions and complete the proof of Theorem~4.7 of~\cite{BR} for an element of the monster of type~$15A $ or~$21A$.

Let $R$ be a discrete valuation ring. Let $g \in G$ be an element of order~$N$, $h \in G$ be an $N$-regular element and~$A$ be an $R[g]$-module.

\begin{Proposition}\label{tensor}
If a commutative ring homomorphism $R \rightarrow S$ is flat, then $\hat{H}^*(g,A\otimes_{R}S)\cong \hat{H}^*(g,A)\otimes_{R}S$.
\end{Proposition}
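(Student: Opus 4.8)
The plan is to realize Tate cohomology, in the cyclic case, as the (co)homology of an explicit two-periodic complex and then use flatness to commute the base change $-\otimes_R S$ with taking cohomology. Concretely, for the cyclic group $G=\langle g\rangle$ of order $N$, write $\mathrm{Nr}=1+g+\cdots+g^{N-1}$ and $D=g-1$ as elements of $R[G]$. Then for any $R[g]$-module $A$ one has the standard complete resolution, and consequently $\hat H^{0}(g,A)=\ker(D)/\mathrm{im}(\mathrm{Nr})$ and $\hat H^{1}(g,A)=\ker(\mathrm{Nr})/\mathrm{im}(D)$, i.e.\ $\hat H^{\ast}(g,A)$ is computed as the cohomology of the two-periodic complex
\[
\cdots \xrightarrow{\ \mathrm{Nr}\ } A \xrightarrow{\ D\ } A \xrightarrow{\ \mathrm{Nr}\ } A \xrightarrow{\ D\ } A \xrightarrow{\ \mathrm{Nr}\ } \cdots .
\]
(For $i\le -2$ and $i\ge 1$ one invokes property~(3) in the excerpt, $\hat H^{i}\cong\hat H^{i+2}$, so it suffices to treat $i=0,1$.)

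Next I would observe that both $\mathrm{Nr}$ and $D$ are $R$-linear, so after applying $-\otimes_R S$ the complex above becomes exactly the analogous two-periodic complex for the $S[g]$-module $A\otimes_R S$, with differentials the norm and $g-1$ maps on $A\otimes_R S$. Since $R\to S$ is flat, tensoring with $S$ is exact and hence commutes with kernels, images, and subquotients; therefore
\[
\hat H^{\ast}(g,A)\otimes_R S
=\bigl(\ker/\mathrm{im}\bigr)\otimes_R S
\cong \ker\otimes_R S \big/ \mathrm{im}\otimes_R S
\cong \hat H^{\ast}(g,A\otimes_R S),
\]
and one checks that this isomorphism is induced by the natural map $\hat H^{\ast}(g,A)\to\hat H^{\ast}(g,A\otimes_R S)$ coming from functoriality of Tate cohomology in the coefficient module. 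The same argument applied degreewise gives the statement for $\hat H^{0}$ and $\hat H^{1}$ separately, not merely for the sum.

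The only mild subtlety — and the point I would be most careful about — is making sure the two-periodic complex genuinely computes Tate cohomology in the generality stated, where $A$ is an arbitrary $R[g]$-module rather than, say, $R$-free; but this is classical for finite cyclic groups (the complete resolution of $\mathbb{Z}$ over $\mathbb{Z}[G]$ is two-periodic and consists of free modules, so it stays a resolution after $\otimes_R$), and the identification of $\hat H^{-1}$ and $\hat H^{0}$ with the subquotients above is exactly Definition~\ref{Tate}. One should also note that flatness is used only to get exactness of $-\otimes_R S$; no finiteness or local hypotheses on $A$ are needed. With these remarks in place the proof is essentially the display above, so I would keep it short.
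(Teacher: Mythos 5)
Your proof is correct and takes essentially the same approach as the paper's: both identify $\hat H^0(g,A)$ and $\hat H^1(g,A)$ as the subquotients $\operatorname{Ker}(g-1)/\operatorname{Im}({\rm Nr})$ and $\operatorname{Ker}({\rm Nr})/\operatorname{Im}(g-1)$, reduce to degrees $0$ and $1$ by two-periodicity, and use flatness of $R\to S$ to commute $-\otimes_R S$ with forming kernels and images. The paper spells this out via a commutative diagram and the Five Lemma rather than directly invoking that flat base change preserves kernels, images, and subquotients, but the underlying reasoning is the same.
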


\begin{proof}Let ${\rm Nr}$ be a norm map as in Definition~\ref{Tate}. Then, $\hat{H}^0(g,A)\cong \operatorname{Ker}(g-1)/\operatorname{Im}({\rm Nr}).$ We have an exact sequence of $R$-modules $0\rightarrow \operatorname{Im}({\rm Nr})\rightarrow \operatorname{Ker}(g-1)\rightarrow \hat{H}^0(g,A)\rightarrow 0$. If $R \rightarrow S$ is flat, the sequence of $S$-modules $0\rightarrow \operatorname{Im}({\rm Nr})\otimes_R S\rightarrow \operatorname{Ker}(g-1)\otimes_R S\rightarrow \hat{H}^0(g,A)\otimes_R S\rightarrow 0$ is exact. We consider the following diagram:
\[ \xymatrix{
0\ar[r] & \operatorname{Im}({\rm Nr}\otimes_R {\rm id}_S)\ar[d] \ar[r] & \operatorname{Ker}((g-1)\otimes_R {\rm id}_S)\ar[d] \ar[r] & \hat{H}^0(g,A\otimes_R S)\ar[d]^{\psi_0} \ar[r] & 0 \\
 0\ar[r] & \operatorname{Im}({\rm Nr})\otimes_R S\ar[r] & \operatorname{Ker}(g-1)\otimes_R S\ar[r] & \hat{H}^0(g,A)\otimes_R S\ar[r] & 0.
 }
\]
Then, we have $\operatorname{Im}({\rm Nr}\otimes_R {\rm id}_S)\cong \operatorname{Im}({\rm Nr})\otimes_R S$ and $\operatorname{Ker}((g-1)\otimes_R {\rm id}_S)\cong \operatorname{Ker}(g-1)\otimes_R S$. Indeed, we can take the isomorphisms ${\rm Nr}\otimes_R {\rm id}_S(x\otimes s) \mapsto {\rm Nr}(x)\otimes s$ and $x\otimes s \mapsto x\otimes s$, respectively.
Also, the above diagram is commutative. Hence, by the Five lemma, $\psi_0$ is an isomorphism.

Like the degree 0 case, we have an exact sequence $0\rightarrow \operatorname{Im}(g-1)\rightarrow \operatorname{Ker}({\rm Nr})\rightarrow \hat{H}^1(g,A)\rightarrow 0$. As well above, we consider the following commutative diagram:
\[ \xymatrix{
0\ar[r] & \operatorname{Im}((g-1)\otimes_R {\rm id}_S)\ar[d] \ar[r] & \operatorname{Ker}({\rm Nr}\otimes_R {\rm id}_S)\ar[d] \ar[r] & \hat{H}^1(g,A\otimes_R S)\ar[d]^{\psi_1} \ar[r] & 0 \\
 0\ar[r] & \operatorname{Im}(g-1)\otimes_R S\ar[r] & \operatorname{Ker}({\rm Nr})\otimes_R S\ar[r] & \hat{H}^1(g,A)\otimes_R S\ar[r] & 0.
 }
\]
Then, we have $\operatorname{Im}((g-1)\otimes_R {\rm id}_S)\cong \operatorname{Im}(g-1)\otimes_R S$ and $\operatorname{Ker}({\rm Nr}\otimes_R {\rm id}_S)\cong \operatorname{Ker}({\rm Nr})\otimes_R S$. Indeed, we can take the isomorphisms $(g-1)\otimes_R {\rm id}_S(x\otimes s) \mapsto (gx-x)\otimes s$ and $x\otimes s \mapsto x\otimes s$, respectively.
By the Five lemma, $\psi_1$ is an isomorphism.
\end{proof}

Using the above proposition, we will prove that the 1-st Tate cohomology for an element of the monster type~$15A$ or~$21A$ vanishes. To do this, we will show the following lemma.

\begin{Lemma}\label{Zpkai}
If $A$ is a free $\mathbb{Z}[1/q]$-module acted on by a group $G$ with $|G|=p^kq $ and $(p,q)=1$, then the natural map from $\hat{H}^i(G,A)$ to $\hat{H}^i(G,A\otimes \mathbb{Z}_p)$ is an isomorphism for any $i\in \mathbb{Z}$.
\end{Lemma}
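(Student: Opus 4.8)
The plan is to reuse the strategy of Lemma~\ref{Zp}; the only new input needed is that, under the present hypotheses, the Tate cohomology is still $p$-power torsion even though $G$ need not be a $p$-group. \emph{Step 1 (torsion reduction).} Since $\bigoplus_i\hat{H}^i(G,A)$ is $|G|=p^kq$-torsion, and since $A$ is a $\mathbb{Z}[1/q]$-module, multiplication by $q$ is an automorphism of $A$ as a $\mathbb{Z}[G]$-module and hence, by functoriality, of every $\hat{H}^i(G,A)$; combining these gives $p^k\hat{H}^i(G,A)=0$. Because $(p,q)=1$ makes $q$ a unit in $\mathbb{Z}_p$, the same argument applied to the $\mathbb{Z}_p$-module $A\otimes_{\mathbb{Z}}\mathbb{Z}_p$ gives $p^k\hat{H}^i(G,A\otimes_{\mathbb{Z}}\mathbb{Z}_p)=0$.

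\emph{Step 2 (short exact sequences).} As $A$ and $A\otimes_{\mathbb{Z}}\mathbb{Z}_p$ are torsion free (being free over $\mathbb{Z}[1/q]$ and over $\mathbb{Z}_p$ respectively), multiplication by $p^k$ yields short exact sequences $0\to A\xrightarrow{p^k}A\to A/p^kA\to 0$ and $0\to A\otimes\mathbb{Z}_p\xrightarrow{p^k}A\otimes\mathbb{Z}_p\to(A\otimes\mathbb{Z}_p)/p^k\to 0$, and there is a natural isomorphism $(A\otimes\mathbb{Z}_p)/p^k(A\otimes\mathbb{Z}_p)\cong A/p^kA$ coming from $\mathbb{Z}_p\otimes_{\mathbb{Z}}\mathbb{Z}/p^k\cong\mathbb{Z}/p^k$. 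Feeding these into the long exact sequence of Tate cohomology and using Step~1 to see that the maps induced by multiplication by $p^k$ all vanish, the two long exact sequences break into short exact sequences
\begin{gather*}
0\to\hat{H}^i(G,A)\xrightarrow{\iota_i}\hat{H}^i(G,A/p^kA)\xrightarrow{\partial_i}\hat{H}^{i+1}(G,A)\to 0,\\
0\to\hat{H}^i(G,A\otimes\mathbb{Z}_p)\xrightarrow{\iota'_i}\hat{H}^i(G,A/p^kA)\xrightarrow{\partial'_i}\hat{H}^{i+1}(G,A\otimes\mathbb{Z}_p)\to 0,
\end{gather*}
for every $i\in\mathbb{Z}$.

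\emph{Step 3 (diagram chase).} The map $a\mapsto a\otimes 1$ is a morphism from the first coefficient sequence to the second which is the identity on the common quotient $A/p^kA$, so by naturality of the long exact sequence it induces a morphism between the two displayed short exact sequences whose outer vertical arrows are the natural maps $\phi_i\colon\hat{H}^i(G,A)\to\hat{H}^i(G,A\otimes\mathbb{Z}_p)$ of the lemma and whose middle arrow is the identity of $\hat{H}^i(G,A/p^kA)$. Commutativity of the left-hand square reads $\iota'_i\circ\phi_i=\iota_i$, and since $\iota_i$ is injective so is $\phi_i$, for every $i$; commutativity of the right-hand square reads $\phi_{i+1}\circ\partial_i=\partial'_i$, and since $\partial'_i$ is surjective so is $\phi_{i+1}$, for every $i$. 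Hence $\phi_i$ is an isomorphism for all $i$, which is the claim.

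There is no serious obstacle here. The substantive point is the torsion reduction of Step~1, where the hypothesis $|G|=p^kq$ with $(p,q)=1$ together with $\mathbb{Z}[1/q]$-linearity takes over the role played by ``$G$ is a $p$-group'' in Lemma~\ref{Zp}; after that the argument is pure diagram chasing, relying only on the standard fact that the long exact sequence of Tate cohomology is natural in short exact sequences of coefficient modules, and on additivity of the functor $\hat{H}^i(G,-)$, so that multiplication by $p^k$ on the coefficients induces multiplication by $p^k$ on cohomology. (One could instead deduce the lemma by extending Proposition~\ref{tensor} to arbitrary finite groups, since each term in Definition~\ref{Tate} commutes with the flat base change $\mathbb{Z}[1/q]\to\mathbb{Z}_p$, and then invoking Step~1; the route above keeps everything within the machinery already introduced.)
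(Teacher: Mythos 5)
Your proof is correct, but it takes a genuinely different route from the paper. The paper mirrors the proof of Lemma~\ref{Zp} in~\cite{BR}: it works with the short exact sequence $0\to A\to A\otimes\mathbb{Z}_p\to A\otimes\big(\mathbb{Z}_p/\mathbb{Z}[1/q]\big)\to 0$, shows by an explicit calculation that multiplication by $|G|=p^kq$ is an isomorphism on $\mathbb{Z}_p/\mathbb{Z}[1/q]$ (injectivity because any rational element of $\mathbb{Z}_p$ with denominator $p^kq^n$ must actually lie in $\mathbb{Z}[1/q]$; surjectivity via $p$-adic expansions), and then applies property (2) of Tate cohomology to conclude that the third term has vanishing cohomology, so the long exact sequence collapses to the desired isomorphism. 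You instead isolate the key point up front — that under these hypotheses $\hat{H}^i(G,A)$ and $\hat{H}^i(G,A\otimes\mathbb{Z}_p)$ are both $p^k$-torsion — then reduce both coefficient modules modulo $p^k$, identify the common quotient $A/p^kA$, and run a diagram chase through the two broken-up long exact sequences. Both are valid. The paper's argument is shorter once one has the vanishing of cohomology of the cokernel, but that vanishing requires a hands-on computation with $\mathbb{Z}_p/\mathbb{Z}[1/q]$; your argument replaces that computation with the more structural observation that $q$ acts invertibly and hence the cohomology is $p$-primary, after which everything is formal. Your remark at the end about deducing the lemma from an extension of Proposition~\ref{tensor} is also sound for the cyclic case actually used downstream, though as stated Proposition~\ref{tensor} only treats cyclic groups $\langle g\rangle$, so one would need to rephrase it for general finite $G$ before invoking it here.
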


\begin{proof}
We show that multiplication by $|G|$ is an isomorphism on $\mathbb{Z}_p/\mathbb{Z}[1/q]$. For any $p^l\frac{a}{b} \in \operatorname{Ker}(|G|)$, where $l$ is a non-negative integer and $p$, $a$, $b$ are pairwise coprime, we have $|G|\cdot p^l\frac{a}{b}=p^{l+k}\frac{aq}{b} \in \mathbb{Z}[1/q]$. It follows that $b$ is a product of divisors of $q$ and then $p^l\frac{a}{b} \in \mathbb{Z}[1/q]$. Hence, $|G|$ is injective. For any $\sum_{i=0}^\infty a_ip^i \in \mathbb{Z}_p/\mathbb{Z}$, where $a_i \in \{0,\dots ,p-1\}$,we have $\sum_{i=0}^\infty a_ip^i \equiv \sum_{i=k}^\infty a_ip^i$ $({\rm mod}\ \mathbb{Z})$. Hence, we have $|G|\cdot \frac{1}{q}\sum_{i=0}^\infty a_{k+i}p^i=\sum_{i=k}^\infty a_ip^i$ and then $|G|$ is surjective on~$\mathbb{Z}_p/\mathbb{Z}$. By the surjectivity of $\mathbb{Z}_p/\mathbb{Z}\rightarrow \mathbb{Z}_p/\mathbb{Z}[1/q]$, $|G|$ is surjective on~$\mathbb{Z}_p/\mathbb{Z}[1/q]$. By property~(2) of Tate cohomology, $\hat{H}^i(G,A\otimes \mathbb{Z}_p/\mathbb{Z}[1/q])=0$ for all $i\in \mathbb{Z}$. For the exact sequence
\[
0\rightarrow A\rightarrow A\otimes \mathbb{Z}_p\rightarrow A\otimes (\mathbb{Z}_p/\mathbb{Z}[1/q])\rightarrow 0,
\]
 taking the Tate cohomology, we have a long exact sequence because of property~(1) of Tate cohomology. It follows that $\hat{H}^i(G,A)\cong \hat{H}^i(G,A\otimes \mathbb{Z}_p)$ for any $i\in \mathbb{Z}$.
\end{proof}

\begin{Lemma}\label{lattice15A}
If $g$ is an element of odd order in $M_{24}$ viewed as a subgroup of $\operatorname{Aut}(\Lambda)$, then $\hat{H}^1(g,\Lambda)=0$ and $\hat{H}^1(g,V_\Lambda)=0$.
\end{Lemma}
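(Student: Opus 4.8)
The plan is to reprove Lemma~\ref{lattice}, i.e.\ \cite[Lemma~4.5 and Theorem~4.6]{BR}, essentially verbatim, the single change being that the descent to $\mathbb{Z}_p$-coefficients, which in \cite{BR} goes through Lemma~\ref{Zp} and hence requires $\langle g\rangle$ to be a $p$-group, is instead carried out with Lemma~\ref{Zpkai}. Write $N=|g|$, odd. By property~(2) of Tate cohomology, $\hat{H}^1(g,\Lambda)$ and $\hat{H}^1(g,V_\Lambda)$ are $N$-torsion, so each is the direct sum of its $\ell$-primary parts over the primes $\ell\mid N$; it is enough to kill the $p$-primary part for every prime $p\mid N$. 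Fix such a $p$ and put $q=N/p^{v_p(N)}$, so $(p,q)=1$ and $|\langle g\rangle|=p^{v_p(N)}q$.

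\emph{Extracting the $p$-part.} Both $\Lambda$ and $V_\Lambda$ are $\mathbb{Z}$-free ($V_\Lambda\cong\operatorname{Sym}_{\mathbb{Z}}(M)\otimes_{\mathbb{Z}}\mathbb{Z}\{\Lambda\}$), so tensoring them with $0\to\mathbb{Z}\to\mathbb{Z}[1/q]\to\mathbb{Z}[1/q]/\mathbb{Z}\to0$ produces short exact sequences of $\langle g\rangle$-modules whose final terms are $q$-primary torsion groups, and the Tate cohomology of a $q$-primary torsion module is again $q$-primary. Hence, after passing to $p$-primary parts, the long exact sequence of property~(1) identifies the $p$-primary part of $\hat{H}^1(g,\Lambda)$ with $\hat{H}^1(g,\Lambda\otimes\mathbb{Z}[1/q])$, and likewise for $V_\Lambda$; these latter groups are already $p$-primary because $q$ is a unit in $\mathbb{Z}[1/q]$. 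Applying Lemma~\ref{Zpkai} to the free $\mathbb{Z}[1/q]$-modules $\Lambda\otimes\mathbb{Z}[1/q]$ and $V_\Lambda\otimes\mathbb{Z}[1/q]$ and the cyclic group $\langle g\rangle$ of order $p^{v_p(N)}q$ gives $\hat{H}^1(g,\Lambda\otimes\mathbb{Z}[1/q])\cong\hat{H}^1(g,\Lambda\otimes\mathbb{Z}_p)$ and $\hat{H}^1(g,V_\Lambda\otimes\mathbb{Z}[1/q])\cong\hat{H}^1(g,V_\Lambda\otimes\mathbb{Z}_p)$. It therefore remains to show $\hat{H}^1(g,\Lambda\otimes\mathbb{Z}_p)=0$ and $\hat{H}^1(g,V_\Lambda\otimes\mathbb{Z}_p)=0$ for each odd prime $p\mid N$.

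\emph{Vanishing over $\mathbb{Z}_p$.} This step is \cite{BR}'s, and uses only that $p$ is odd. Since $2\in\mathbb{Z}_p^\times$ and the Leech lattice (in any of its standard models from the binary Golay code) is commensurable with the coordinate lattice $\mathbb{Z}^{24}$ with index a power of $2$, both being $M_{24}$-stable lattices in $\mathbb{Q}^{24}$ with $M_{24}$ permuting coordinates, one has $\Lambda\otimes\mathbb{Z}_p\cong\mathbb{Z}_p^{24}$ as $\mathbb{Z}_p[M_{24}]$-modules, hence $\Lambda\otimes\mathbb{Z}_p$ is a permutation module for $\langle g\rangle$. Choosing the $a_j$ to be a basis on which $g$ acts by permutation, the basis $\{{\rm e}^{-a_j}D^{(i)}({\rm e}^{a_j})\}$ of $M$ is permuted by $g$ up to signs, and the basis $\{\iota({\rm e}^\lambda)\}_{\lambda\in\Lambda}$ of $\mathbb{Z}_p\{\Lambda\}$ is permuted up to signs once $g$ is replaced by its lift to the automorphism of $\hat{\Lambda}$ of odd order $N$ supplied by Schur--Zassenhaus; as $N$ is odd these signs do no harm, since an odd-order signed-permutation matrix is conjugate over $\mathbb{Z}_p$, by a diagonal $\pm1$-matrix, to a genuine permutation matrix. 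Thus $M\otimes\mathbb{Z}_p$ and $\mathbb{Z}_p\{\Lambda\}$ are permutation $\langle g\rangle$-modules, whence so is $V_\Lambda\otimes\mathbb{Z}_p\cong\operatorname{Sym}_{\mathbb{Z}_p}(M\otimes\mathbb{Z}_p)\otimes_{\mathbb{Z}_p}\mathbb{Z}_p\{\Lambda\}$ by Lemma~\ref{closedpar}. By Lemma~\ref{par}, $\hat{H}^{-1}(g,\Lambda\otimes\mathbb{Z}_p)=\hat{H}^{-1}(g,V_\Lambda\otimes\mathbb{Z}_p)=0$, and by the $2$-periodicity of Tate cohomology of a cyclic group (property~(3)) these coincide with $\hat{H}^1(g,\Lambda\otimes\mathbb{Z}_p)$ and $\hat{H}^1(g,V_\Lambda\otimes\mathbb{Z}_p)$, as required.

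\emph{Where the difficulty lies.} The only genuinely new step, and the only place the composite order of $g$ is felt, is the descent of the second paragraph: Lemma~\ref{Zp} is unavailable, and one must work one prime $p\mid N$ at a time through the auxiliary ring $\mathbb{Z}[1/q]$ in order to fit the hypotheses of Lemma~\ref{Zpkai}. (Since $N$ is odd one could equally well avoid this here by working over $\mathbb{Z}[1/2]$ globally, as $\Lambda\otimes(\mathbb{Z}[1/2]/\mathbb{Z})$ is $2$-primary and $\mathbb{Z}[1/2]$ has no $N$-torsion, so that Lemma~\ref{par} applies directly; but the $\mathbb{Z}_p$-route is the one that will also cover the monster classes $15A$, $21A$ when Theorem~\ref{pA} is upgraded.) The permutation-module bookkeeping of the third paragraph is exactly as in \cite{BR}: the real care there is in tracking the signs coming from the central extension $\hat{\Lambda}$ and from the operators $D^{(i)}$, and it is precisely in clearing those that the hypothesis that $|g|$ is odd is used.
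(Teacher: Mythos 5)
Your proof is correct and follows the same strategy as the paper: vanish the $p$-primary part for each prime $p\mid|g|$ by descending to $\mathbb{Z}_p$ via Lemma~\ref{Zpkai}, and then invoke the permutation-module machinery (Lemmas~\ref{par} and~\ref{closedpar} plus the sign analysis from~\cite{BR}). The only differences are cosmetic: the paper identifies $\hat{H}^1(g,\Lambda)\otimes\mathbb{Z}[1/q]$ with $\hat{H}^1(g,\Lambda\otimes\mathbb{Z}[1/q])$ in one step via Proposition~\ref{tensor} rather than re-running the long exact sequence for the short exact sequence $0\to\mathbb{Z}\to\mathbb{Z}[1/q]\to\mathbb{Z}[1/q]/\mathbb{Z}\to 0$, and it decomposes $V_\Lambda\otimes\mathbb{Z}_p$ by $\Lambda$-degree into $\langle g\rangle$-orbits, each isomorphic to $\mathbb{Z}_p[\langle g\rangle/\operatorname{Stab}_{\langle g\rangle}(\lambda)]\otimes(V_{\Lambda,0}\otimes\mathbb{Z}_p)$, instead of your global tensor factorization $\operatorname{Sym}_{\mathbb{Z}_p}(M\otimes\mathbb{Z}_p)\otimes\mathbb{Z}_p\{\Lambda\}$, but both rest on the same underlying bookkeeping from~\cite{BR}.
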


\begin{proof}
Let $|g|=p^kq$, where $p$ is a prime and $(p,q)=1$. By Proposition~\ref{tensor} and Lemma~\ref{Zpkai}, we have $\hat{H}^1(g,\Lambda)\otimes \mathbb{Z}[1/q]\cong \hat{H}^1(g,\Lambda \otimes \mathbb{Z}[1/q])\cong \hat{H}^1\big(g,\Lambda \otimes_{\mathbb{Z}[1/q]}\mathbb{Z}_p\big)$. By the same argument as the proof of Lemma~4.5 of~\cite{BR}, $\hat{H}^1\big(g,\Lambda \otimes_{\mathbb{Z}[1/q]}\mathbb{Z}_p\big)=0$. Hence, $\hat{H}^1(g,\Lambda)=0$.

Similarly, we have $\hat{H}^1(g,V_\Lambda)\otimes \mathbb{Z}[1/q]\cong \hat{H}^1\big(g,V_\Lambda \otimes_{\mathbb{Z}[1/q]}\mathbb{Z}_p\big).$ We can decompose $V_\Lambda \otimes \mathbb{Z}_p$ into the following:
\[
V_\Lambda \otimes \mathbb{Z}_p=\bigoplus_{\operatorname{orbits} \langle g \rangle \lambda}\bigg( \bigoplus_{g^r\lambda \in \langle g \rangle \lambda} V_{\Lambda ,g^r\lambda}\otimes \mathbb{Z}_p \bigg),
\]
where $\bigoplus_{{{\rm orbits} \langle g \rangle \lambda}}$ means the direct sum over distinct $\operatorname{orbits} \langle g \rangle \lambda$,
\[
\bigoplus_{g^r\lambda \in \langle g \rangle \lambda}V_{\Lambda,g^r\lambda}\otimes \mathbb{Z}_p\cong \mathbb{Z}_p\lbrack \langle g\rangle /\operatorname{Stab}_{\langle g \rangle}(\lambda)\rbrack \otimes_{\mathbb{Z}_p} (V_{\Lambda,0}\otimes \mathbb{Z}_p)
\]
 as $\mathbb{Z}_p[g]$-modules and $\operatorname{Stab}_{\langle g \rangle}(\lambda)$ is the stabilizer subgroup of $\langle g \rangle$ with respect to $\lambda$.

By the proof of Theorem~4.6 of~\cite{BR}, $V_{\Lambda,0}\otimes \mathbb{Z}_p$ is a permutation module. By Lemma~\ref{closedpar}, $\bigoplus_{g^r\lambda \in \langle g \rangle \lambda} V_{\Lambda ,g^r\lambda}\otimes \mathbb{Z}_p$ is a permutation module. By Lemmas~\ref{closedpar} and~\ref{par}, $V_\Lambda \otimes \mathbb{Z}_p$ is a permutation module and we have $\hat{H}^1(g,V_\Lambda)=0$.
\end{proof}

\begin{Theorem}\label{15A21A}
Suppose that $g$ is an element of the monster of type $15A$ or $21A$. Then $\hat{H}^1(g,V)=0$.
\end{Theorem}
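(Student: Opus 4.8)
The plan is to reduce the statement to the prime-order cases already settled in Theorem~\ref{pA}, using the power maps of the classes $15A$ and $21A$ together with the transfer in Tate cohomology.

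First I would observe that, writing $N:=|g|$, for each prime $p$ dividing $N$ the Sylow $p$-subgroup of the cyclic group $\langle g\rangle$ is $\langle g^{N/p}\rangle$, of index $N/p$ prime to $p$. Since the composite $\operatorname{cor}\circ\operatorname{res}$ of corestriction and restriction on Tate cohomology is multiplication by the index~\cite[Chapter~IV]{CF}, the abelian group $\hat H^1(g,V)=\hat H^1(\langle g\rangle,V)$ is annihilated by $N/p$ as soon as $\hat H^1(g^{N/p},V)=0$. For $N=15$ the integers $N/p$ are $5$ and $3$, and for $N=21$ they are $7$ and $3$; in either case they are coprime, so it suffices to prove $\hat H^1(g^{N/p},V)=0$ for every prime $p\mid N$.

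Next I would identify these powers inside the monster. Reading off the power maps, one finds that for $g\in15A$ the element $g^{5}$ lies in class $3A$ and $g^{3}$ in class $5A$, while for $g\in21A$ the element $g^{7}$ lies in class $3A$ and $g^{3}$ in class $7A$. In every case $g^{N/p}$ has prime order and belongs to one of the classes $3A$, $5A$, $7A$ covered by Theorem~\ref{pA}, so $\hat H^1(g^{N/p},V)=0$; combined with the previous step this yields $\hat H^1(g,V)=0$.

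The point that needs the most care is the power-map bookkeeping: the argument collapses if some proper prime-order power of $g$ were to lie in a ``$B$''-type class, for which $\hat H^1$ need not vanish (for instance $5B$), so one should verify against the character table of the monster that $15A$ and $21A$ power only into the classes already handled in Theorem~\ref{pA}. A more self-contained route, parallel to the proof of Theorem~\ref{pA} itself, would instead use Proposition~\ref{tensor} and Lemma~\ref{Zpkai} to reduce to $\hat H^1(g,V\otimes\mathbb Z_p)$ for each $p\mid N$, decompose $V\otimes\mathbb Z_p$ as a $\mathbb Z_p[\langle g\rangle]$-module into pieces of $V_\Lambda\otimes\mathbb Z_p$ and of the twisted module $V_\Lambda^T\otimes\mathbb Z_p$, and finish via Lemma~\ref{lattice15A} together with the analogous permutation-module property of the twisted module; on that route the twisted module — not covered by the statement of Lemma~\ref{lattice15A} — is where the main work would lie.
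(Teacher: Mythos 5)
Your primary argument via restriction--corestriction is correct and takes a genuinely different, shorter route than the paper. The paper instead follows your second, ``self-contained'' route: it proves Lemma~\ref{lattice15A} (extending $\hat H^1(g,\Lambda)=0$ and $\hat H^1(g,V_\Lambda)=0$ to any element of odd order in $M_{24}$, via Proposition~\ref{tensor} and Lemma~\ref{Zpkai} and a permutation-module decomposition of $V_\Lambda\otimes\mathbb Z_p$), and then simply cites the proof of Theorem~4.7 of~[BR] to carry this vanishing over to $V$. Your transfer argument sidesteps the lattice computation entirely: given the power maps $g^5\in 3A$, $g^3\in 5A$ for $g\in 15A$ and $g^7\in 3A$, $g^3\in 7A$ for $g\in 21A$, the identity $\operatorname{cor}\circ\operatorname{res}=[\langle g\rangle:\langle g^{N/p}\rangle]$ together with $\gcd(3,5)=\gcd(3,7)=1$ forces $\hat H^1(g,V)=0$ directly from Theorem~\ref{pA}. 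What the transfer buys is brevity; what it gives up is scope, since it works only because every proper prime-power root of $15A$ and $21A$ happens to land in a class already settled by Theorem~\ref{pA}, whereas the paper's lattice-level argument is agnostic to which prime-order classes are already known to vanish and is stated for arbitrary odd composite orders in $M_{24}$. Your two caveats -- that the power-map check is the crux of the transfer route, and that the twisted module $V_\Lambda^T$ is not explicitly covered by Lemma~\ref{lattice15A} on the paper's route -- are both apt; the paper delegates the twisted-module step entirely to the proof of Theorem~4.7 in~[BR] by the phrase ``in the same way.''
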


\begin{proof}
In the same way as the proof of Theorem~4.7 of~\cite{BR}, this statement follows from Lemma~\ref{lattice15A}.
\end{proof}

Let $p$ be a prime factor of $N$ and $\zeta_N$ be a primitive $N$-th root of unity. Now, we consider a~ramified extension $R_p=\mathbb{Z}_p[\zeta_{N|h|}]$ of $\mathbb{Z}_p$.

We define rank 1 $R$-free $R_p[\langle g,h \rangle]$-modules $R_{n,m}$ with distinguished basis vectors $v_{n,m}\in R_{n,m}$, i.e.,
\[
R_{n,m}=\big\{cv_{n,m}\,|\,c\in R_p,\, gv_{n,m}=\zeta_N^nv_{n,m},\, hv_{n,m}=\zeta_{|h|}^mv_{n,m}\big\}.
\]
 Any $R_p[\langle g,h \rangle]$-module that is $R_p$-free of finite rank is an extension of rank~1 modules of the form~$R_{n,m}$ for $0\leq n\leq N-1$, $0\leq m\leq |h|-1$.
Hence, it suffices to consider the Tate cohomology of~$R_{n,m}$.

\begin{Lemma}\label{Tcoho}
For any $0\leq n \leq N-1$, $0\leq m \leq |h|-1$,
\begin{gather*}
\hat{H}^0(g,R_{n,m})= \begin{cases}
R_p/NR_p, & n=0, \\
0, & n\neq 0,
\end{cases}\\
\hat{H}^1(g,R_{n,m})= \begin{cases}
0, & n=0, \\
R_p/(1-\zeta_N^n)R_p, & n\neq 0,
\end{cases}
\end{gather*}
where $h$ acts as $\zeta_{|h|}^m$ on $R_p/NR_p$ and $R_p/\big(1-\zeta_N^n\big)R_p$.
\end{Lemma}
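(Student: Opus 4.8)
The plan is to exploit that $\langle g\rangle$ is cyclic of order $N$ and that $g$ acts on the rank-one module $R_{n,m}$ simply as multiplication by the scalar $\zeta_N^n\in R_p$ (note that $\zeta_N\in R_p=\mathbb{Z}_p[\zeta_{N|h|}]$ because $N\mid N|h|$), so that the whole computation reduces to arithmetic in the integral domain $R_p$. First I would invoke the periodicity $\hat{H}^1(g,R_{n,m})\cong\hat{H}^{-1}(g,R_{n,m})$ from property~(3) of Tate cohomology, so that both groups of interest can be read directly from Definition~\ref{Tate}: writing $A=R_pv_{n,m}$, one has $\hat{H}^0(g,A)=A^g/\operatorname{Im}({\rm Nr})$ and $\hat{H}^{-1}(g,A)=\operatorname{Ker}({\rm Nr})/\operatorname{Im}(g-1)$, where on $A$ the maps ${\rm Nr}=\sum_{i=0}^{N-1}g^i$ and $g-1$ are multiplication by $\sum_{i=0}^{N-1}\zeta_N^{ni}$ and by $\zeta_N^n-1$, respectively.

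Then I would split into the two cases. If $n=0$, then $g$ acts trivially, ${\rm Nr}$ is multiplication by $N$, and $g-1=0$; since $R_p$ has characteristic $0$ it is torsion-free, so $\operatorname{Ker}({\rm Nr})=0$, giving $\hat{H}^0(g,A)=R_p/NR_p$ and $\hat{H}^{-1}(g,A)=0$. If $n\neq 0$, then $\zeta_N^n\neq 1$ and, $R_p$ being a domain, multiplication by $\zeta_N^n-1$ is injective, so $A^g=\operatorname{Ker}(g-1)=0$ and $\hat{H}^0(g,A)=0$; moreover the identity $\big(\zeta_N^n-1\big)\sum_{i=0}^{N-1}\zeta_N^{ni}=\zeta_N^{nN}-1=0$ combined with that injectivity forces ${\rm Nr}=0$ on $A$, whence $\operatorname{Ker}({\rm Nr})=R_p$ and $\hat{H}^{-1}(g,A)=R_p/\big(\zeta_N^n-1\big)R_p=R_p/\big(1-\zeta_N^n\big)R_p$. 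This yields all four displayed formulas.

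For the residual action: since $h$ commutes with $g$ on $R_{n,m}$ and acts there as the scalar $\zeta_{|h|}^m$, it preserves each of $A^g$, $\operatorname{Ker}({\rm Nr})$, $\operatorname{Im}({\rm Nr})$ and $\operatorname{Im}(g-1)$, so it descends to multiplication by $\zeta_{|h|}^m$ on $R_p/NR_p$ and on $R_p/\big(1-\zeta_N^n\big)R_p$, as claimed. I do not expect a genuine obstacle: the only points needing care are confirming $\zeta_N\in R_p$ and that $R_p$ is an integral domain of characteristic $0$ (it is the ring of integers of a cyclotomic extension of $\mathbb{Q}_p$), which legitimizes every ``multiplication by a nonzero scalar is injective'' step; the rest is the textbook Tate cohomology of a cyclic group acting through a scalar.
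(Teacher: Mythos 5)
Your proposal is correct and proceeds exactly as the paper does: identify $g$ on $R_{n,m}$ as multiplication by $\zeta_N^n$, compute the invariants $R_{n,m}^g$ and the kernel/image of ${\rm Nr}=\sum_{i=0}^{N-1}g^i$, and read off $\hat{H}^0$ and $\hat{H}^1$ (the paper suppresses the explicit appeal to the periodicity $\hat{H}^1\cong\hat{H}^{-1}$ that you make, but this is the same computation). Your extra remarks that $\zeta_N\in R_p$ and that $R_p$ is a characteristic-zero domain are just the justifications the paper leaves implicit.
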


\begin{proof}
$R_{0,m}^g=R_{0,m}$, $R_{n,m}^g=0$, $n\neq 0$. Hence, $\hat{H}^0(g,R_{0,m})=R_p/NR_p$,
 $\hat{H}^0(g,R_{n,m})=0$, $n\neq 0$, because the norm map $R_{n,m} \rightarrow R_{n,m}$ is $v_{n,m} \mapsto \sum_{i=0}^{N-1}g^iv_{n,m}$.

By $\sum_{i=0}^{N-1}g^iv_{0,m}=Nv_{0,m}$, the kernel of the norm map on $R_{0,m}$ is~0. Hence, $\hat{H}^1(g,R_{0,m})=0$. By $\sum_{i=0}^{N-1}g^iv_{n,m}=\sum_{i=0}^{N-1}\zeta_N^{ni}v_{n,m}=0$, $n \neq 0$, the kernel of the norm map on~$R_{n,m}$ is~$R_{n,m}$. Therefore, by the definition of Tate cohomology, $\hat{H}^1(g,R_{n,m})=R_p/\big(1-\zeta_N^n\big)R_p$, $n\neq 0$.
\end{proof}

\section[p-Brauer character]{$\boldsymbol{p}$-Brauer character}\label{section5}

In this section, we will define a new notion of generalized Brauer character and prove properties of it. We will give a counterexample to a conjecture of Borcherds about vanishing of Tate cohomology.

Let $p$ be a prime number. Let $R$ be a discrete valuation ring of mixed characteristic $(0,p)$, $v\colon R\rightarrow \mathbb{Z}_{\geq 0}\cup \{ \infty \}$ be a~surjective valuation on $R$ and $K=\operatorname{Frac}(R)$.

We can't use the Brauer character for $R_p/(1-\zeta_N^n)R_p$ because this is not a vector space over a field. To solve this, we will define a $p$-Brauer character.

\begin{Definition}\label{def5.1}
Let $G$ be a finite group and $M$ be a finite length $R[G]$-module. Let $h$ be a $p$-regular element of $G$. Suppose that $M$ has a composition series $0=M_0\subsetneq M_1\subsetneq \dots \subsetneq M_n=M$ as an $R[h]$-module.
 We define the $p$-Brauer character of $M$ for $h$ by
\[
\widetilde{\operatorname{Tr}}_p(h|M)=\cfrac{1}{v(p)} \sum_{i=1}^{n}\operatorname{Tr} \big(h|\widetilde{M_i}\otimes K\big),
\]
 where $\widetilde{M_i}$ is an $R$-free $R[h]$-module satisfying $M_i/M_{i-1} \cong\widetilde{{M}_i}\otimes_{R[h]}R/\mathfrak{m}_R[h]$.
\end{Definition}

\begin{Lemma}
This definition does not depend on our choices of $M_i$ and $\widetilde{M}_i$.
\end{Lemma}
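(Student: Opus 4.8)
The plan is to show that $\widetilde{\operatorname{Tr}}_p(h|M)$ is independent of the two choices involved: first the choice of the $R$-free lifts $\widetilde{M_i}$ of the composition factors $M_i/M_{i-1}$, and second the choice of the composition series itself. For the first point, I would argue exactly as in the classical Brauer character theory underlying Definition~2.6: if $\widetilde{N}$ and $\widetilde{N}'$ are two $R$-free $R[h]$-modules whose reductions mod $\mathfrak{m}_R$ are isomorphic to a given simple (or fixed) $R/\mathfrak{m}_R[h]$-module $S$, then $\operatorname{Tr}(h|\widetilde{N}\otimes K) = \operatorname{Tr}(h|\widetilde{N}'\otimes K)$. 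Since $h$ is $p$-regular, its eigenvalues on $\widetilde{N}\otimes K$ are roots of unity of order prime to $p$, hence reduce injectively mod $\mathfrak{m}_R$; thus the multiset of eigenvalues of $h$ on $\widetilde{N}\otimes K$ is determined by its reduction, i.e., by the action of $h$ on $\widetilde{N}\otimes R/\mathfrak{m}_R \cong S$. This gives well-definedness of each summand $\operatorname{Tr}(h|\widetilde{M_i}\otimes K)$ once the composition factor $M_i/M_{i-1}$ is fixed. (The normalizing factor $1/v(p)$ is a constant and plays no role here.)

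For independence of the composition series, I would invoke a Jordan–Hölder argument, but phrased additively so that it survives the passage through the lifts. Define, for each finite-length $R/\mathfrak{m}_R[h]$-module $S$ that can occur as a composition factor, the quantity $\beta(S) := \operatorname{Tr}(h|\widetilde{S}\otimes K)$, which is well-defined by the previous paragraph. By the Jordan–Hölder theorem applied to $M$ as an $R[h]$-module, the multiset of composition factors $\{M_i/M_{i-1}\}_{i=1}^n$ is independent of the chosen series. Hence $\sum_{i=1}^n \operatorname{Tr}(h|\widetilde{M_i}\otimes K) = \sum_{\text{comp.\ factors } S} \beta(S)$ depends only on $M$, and dividing by $v(p)$ preserves this. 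Thus $\widetilde{\operatorname{Tr}}_p(h|M)$ is well-defined.

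The step I expect to be the main obstacle is the well-definedness of the individual lifts — specifically, justifying that an $R$-free $R[h]$-module $\widetilde{M_i}$ with prescribed reduction exists (so that the definition is non-vacuous) and that its character over $K$ is insensitive to the lift. Existence follows because $R/\mathfrak{m}_R[h]$-modules lift to $R[h]$ (one may take the projective cover over $R/\mathfrak{m}_R[h]$ and lift idempotents, $R$ being complete or at least a DVR with the relevant lifting property, or more concretely one decomposes $R[h]\cong \prod R[\zeta]/(\Phi_d)$ over the $p$-regular part and picks the obvious free module). The insensitivity is the classical fact recalled above: the Brauer character of a $p$-regular element only sees eigenvalues, and these are detected faithfully in characteristic $p$ because $p\nmid |h|$. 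Once these two facts are in hand, the Jordan–Hölder step is routine. I would therefore organize the proof as: (i) lifts exist; (ii) the character $\operatorname{Tr}(h|\widetilde{M_i}\otimes K)$ depends only on $M_i/M_{i-1}$; (iii) conclude by Jordan–Hölder.
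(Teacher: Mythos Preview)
Your proposal is correct and follows essentially the same approach as the paper: independence of the lift $\widetilde{M_i}$ is precisely the well-definedness of the classical Brauer character (Definition~2.6), and independence of the composition series is handled by Jordan--H\"older. The paper simply invokes these two facts without your additional commentary on existence of lifts or the eigenvalue mechanism, but the structure of the argument is identical.
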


\begin{proof}
For $M_i/M_{i-1} \cong\widetilde{{M}_i}\otimes_{R[h]}R/\mathfrak{m}_R[h]\cong\widetilde{{M}_i'}\otimes_{R[h]}R/\mathfrak{m}_R[h]$, we have $\operatorname{Tr}\big(h|\widetilde{M}_i\otimes K\big)=\widetilde{\operatorname{Tr}}(h|M_i/M_{i-1})=\operatorname{Tr}\big(h|\widetilde{M}_i'\otimes K\big)$. By the Jordan--H\"older theorem, the composition series of~$M$ is unique up to permutation and isomorphism. That is, for any two composition series $0=M_0\subsetneq M_1\subsetneq \dots \subsetneq M_n=M$, $0=M_0'\subsetneq M_1'\subsetneq \dots \subsetneq M_{n'}'=M$, we have $n=n'$ and some permutation $\sigma$ of $\{1,\dots,n\}$ s.t.\ $M_i/M_{i-1}\cong M_{\sigma(i)}'/M_{\sigma(i)-1}'$. Hence, $\sum_{i=1}^{n}\widetilde{\operatorname{Tr}}(h|M_i/M_{i-1})=\sum_{i=1}^{n}\widetilde{\operatorname{Tr}}(h|M_{\sigma(i)}'/M_{\sigma(i)-1}') =\sum_{i=1}^n\widetilde{\operatorname{Tr}}(h|M_i'/M_{i-1}')$. Therefore the $p$-Brauer character is well-defined.
\end{proof}

Note that it does depend on a choice of a prime number~$p$.

We can consider the notion of an irreducible $p$-Brauer character, so that $M$ is a simple $R[h]$-module that $M\otimes_{R[h]} R/\mathfrak{m}_R[h]$ is a simple $R/\mathfrak{m}_R[h]$-module. In particular, an irreducible $p$-Brauer is an irreducible Brauer character and a character table of $p$-Brauer characters is the same as of Brauer characters.

We will prove that the $p$-Brauer character is additive, where ``additive'' means that for any short exact sequence $0\rightarrow M\rightarrow M'\rightarrow M''\rightarrow 0$, $\widetilde{\operatorname{Tr}}_p$ satisfies $\widetilde{\operatorname{Tr}}_p(h|M')=\widetilde{\operatorname{Tr}}_p(h|M)+\widetilde{\operatorname{Tr}}_p(h|M'')$.

\begin{Lemma}\label{additive}
The $p$-Brauer character is additive on short exact sequences.
\end{Lemma}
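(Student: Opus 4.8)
The plan is to reduce the additivity of $\widetilde{\operatorname{Tr}}_p$ to the Jordan--Hölder uniqueness already used to prove well-definedness. Given a short exact sequence $0\rightarrow M\rightarrow M'\rightarrow M''\rightarrow 0$ of finite length $R[G]$-modules, I would first observe that, viewing everything as $R[h]$-modules, a composition series of $M$ and a composition series of $M''$ can be spliced together (pulling back the series of $M''$ along the surjection $M'\twoheadrightarrow M''$) to produce a composition series of $M'$ whose successive quotients are exactly the successive quotients of the chosen series for $M$ together with those for $M''$. This is the standard Schreier/Jordan--Hölder bookkeeping and requires no computation.

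The second step is to invoke the Jordan--Hölder theorem (exactly as in the preceding lemma) to say that the value $\widetilde{\operatorname{Tr}}_p(h|M')$ computed from this particular spliced series equals the value computed from any other composition series, since the previous lemma shows $\widetilde{\operatorname{Tr}}_p$ is independent of the choice of series and of the lifts $\widetilde{M_i}$. Concretely, writing $\widetilde{\operatorname{Tr}}(h|S) = \operatorname{Tr}(h|\widetilde{S}\otimes K)$ for a simple $R[h]$-module $S$ (the ordinary Brauer character of the simple constituent), one has
\[
\widetilde{\operatorname{Tr}}_p(h|M') = \frac{1}{v(p)}\sum_{S}[M':S]\,\widetilde{\operatorname{Tr}}(h|S),
\]
where the sum runs over isomorphism classes of simple $R[h]$-modules and $[M':S]$ is the multiplicity of $S$ in any composition series of $M'$. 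Since composition multiplicities are additive on short exact sequences, $[M':S] = [M:S] + [M'':S]$ for every $S$, and the claimed identity $\widetilde{\operatorname{Tr}}_p(h|M') = \widetilde{\operatorname{Tr}}_p(h|M) + \widetilde{\operatorname{Tr}}_p(h|M'')$ follows immediately by linearity.

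The only point requiring a little care — and the step I expect to be the main (mild) obstacle — is making sure the splicing is legitimate at the level of $R[h]$-modules, i.e.\ that restricting the $R[G]$-sequence to $R[h]$ still yields finite length modules with well-defined composition series, and that the lifts $\widetilde{M_i}$ entering Definition~\ref{def5.1} behave correctly: each successive quotient of the spliced series for $M'$ is literally one of the successive quotients coming from $M$ or from $M''$, so the same $R$-free lift may be reused, and no new lifting choices are forced. Once this identification of constituents is in place, the factor $1/v(p)$ is common to all three terms and the additivity is purely formal. I would therefore present the proof as: (i) splice the two composition series, (ii) quote the previous lemma for independence of the choice, (iii) conclude by additivity of composition multiplicities.
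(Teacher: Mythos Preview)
Your proposal is correct and follows essentially the same approach as the paper: both arguments splice a composition series of $M$ with the preimage under $M'\twoheadrightarrow M''$ of a composition series of $M''$ to obtain a composition series of $M'$, identify the successive quotients, and then sum. Your repackaging of the final step via multiplicities $[M':S]=[M:S]+[M'':S]$ is a cosmetic abstraction of the paper's direct summation, not a genuinely different route.
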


\begin{proof}
Let $0\rightarrow M\xrightarrow{\varphi} M'\xrightarrow{\psi} M''\rightarrow 0$ be an exact sequence of finite length $R[h]$-modules. Suppose that $M$ has length $m$ and a composition series $0=M_0\subsetneq M_1\subsetneq \dots \subsetneq M_m=M$. We use analogous notation for~$M'$ and~$M''$. Then the length~$m'$ of~$M'$ is $m+m''$ and
\[
0=\varphi(M_0)\subsetneq \varphi(M_1)\subsetneq \dots \subsetneq \varphi(M_m)=\psi^{-1}(M''_0)\subsetneq \psi^{-1}(M''_1)\subsetneq \dots \subsetneq \psi^{-1}(M''_{m''})=M'
\]
 is a composition series of $M'$. By injectivity of $\varphi$, $M_i\cong \varphi(M_i)$ for any $i \in \{0, \dots,m\}$ and by surjectivity of $\psi$, $\psi^{-1}(M''_j)/\operatorname{Ker}(\psi)\cong M''_j$ for any $j \in \{0, \dots,m''\}$. Hence, we have $\varphi(M_i)/\varphi(M_{i-1})\cong M_i/M_{i-1}$ and
\[
\psi^{-1}(M''_j)/\psi^{-1}(M''_{j-1})\cong \big(\psi^{-1}(M''_j)/\operatorname{Ker}(\psi)\big)/\big(\psi^{-1}(M''_{j-1})/\operatorname{Ker}(\psi)\big) \cong M''_j/M''_{j-1}.
\]
 Therefore,
\begin{align*}
\widetilde{\operatorname{Tr}}_p(h|M')&=\frac{1}{v(p)}\sum_{i=1}^m\widetilde{\operatorname{Tr}}\big(h|\varphi(M_i)/\varphi(M_{i-1})\big) +\frac{1}{v(p)}\sum_{j=1}^{m''}\widetilde{\operatorname{Tr}}\big(h|\psi^{-1}(M''_j)/\psi^{-1}(M''_{j-1})\big) \\
 &=\frac{1}{v(p)}\sum_{i=1}^m\widetilde{\operatorname{Tr}}(h|M_i/M_{i-1})+\frac{1}{v(p)}\sum_{j=1}^{m''}\widetilde{\operatorname{Tr}}(h|M''_j/M''_{j-1}) \\
 &=\widetilde{\operatorname{Tr}}_p(h|M)+\widetilde{\operatorname{Tr}}_p(h|M'').\tag*{\qed}
\end{align*}\renewcommand{\qed}{}
\end{proof}

\begin{Lemma}\label{tensorBch}
Let $S$ be the ring of integers of a finite extension of $\operatorname{Frac}(R)$ and $A$ be a finite length $R[h]$-module. Then $\widetilde{\operatorname{Tr}}_p(h|A\otimes_{R}S)=\widetilde{\operatorname{Tr}}_p(h|A)$.
\end{Lemma}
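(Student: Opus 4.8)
The plan is to reduce the claim to the additivity of $\widetilde{\operatorname{Tr}}_p$ established in Lemma~\ref{additive}, together with the behaviour of the ordinary Brauer character $\widetilde{\operatorname{Tr}}$ under the flat (indeed free) base change $R\hookrightarrow S$. First I would observe that $v(p)$ is computed using the valuation on $R$, and that since $S$ is a discrete valuation ring finite over $R$ with ramification index $e$, the normalized valuation $v_S$ on $S$ satisfies $v_S(p)=e\,v(p)$; this factor of $e$ is exactly what will be cancelled by the length change under $\otimes_R S$, so I must be careful to state in the formula for $\widetilde{\operatorname{Tr}}_p(h|A\otimes_R S)$ which valuation is being used. (If the convention is that $\widetilde{\operatorname{Tr}}_p$ is always computed with respect to the fixed ring $R$ and its valuation $v$, then the factor $1/v(p)$ is the same on both sides and there is no $e$ to track; I would adopt whichever reading matches Definition~\ref{def5.1} and say so explicitly.)

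Next, by additivity (Lemma~\ref{additive}) both sides are additive in $A$ on short exact sequences, so it suffices to prove the identity when $A$ is a simple $R[h]$-module, i.e.\ $A$ is annihilated by $\mathfrak{m}_R$ and is a simple $R/\mathfrak{m}_R[h]$-module. For such an $A$ one has $\widetilde{\operatorname{Tr}}_p(h|A)=\tfrac{1}{v(p)}\operatorname{Tr}\big(h|\widetilde{A}\otimes K\big)$ for a lift $\widetilde{A}$, i.e.\ $\widetilde{\operatorname{Tr}}_p(h|A)=\tfrac{1}{v(p)}\widetilde{\operatorname{Tr}}(h|A)$ in terms of the ordinary Brauer character of the $R/\mathfrak{m}_R[h]$-module $A$. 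Now $A\otimes_R S$ is annihilated by $\mathfrak{m}_R S=\mathfrak{m}_S^{\,e}$, and its composition factors as an $S[h]$-module are the Jordan--Hölder constituents of $A\otimes_{R/\mathfrak{m}_R}(S/\mathfrak{m}_S)$, each appearing with multiplicities whose total "length weight" accounts for the factor $e$. The key input is that for a $p$-regular $h$ the extension of scalars along $R/\mathfrak{m}_R\hookrightarrow S/\mathfrak{m}_S$ of residue fields does not change the ordinary Brauer character: $\widetilde{\operatorname{Tr}}(h|A\otimes_{R/\mathfrak{m}_R}S/\mathfrak{m}_S)=\widetilde{\operatorname{Tr}}(h|A)$, since Brauer characters are computed from eigenvalues of $h$ on a lift to characteristic $0$ and these are unchanged by a field extension. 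Assembling: $\widetilde{\operatorname{Tr}}_p(h|A\otimes_R S)=\tfrac{1}{v_S(p)}\cdot(\text{length correction})\cdot\widetilde{\operatorname{Tr}}(h|A)=\tfrac{1}{v(p)}\widetilde{\operatorname{Tr}}(h|A)=\widetilde{\operatorname{Tr}}_p(h|A)$, where the length correction is precisely $e=v_S(p)/v(p)$.

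The main obstacle I anticipate is bookkeeping rather than anything deep: one must correctly match the length of $A\otimes_R S$ as an $S$-module (or as an $R$-module) against the normalization $1/v(p)$, and verify that a simple $R/\mathfrak{m}_R[h]$-module $A$, when based-changed to $S$, contributes total trace $e\cdot\widetilde{\operatorname{Tr}}(h|A)$ to the unnormalized sum $\sum_i\operatorname{Tr}(h|\widetilde{(A\otimes S)_i}\otimes K_S)$. This rests on (i) $S$ being $R$-free (so $\otimes_R S$ is exact and preserves finite length), (ii) the ramification identity $\mathfrak{m}_R S=\mathfrak{m}_S^{\,e}$, and (iii) the invariance of ordinary Brauer characters under residue-field extension for $p$-regular elements. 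Once the simple case is checked, the general case follows immediately by Lemma~\ref{additive} and induction on the length of $A$.
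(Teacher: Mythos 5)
Your proposal is correct and follows essentially the same route as the paper: reduce to simple $A$ by additivity (Lemma~\ref{additive}), then cancel the ramification factor $e=v_S(p)/v_R(p)$ against the $S$-length $e$ of $A\otimes_R S$ coming from $\mathfrak{m}_RS=\mathfrak{m}_S^e$. The paper treats the unramified and ramified cases separately and exhibits the filtration $0\subsetneq \widetilde{A}\otimes S/\mathfrak{m}_S\subsetneq\cdots\subsetneq\widetilde{A}\otimes S/\mathfrak{m}_S^e$ explicitly, whereas you handle it uniformly and appeal to invariance of Brauer characters under residue-field extension; these are cosmetic differences, and your reading of which valuation normalizes $\widetilde{\operatorname{Tr}}_p(h|A\otimes_R S)$ (namely $v_S$) matches the paper's usage.
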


\begin{proof}
We have that the $p$-Brauer character of $A$ is described the sum of $p$-Brauer characters of composition factors of $A$ by Definition~\ref{def5.1}. Hence, it suffices to consider the case $A$ is a simple $R[h]$-module on which $h$ acts as $\zeta_{|h|}^m$. Now,
\[
\widetilde{\operatorname{Tr}}_p(h|A)=\frac{1}{v_R(p)}\operatorname{Tr} \big(h|\widetilde{A}\otimes \operatorname{Frac}(R)\big),
\]
where $v_R$ is a discrete valuation on~$R$, $\widetilde{A}$ is an $R$-free $R[h]$-module satisfying $A\cong \widetilde{A}\otimes_{R[h]} R/\mathfrak{m}_R[h]$ and $\mathfrak{m}_R$ is a maximal ideal of~$R$. We have
\[
A\otimes_R S\cong \widetilde{A}\otimes_{R[h]} R/\mathfrak{m}_R[h]\otimes_R S\cong \widetilde{A}\otimes_{R[h]} S/\mathfrak{m}_RS[h].
\]

 Let $v_S$ be a discrete valuation on $S$ and $\mathfrak{m}_S$ be a maximal ideal of~$S$. If $S$ is an unramified extension, then $\mathfrak{m}_S=\mathfrak{m}_RS$ and $v_S(p)=v_R(p)$. Hence,
 \begin{align*}
\widetilde{\operatorname{Tr}}_p(h|A\otimes_R S) &=\frac{1}{v_S(p)}\operatorname{Tr}\big(h|\widetilde{A}\otimes_R S\otimes \operatorname{Frac}(S)\big)\\
& =\frac{1}{v_R(p)}\operatorname{Tr}\big(h|\widetilde{A}\otimes \operatorname{Frac}(R)\big)=\widetilde{\operatorname{Tr}}_p(h|A).
 \end{align*}

 If $S$ is a ramified extension, then there is a positive integer $k>1$ s.t.\ $\mathfrak{m}_S^k=\mathfrak{m}_RS$ and $v_S(p)=kv_R(p)$. Hence, we have $A\otimes_R S=\widetilde{A}\otimes_{R[h]}S/\mathfrak{m}_S^k[h]$ and a composition series $0\subsetneq \widetilde{A}\otimes S/\mathfrak{m}_S\subsetneq \dots \subsetneq \widetilde{A}\otimes S/\mathfrak{m}_S^k$. Its composition factors are all isomorphic to $\widetilde{A}\otimes_{R[h]} S/\mathfrak{m}_S[h]$. Therefore,
\begin{align*}
\widetilde{\operatorname{Tr}}_p(h|A\otimes_R S) & =\frac{1}{v_S(p)}\sum_{i=1}^k\operatorname{Tr} \big(h|\widetilde{A}\otimes_R S\otimes \operatorname{Frac}(S)\big)\\
& =\frac{1}{kv_R(p)}k\operatorname{Tr}\big(h|\widetilde{A}\otimes \operatorname{Frac}(R)\big) =\widetilde{\operatorname{Tr}}_p(h|A).\tag*{\qed}
\end{align*}\renewcommand{\qed}{}
\end{proof}

By Proposition \ref{tensor} and Lemma~\ref{tensorBch}, any finitely generated $\mathbb{Z}$-free $\mathbb{Z}[\langle g,h \rangle]$-module base changes to an $R_p[\langle g,h \rangle]$-module that is $R_p$-free of finite rank and the $p$-Brauer character of the Tate cohomology is unchanged.
By Lemma \ref{additive}, it suffices to consider the $p$-Brauer character for the Tate cohomology of~$R_{n,m}$.

We will calculate the Tate cohomology of $R_{n,m}$ by using the $p$-Brauer character. To do this, we use the following lemma:

\begin{Lemma}[{\cite[Chapter III, Lemma 3]{CF}}] \label{pR}
 Let $q=p^k$ for a positive integer~$k$. Then $pR=(1-\zeta_q)^{\varphi(q)}R$, where $\varphi$ is Euler's totient function.
\end{Lemma}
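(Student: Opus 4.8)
The plan is to identify $p$ with the value at $1$ of the $q$-th cyclotomic polynomial and then absorb the spurious factors into a unit of $R$. First I would recall that over $\mathbb{Z}$ one has $\Phi_{p^k}(x)\big(x^{p^{k-1}}-1\big)=x^{p^k}-1$, so that $\Phi_{p^k}(x)=1+x^{p^{k-1}}+x^{2p^{k-1}}+\cdots+x^{(p-1)p^{k-1}}$; evaluating at $x=1$ gives $\Phi_{q}(1)=p$. On the other hand, with $\zeta_q$ a fixed primitive $q$-th root of unity in $R$, the primitive $q$-th roots of unity are the $\zeta_q^a$ with $a\in(\mathbb{Z}/q\mathbb{Z})^\times$, so $\Phi_q(x)=\prod_{a\in(\mathbb{Z}/q\mathbb{Z})^\times}(x-\zeta_q^a)$, and setting $x=1$ yields $p=\prod_{a\in(\mathbb{Z}/q\mathbb{Z})^\times}\big(1-\zeta_q^a\big)$, a product of exactly $\varphi(q)$ factors lying in $R$.

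Next I would check that for each $a$ coprime to $q$ the element $\big(1-\zeta_q^a\big)/(1-\zeta_q)$ is a unit of $R$: it equals $1+\zeta_q+\cdots+\zeta_q^{a-1}\in R$, and choosing $b$ with $ab\equiv 1\pmod q$ gives its inverse $(1-\zeta_q)/\big(1-\zeta_q^a\big)=\big(1-(\zeta_q^a)^b\big)/\big(1-\zeta_q^a\big)=1+\zeta_q^a+\cdots+\zeta_q^{a(b-1)}\in R$. Consequently $u:=\prod_{a\in(\mathbb{Z}/q\mathbb{Z})^\times}\big(1-\zeta_q^a\big)/(1-\zeta_q)$ is a unit of $R$, and the displayed product rearranges to $p=u\,(1-\zeta_q)^{\varphi(q)}$. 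Passing to principal ideals then gives $pR=(1-\zeta_q)^{\varphi(q)}R$, as claimed.

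I do not expect a genuine obstacle: this is the classical fact that $p$ is totally ramified in $\mathbb{Z}[\zeta_q]$ for a prime power $q=p^k$, and it is precisely \cite[Chapter~III, Lemma~3]{CF}. The only points demanding a little care are the implicit hypothesis that $\zeta_q\in R$ — which holds in the intended setting $R=R_p=\mathbb{Z}_p[\zeta_{N|h|}]$ whenever $p^k\mid N|h|$ — and the elementary verification that $\Phi_q(1)=p$, handled by the telescoping identity above; alternatively one may simply invoke \cite{CF}.
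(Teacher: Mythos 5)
Your proof is correct. Note that the paper does not give its own argument here — Lemma~\ref{pR} is cited directly from \cite[Chapter~III, Lemma~3]{CF} — and the argument you supply (evaluate $\Phi_q(x)=\prod_{a\in(\mathbb{Z}/q\mathbb{Z})^\times}(x-\zeta_q^a)$ at $x=1$ to get $p=\prod_a(1-\zeta_q^a)$, then observe that each ratio $(1-\zeta_q^a)/(1-\zeta_q)$ is a unit) is exactly the standard one appearing in that reference, so the two are in agreement.
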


For a positive integer $s$ and $R=\mathbb{Z}_{p}[\zeta_{p^s}]$, a discrete valuation $v(p)$ is given by $\varphi(p^s)$ and maximal ideal of~$R$ is $(1-\zeta_{p^s})R$.

\begin{Lemma}\label{Bch}
Suppose that the prime factorization of $N$ is $\prod p_i^{n_i}$.
Then,
\[
\widetilde{\operatorname{Tr}}_{p_i}(h|\hat{H}^*(g,R_{n,m}))= \begin{cases}
n_i\zeta_{|h|}^m, & n=0, \vspace{1mm}\\
\dfrac{-\zeta_{|h|}^m}{\varphi \big(p_i^l\big)}, & |g^n|=p_i^l, \ 1\leq l\leq n_i, \vspace{1mm}\\
0, & otherwise.
\end{cases}.
\]
\end{Lemma}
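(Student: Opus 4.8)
The plan is to compute $\widetilde{\operatorname{Tr}}_{p_i}(h|\hat H^*(g,R_{n,m}))$ directly from the explicit description of $\hat H^0$ and $\hat H^1$ in Lemma~\ref{Tcoho}, using additivity of the $p$-Brauer character (Lemma~\ref{additive}) and the invariance under ring extension (Lemma~\ref{tensorBch}) to reduce everything to counting composition factors of cyclic $R_p$-modules. Recall $\hat H^*(g,R_{n,m}) = \hat H^0(g,R_{n,m}) - \hat H^1(g,R_{n,m})$ as a virtual $\langle h\rangle$-module, so I treat the two cases $n=0$ and $n\neq 0$ separately.

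First, the case $n=0$. Here $\hat H^1(g,R_{0,m})=0$ and $\hat H^0(g,R_{0,m}) = R_p/NR_p$ with $h$ acting by $\zeta_{|h|}^m$. Writing $N = \prod_j p_j^{n_j}$, by CRT $R_p/NR_p \cong \prod_j R_p/p_j^{n_j}R_p$, and the summand for $j\neq i$ is a module on which $p_i$ acts invertibly, so its $p_i$-Brauer character contribution is handled by noting it is $R_p$-free after inverting $p_i$—actually more directly, I should compute $\widetilde{\operatorname{Tr}}_{p_i}(h|R_p/p_j^{n_j}R_p)$ and show it is $0$ for $j\neq i$ because such a module, viewed with $R = \mathbb{Z}_{p_i}[\zeta_{|h|N}]$ as the DVR of mixed characteristic $(0,p_i)$, has a composition series whose factors $\widetilde{M}$ lift to $R$-free modules with $\operatorname{Tr}(h|\widetilde M\otimes K)$ summing to a multiple of $v_R(p_i)$ that vanishes—more carefully: $R_p/p_j^{n_j}R_p$ is annihilated by a unit power, hence is zero, contributing $0$. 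For the $j=i$ summand $R_p/p_i^{n_i}R_p$: by Lemma~\ref{pR} and the remark after it, $p_i R = (1-\zeta_{p_i^s})^{\varphi(p_i^s)}R$ where $\mathfrak{m}_R = (1-\zeta_{p_i^s})R$, so $v(p_i) = \varphi(p_i^s)$ and $R_p/p_i^{n_i}R_p$ has length $n_i\varphi(p_i^s) = n_i v(p_i)$ as an $R$-module, each composition factor being the residue field with $h$ acting by $\zeta_{|h|}^m$, hence lifting to $R$ itself with $\operatorname{Tr}(h|R\otimes K) = \zeta_{|h|}^m$; therefore $\widetilde{\operatorname{Tr}}_{p_i}(h|R_p/p_i^{n_i}R_p) = \frac{1}{v(p_i)}\cdot n_i v(p_i)\cdot \zeta_{|h|}^m = n_i\zeta_{|h|}^m$, as claimed.

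Next, the case $n\neq 0$. Here $\hat H^0 = 0$ and $\hat H^1(g,R_{n,m}) = R_p/(1-\zeta_N^n)R_p$ with $h$ acting by $\zeta_{|h|}^m$, so $\widetilde{\operatorname{Tr}}_{p_i}(h|\hat H^*(g,R_{n,m})) = -\widetilde{\operatorname{Tr}}_{p_i}(h|R_p/(1-\zeta_N^n)R_p)$. Write the order of $g^n$ as $M = N/\gcd(N,n)$, so $\zeta_N^n$ is a primitive $M$-th root of unity and $(1-\zeta_N^n)$ generates the same ideal of $\mathbb{Z}[\zeta_M]$ as $1-\zeta_M$. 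The key arithmetic input: the ideal $(1-\zeta_M)$ in $\mathbb{Z}[\zeta_M]$ is the unit ideal unless $M$ is a prime power $p^l$, in which case it is the unique prime above $p$. Base-changing to $R_p = \mathbb{Z}_p[\zeta_{N|h|}]$ (this $\mathbb{Z}_p$ being $\mathbb{Z}_{p_i}$ for the index $i$ under consideration), I get: if $M$ is not a power of $p_i$, then $(1-\zeta_N^n)$ is a unit in $R_{p_i}$—either $M$ is not a prime power at all, or it is a power of a different prime $p_j$ which is invertible in $\mathbb{Z}_{p_i}$—so $R_{p_i}/(1-\zeta_N^n)R_{p_i} = 0$ and the $p_i$-Brauer character is $0$. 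If $M = p_i^l$ with $1\le l\le n_i$, then I must compute the length of $R_{p_i}/(1-\zeta_{p_i^l})R_{p_i}$ as an $R$-module for $R = \mathbb{Z}_{p_i}[\zeta_{N|h|}]$. By Lemma~\ref{pR} applied in the tower, $\mathfrak{m}_R = (1-\zeta_{p_i^s})R$ for the full $p_i$-part $p_i^s$ of $N|h|$, and $v(p_i) = \varphi(p_i^s)$; meanwhile $(1-\zeta_{p_i^l})$ has $\mathfrak{m}_R$-valuation $v(1-\zeta_{p_i^l}) = \varphi(p_i^s)/\varphi(p_i^l)$ (since $p_i = \prod$ of conjugates gives $v(p_i) = \varphi(p_i^l)\cdot v(1-\zeta_{p_i^l})$ by the norm/ramification computation). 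Hence $R/(1-\zeta_{p_i^l})R$ has length $\varphi(p_i^s)/\varphi(p_i^l) = v(p_i)/\varphi(p_i^l)$ with each factor the residue field, $h$ acting by $\zeta_{|h|}^m$, so $\widetilde{\operatorname{Tr}}_{p_i}(h|R_{p_i}/(1-\zeta_N^n)R_{p_i}) = \frac{1}{v(p_i)}\cdot\frac{v(p_i)}{\varphi(p_i^l)}\cdot\zeta_{|h|}^m = \frac{\zeta_{|h|}^m}{\varphi(p_i^l)}$, and the $\hat H^*$ value is its negative, matching the statement.

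The main obstacle is the ramification bookkeeping: I need to verify carefully that for $R = \mathbb{Z}_{p_i}[\zeta_{N|h|}]$ the maximal ideal is $(1-\zeta_{p_i^s})R$ where $p_i^s \| N|h|$, and that the element $1-\zeta_{p_i^l}$ has $R$-valuation exactly $\varphi(p_i^s)/\varphi(p_i^l)$. This follows from Lemma~\ref{pR} together with the fact that adjoining the prime-to-$p_i$ roots of unity is unramified at $p_i$ (so does not change valuations of $p_i$-power cyclotomic elements), but the argument must be stated cleanly; everything else is the Jordan--Hölder counting already justified by Lemma~\ref{additive} and the invariance under extension of scalars in Lemma~\ref{tensorBch}.
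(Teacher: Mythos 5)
Your proposal is correct and follows essentially the same route as the paper: reduce via Lemma~\ref{Tcoho} to the cyclic quotients $R_p/NR_p$ and $R_p/(1-\zeta_N^n)R_p$, count composition factors using Lemma~\ref{pR} and the ramification $v(p_i)=\varphi(p_i^{n_i})$, and show the quotient vanishes when $|g^n|$ is not a power of $p_i$. The only cosmetic difference is in that last step, where you invoke the classical fact that $1-\zeta_M$ is a unit in $\mathbb{Z}[\zeta_M]$ unless $M$ is a prime power (and that $p_j$, $j\neq i$, is invertible in $\mathbb{Z}_{p_i}$), whereas the paper argues directly via the reduction map and Frobenius; both are valid and equally standard.
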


\begin{proof}Now, $h$ acts as $\zeta_{|h|}^m$ on $\hat{H}^*(g,R_{n,m})$ and $v(p)=\varphi\big(p_i^{n_i}\big)$. By \[
\widetilde{\operatorname{Tr}}_{p_i}\big(h|\hat{H}^*(g,R_{n,m})\big)=\widetilde{\operatorname{Tr}}_{p_i}\big(h|\hat{H}^0(g,R_{n,m})\big) -\widetilde{\operatorname{Tr}}_{p_i}\big(h|\hat{H}^1(g,R_{n,m})\big)
\]
 and Lemma~\ref{Tcoho}, it suffices to calculate the $p_i$-Brauer characters of $R_{p_i}/NR_{p_i}$ and $R_{p_i}/(1-\zeta_N^n)R_{p_i}$.

 When $n=0$, by Lemmas~\ref{Tcoho} and~\ref{pR},
\[
 R_{p_i}/NR_{p_i}\cong R_{p_i}/p_i^{n_i}R_{p_i}\cong R_{p_i}/\big(1-\zeta_{p_i^{n_i}}\big)^{n_i\varphi(p_i^{n_i})}R_{p_i}.
\]
This module has a composition series
\[
0\subsetneq R_{p_i}/(1-\zeta_{{p_i}^{n_i}})R_{p_i}\subsetneq \dots \subsetneq R_{p_i}/\big(1-\zeta_{{p_i}^{n_i}}\big)^{n_i\varphi(p_i^{n_i})}R_{p_i}
\]
 and its composition factors are all isomorphic to $R_{p_i}/\big(1-\zeta_{{p_i}^{n_i}}\big)R_{p_i}$. Hence,
\[
 \widetilde{\operatorname{Tr}}_{p_i}(h|R_{p_i}/NR_{p_i})=n_i\zeta_{|h|}^m.
\]

 When $|g^n|=p_i^l$, $\zeta_N^n$ is a $p_i^l$-th primitive root of unity. By Lemma~\ref{pR}, we have
\[
 \big(1-\zeta_{p_i^l}\big)^{\varphi(p_i^l)}=\big(1-\zeta_{p_i^{n_i}}\big)^{\varphi(p_i^{n_i})}.
\]
 Hence, by Lemma~\ref{Tcoho}, we have
\[
R_{p_i}/\big(1-\zeta_{{p_i}^l}\big)R_{p_i}\cong R_{p_i}/\big(1-\zeta_{{p_i}^{n_i}}\big)^{\varphi(p_i^{n_i})/\varphi(p_i^l)}R_{p_i}.
\]
 This module has the length $\varphi\big(p_i^{n_i}\big)/\varphi\big(p_i^l\big)$ and its composition factors are all isomorphic to $R_{p_i}/\big(1-\zeta_{{p_i}^{n_i}}\big)R_{p_i}$. Calculating the $p_i$-Brauer character, we have
\[
 \widetilde{\operatorname{Tr}}_{p_i}\big(h|R_{p_i}/\big(1-\zeta_{p_i^l}\big)R_{p_i}\big)=1/\varphi\big(p_i^l\big).
\]

 When $|g^n|=p_i^kq$, where $q\neq 1$, $(q,p_i)=1$ and $0\leq k\leq n_i$, $\zeta_N^n$ is a $p_i^kq$-th primitive root of unity. Let $\pi$ be a surjective homomorphism $R_{p_i}\rightarrow \mathbb{F}_{p_i}\big[\zeta_{N|h|/p_i^{n_i}}\big]$; $\sum_{j=0}^\infty c_jp_i^j \mapsto c_0$, $\zeta_{N|h|} \mapsto \zeta_{N|h|/p_i^{n_i}}$. Then $\operatorname{Ker}(\pi)$ is the maximal ideal of $R_{p_i}$ and $\operatorname{Frob}_{p_i}$ on $\mathbb{F}_{p_i}[\zeta_{N|h|/p_i^{n_i}}]$ is an automorphism. We have $\pi\big(\zeta_{p_i^kq}\big)=\operatorname{Frob}_{p_i}^{-k}\big(\zeta_{p_i^kq}^{p_i^k}\big)\neq 1$.
 Hence, $1-\zeta_N^n \notin \operatorname{Ker}(\pi)$. $1-\zeta_N^n$ is a unit of $R_{p_i}$ because every element of a local ring not included in a maximal ideal is a unit. Therefore, $R_p/\big(1-\zeta_N^n\big)R_p=0$.
\end{proof}

We generalize Proposition~\ref{prop2.2} \cite[Proposition~2.2]{BR} using the $p$-Brauer character.

\begin{Theorem} \label{2.2kai}
Let $G$ be a finite group. Suppose that $g \in \operatorname{Cent}(G)$ has order $N$ and that $h \in G$ is an $N$-regular element. Let~$A$ be a finitely generated $R$-free $R[\langle g,h \rangle]$-module. Then $\hat{H}^*(g,A)=\hat{H}^0(g,A)-\hat{H}^1(g,A)$ is a virtual representation of $\langle h \rangle$, and
\begin{gather*}
\widetilde{\operatorname{Tr}}_{p_i}(h|\hat{H}^*(g,A))=\sum_{k=1}^{N-1}a_{k,p_i}\operatorname{Tr}\big(g^kh|A\big) ,
\\
a_{k,p_i}=\begin{cases}
\dfrac{1}{\sum_{(p_i,d)=1,d|N}\varphi (N/d)}\left(n_i-l-\dfrac{n_i-l-1}{p_i}\right), & \big(k,p_i^{n_i}\big)=p_i^l, \ 0\leq l \leq n_i-1, \\
0, & \big(k,p_i^{n_i}\big)=p_i^{n_i},
\end{cases}
\end{gather*}
where the prime factorization of $N$ is $\prod p_i^{n_i}$.
\end{Theorem}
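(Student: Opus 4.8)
The plan is to reduce the statement to the rank-one modules $R_{n,m}$, where the left side is computed by Lemma~\ref{Bch} and the right side collapses to a single scalar, and then to verify one identity among roots of unity. First I would make precise what ``virtual representation of $\langle h\rangle$'' means: since $g$ has order $N$, property~(2) of Tate cohomology shows each $\hat{H}^i(g,A)$ is annihilated by $N$, and because $p_i\mid N$ the ring $R/NR$ is finite, so $\hat{H}^i(g,A)$ --- a subquotient of the finitely generated $R$-module $A$ --- has finite length as an $R[h]$-module. Hence $\hat{H}^*(g,A)=[\hat{H}^0(g,A)]-[\hat{H}^1(g,A)]$ is a well-defined element of the Grothendieck group of finite-length $R[h]$-modules, on which $\widetilde{\operatorname{Tr}}_{p_i}(h|{-})$ is defined and, by Lemma~\ref{additive}, additive. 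By Proposition~\ref{tensor} (Tate cohomology commutes with flat base change), Lemma~\ref{tensorBch} ($p_i$-Brauer characters are unchanged under base change to the ring of integers of a finite extension), and the evident base-change invariance of ordinary traces, both sides of the claimed formula are unaffected by base change; so, enlarging the ground ring as needed, I may assume $R=R_{p_i}=\mathbb{Z}_{p_i}[\zeta_{N|h|}]$ and $A$ free of finite rank over $R_{p_i}$. Such an $A$ admits a filtration by $R_{p_i}[\langle g,h\rangle]$-submodules whose successive quotients are the rank-one modules $R_{n,m}$, $0\le n\le N-1$, $0\le m\le|h|-1$.

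Next comes d\'evissage along this filtration. For a short exact sequence $0\to A'\to A\to A''\to 0$ of $R_{p_i}$-free $R_{p_i}[\langle g,h\rangle]$-modules, property~(1) of Tate cohomology together with the periodicity $\hat{H}^{i+2}\cong\hat{H}^i$ (property~(3)) produces a six-term exact hexagon of finite-length $R[h]$-modules; cutting it into short exact sequences and using additivity of $\widetilde{\operatorname{Tr}}_{p_i}$ (Lemma~\ref{additive}) forces the alternating sum of the six $p_i$-Brauer characters to vanish, which gives
\[
\widetilde{\operatorname{Tr}}_{p_i}\big(h|\hat{H}^*(g,A)\big)=\widetilde{\operatorname{Tr}}_{p_i}\big(h|\hat{H}^*(g,A')\big)+\widetilde{\operatorname{Tr}}_{p_i}\big(h|\hat{H}^*(g,A'')\big).
\]
Since $\sum_{k=1}^{N-1}a_{k,p_i}\operatorname{Tr}(g^kh|A)$ is visibly additive in $A$, it now suffices to prove the formula for $A=R_{n,m}$.

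For $A=R_{n,m}$ the element $g^kh$ acts on the rank-one module by the scalar $\zeta_N^{nk}\zeta_{|h|}^m$, so the right side equals $\zeta_{|h|}^m\sum_{k=1}^{N-1}a_{k,p_i}\zeta_N^{nk}$, while the left side is supplied by Lemma~\ref{Bch}. Thus everything reduces to the numerical identity
\[
\sum_{k=1}^{N-1}a_{k,p_i}\,\zeta_N^{nk}=\chi(n):=\begin{cases}
n_i,& N\mid n,\\
-1/\varphi\big(p_i^l\big),& |g^n|=p_i^l,\ 1\le l\le n_i,\\
0,& \text{otherwise}.
\end{cases}
\]
I would prove this by discrete Fourier inversion on $\mathbb{Z}/N\mathbb{Z}$ (setting $a_{0,p_i}=0$, consistent with the displayed formula for $a_{k,p_i}$): the identity is equivalent to $a_{k,p_i}=\tfrac1N\sum_{n=0}^{N-1}\chi(n)\zeta_N^{-nk}$, and since $\{n:|g^n|=p_i^l\}$ is exactly the set of $n$ for which $\zeta_N^n$ is a primitive $p_i^l$-th root of unity, the sum $\sum_{n:|g^n|=p_i^l}\zeta_N^{-nk}$ is the prime-power Ramanujan sum $c_{p_i^l}(k)$, equal to $\varphi(p_i^l)$, $-p_i^{l-1}$, or $0$ according as $v_{p_i}(k)\ge l$, $=l-1$, or $<l-1$. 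Summing over $l$ against the weights $-1/\varphi(p_i^l)$, using $\sum_{d\mid N,\ (p_i,d)=1}\varphi(N/d)=\varphi(p_i^{n_i})\cdot N/p_i^{n_i}$ to rewrite the denominator as $\tfrac{p_i}{(p_i-1)N}$, and then simplifying via the elementary identity $\tfrac{p_i}{p_i-1}\big(n_i-l-\tfrac{n_i-l-1}{p_i}\big)=(n_i-l)+\tfrac1{p_i-1}$, recovers exactly the stated value of $a_{k,p_i}$ --- together with $a_{k,p_i}=0$ when $p_i^{n_i}\mid k$.

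The homological ingredients (finiteness of Tate cohomology, the exact hexagon, additivity) are routine given what is already in the excerpt, and the reduction to $R_{p_i}$ is a formal consequence of Proposition~\ref{tensor} and Lemma~\ref{tensorBch}. I expect the genuine obstacle to be the base-case Fourier computation: tracking the $p_i$-adic valuations of $k$ and $n$, evaluating the prime-power Ramanujan sums correctly, and matching the outcome to the rather opaque closed form of $a_{k,p_i}$.
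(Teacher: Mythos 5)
Your proposal is correct and follows the same route as the paper: base-change to $R_{p_i}=\mathbb{Z}_{p_i}[\zeta_{N|h|}]$ via Proposition~\ref{tensor} and Lemma~\ref{tensorBch}, reduce by dévissage to the rank-one modules $R_{n,m}$, and recover the coefficients $a_{k,p_i}$ by inverse discrete Fourier transform against the values from Lemma~\ref{Bch}. You spell out a few points the paper leaves implicit (the finite-length/virtual-representation justification, additivity of $\widetilde{\operatorname{Tr}}_{p_i}\circ\hat{H}^*$ via the six-term hexagon, and packaging the inner sums as prime-power Ramanujan sums $c_{p_i^l}(k)$ rather than citing $\mu(p_i^{s-l})$ directly), but these are elaborations of the paper's argument, not a different one, and your final arithmetic simplification agrees with the stated closed form.
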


\begin{proof}
Suppose
\[
\widetilde{\operatorname{Tr}}_{p_i}\big(h|\hat{H}^*(g,A)\big)=\sum_{k=1}^{N-1}a_{k,p_i}\operatorname{Tr}\big(g^kh|A\big). \]
It suffices to take $A=R_{n,m}$.
By an inverse discrete Fourier transform and Lemma~\ref{Bch}, we have
\begin{align*}
a_{k,p_i}&=\frac{1}{N}\sum_{b=0}^{N-1}\widetilde{\operatorname{Tr}}_{p_i}\big(h|\hat{H}^*(g,R_{b,m})\big)\overline{\operatorname{Tr}\big(g^kh|R_{b,m}\big)} \\
 &=\frac{1}{N}\Bigg(n_i+\sum_{|g^b|=p_i} \frac{-1}{\varphi (p_i)}\zeta_N^{-kb}+\sum_{ |g^b|=p_i^2} \frac{-1}{\varphi \big(p_i^2\big)}\zeta_N^{-kb}+\dots+\sum_{|g^b|=p_i^{n_i}} \frac{-1}{\varphi \big(p_i^{n_i}\big)}\zeta_N^{-kb}\Bigg).
\end{align*}

Let $\big(k,p_i^{n_i}\big)=p_i^l$. If $s>l$, then $\sum_{|g^b|=p_i^s}\zeta_N^{-kb}$ is a sum of primitive $p_i^{s-l}$-th roots of unity and equals an integer multiple of the M\"obius function $\mu\big(p_i^{s-l}\big)$. If $s\leq l$, then we have
\[
\sum_{|g^b|=p_i^s}\zeta_N^{-kb}=\varphi(p_i^s).
\]
Hence, when $\big(k,p_i^{n_i}\big)=p_i^{n_i}$, we have 
\[ a_{k,p_i}=\frac{1}{N}(n_i-1-\dots-1)=0.\]
When $\big(k,p_i^{n_i}\big)=p_i^{l}$, because $\big(N/p_i^{n_i}\big)=\sum_{d|(N/p_i^{n_i})}\varphi\big(N/p_i^{n_i}d\big)$, we have
\begin{align*}
a_{k,p_i}&=\frac{1}{N}\Bigg(n_i-1-\dots-1+\frac{p_i^l}{\varphi \big(p_i^{l+1}\big)}\Bigg)\\
 &=\frac{(n_i-l)\varphi \big(p_i^{l+1}\big)+p_i^l}{N\varphi \big(p_i^{l+1}\big)} =\frac{p_i^{l+1}(n_i-l)-p_i^l(n_i-l-1)}{p_i^{l+1}\varphi\big(p_i^{n_i}\big)\big(N/p_i^{n_i}\big)}\\
 &=\frac{1}{\sum_{(p_i,d)=1,d|N}\varphi (N/d)}\bigg(n_i-l-\frac{n_i-l-1}{p_i}\bigg).\tag*{\qed}
\end{align*}\renewcommand{\qed}{}
\end{proof}

\begin{Corollary} \label{newHaup}
Suppose that $g$ is an element of $\mathbb{M}$ of order $N=\prod p_i^{n_i}$. Let $V$ be an integral form of the monster vertex algebra. If $h\in C_{\mathbb{M}}(g)$ is $N$-regular, then $\sum_{n\in \mathbb{Z}}\widetilde{\operatorname{Tr}}_{p_i}\big(h|\hat{H}^*(g,V_n\otimes \mathbb{Z}_{p_i})\big)q^{n-1}$ is a linear combination of Hauptmoduls and is denoted by $\sum_{d|N, d\neq N}a_{d,p_i}\varphi (N/d)T_{g^dh}(\tau).$
\end{Corollary}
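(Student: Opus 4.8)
The plan is to derive Corollary~\ref{newHaup} as a direct consequence of Theorem~\ref{2.2kai} applied to the graded pieces $V_n$ of the monster vertex algebra, together with the moonshine identity of Conway and Norton. First I would fix an element $g\in\mathbb{M}$ of order $N=\prod p_i^{n_i}$ and an $N$-regular element $h\in C_{\mathbb{M}}(g)$; then $g$ and $h$ commute, so for each graded piece $V_n$ we get a finitely generated $\mathbb{Z}$-free $\mathbb{Z}[\langle g,h\rangle]$-module, and by Proposition~\ref{tensor} and Lemma~\ref{tensorBch} its base change $V_n\otimes\mathbb{Z}_{p_i}$ is $R_{p_i}$-free of finite rank with $g$ lying in the center of $\langle g,h\rangle$. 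Strictly speaking one should also pass to $R_{p_i}=\mathbb{Z}_{p_i}[\zeta_{N|h|}]$ so that $g$ and $h$ act diagonalizably, which is exactly the setup of Theorem~\ref{2.2kai}; the $p_i$-Brauer character is unaffected by this by Lemma~\ref{tensorBch}.

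Next I would apply Theorem~\ref{2.2kai} termwise: for each $n$,
\[
\widetilde{\operatorname{Tr}}_{p_i}\big(h|\hat{H}^*(g,V_n\otimes\mathbb{Z}_{p_i})\big)=\sum_{k=1}^{N-1}a_{k,p_i}\operatorname{Tr}\big(g^kh|V_n\big),
\]
with the coefficients $a_{k,p_i}$ as computed there. Multiplying by $q^{n-1}$ and summing over $n\in\mathbb{Z}$, and using that $\sum_{n}\operatorname{Tr}(g^kh|V_n)q^{n-1}=T_{g^kh}(\tau)$ is the McKay--Thompson series, I get
\[
\sum_{n\in\mathbb{Z}}\widetilde{\operatorname{Tr}}_{p_i}\big(h|\hat{H}^*(g,V_n\otimes\mathbb{Z}_{p_i})\big)q^{n-1}=\sum_{k=1}^{N-1}a_{k,p_i}T_{g^kh}(\tau).
\]
Since each $T_{g^kh}$ is a Hauptmodul, this is a linear combination of Hauptmoduls, giving the first assertion.

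To obtain the stated closed form $\sum_{d|N,\,d\neq N}a_{d,p_i}\varphi(N/d)T_{g^dh}(\tau)$, I would reorganize the sum over $k\in\{1,\dots,N-1\}$ by grouping according to $d=\gcd(k,N)$, equivalently according to the cyclic subgroup $\langle g^k\rangle$ generated by $g^k$. The coefficient $a_{k,p_i}$ in Theorem~\ref{2.2kai} depends only on $(k,p_i^{n_i})=p_i^l$, hence only on the $p_i$-part of $d=\gcd(k,N)$; writing $d=p_i^l d'$ with $(d',p_i)=1$ one checks $a_{k,p_i}=a_{d,p_i}$ for all $k$ with $\gcd(k,N)=d$. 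Moreover $g^k$ and $g^d$ generate the same subgroup whenever $\gcd(k,N)=d$, and for such $k$ the element $g^k$ is a generator of $\langle g^d\rangle$; there are exactly $\varphi(N/d)$ such $k$ in $\{1,\dots,N-1\}$. The only subtlety is that $T_{g^kh}$ need not literally equal $T_{g^dh}$ for all generators $g^k$ of $\langle g^d\rangle$ — but a standard moonshine fact (rationality / Galois-invariance of the McKay--Thompson series, i.e.\ $T_{g^kh}=T_{g^dh}$ whenever $\langle g^kh\rangle=\langle g^dh\rangle$, which holds here since $h$ is $N$-regular so $|g^kh|$ behaves compatibly) lets me replace each $T_{g^kh}$ by $T_{g^dh}$. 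Collecting the $\varphi(N/d)$ equal terms for each proper divisor $d\mid N$ then yields $\sum_{d\mid N,\,d\neq N}a_{d,p_i}\varphi(N/d)T_{g^dh}(\tau)$.

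I expect the main obstacle to be the bookkeeping in this last reorganization: carefully justifying that $a_{k,p_i}$ and $T_{g^kh}$ are constant on the set of generators of each $\langle g^d\rangle$, and counting those generators correctly as $\varphi(N/d)$, while being careful that $d=N$ (i.e.\ $g^k$ trivial, $k$ a multiple of $N$) is excluded — consistent with $a_{k,p_i}=0$ when $(k,p_i^{n_i})=p_i^{n_i}$ only for the $p_i$-part, so the exclusion of $d=N$ must come from the range $k\in\{1,\dots,N-1\}$ itself. The analytic input (genus-zero property of each $T_{g^dh}$) is entirely borrowed from monstrous moonshine and requires no new work.
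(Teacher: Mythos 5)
Your proof is correct and is essentially the paper's own argument, just spelled out in detail: the paper's one-line proof (``By Theorem~\ref{2.2kai} and the fact that McKay--Thompson series depend only on conjugacy classes'') is exactly the termwise application of Theorem~\ref{2.2kai} followed by the regrouping over divisors $d = \gcd(k,N)$ that you carry out. Your use of Galois-invariance of $T_{g^kh}$ (rather than the paper's appeal to conjugacy-class invariance) is a valid and arguably cleaner justification of the regrouping, and your bookkeeping — that $a_{k,p_i}$ depends only on $(k,p_i^{n_i})$, that there are $\varphi(N/d)$ generators of $\langle g^d\rangle$ among $k\in\{1,\dots,N-1\}$, and that $d=N$ is excluded by the range of $k$ — is all accurate.
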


\begin{proof}
By Theorem~\ref{2.2kai} and the fact that McKay--Thompson series depend only on conjugacy classes.
\end{proof}

For example, we consider a case that $g\in \mathbb{M}$ has order $N=15=3\cdot 5$. Then by using the 3-Brauer character, we have \begin{gather*}
a_{1,3}=\frac{1}{\varphi(15)+\varphi(15/5)}\left(1-0-\frac{1-0-1}{3}\right)=\frac{1}{10}, \qquad a_{3,3}=0, \\ a_{5,3}=\frac{1}{\varphi(15)+\varphi(15/5)}\left(1-0-\frac{1-0-1}{3}\right)=\frac{1}{10}.
\end{gather*} Hence, for any 15-regular element $h\in C_{\mathbb{M}}(g)$, we have
\[
\sum_{n\in \mathbb{Z}}\widetilde{\operatorname{Tr}}_3\big(h|\hat{H}^*(g,V_n\otimes \mathbb{Z}_3)\big)q^{n-1}= \frac{4}{5}T_{gh}(\tau)+\frac{1}{5}T_{g^5h}(\tau).
\]
Similarly, by using the 5-Brauer character, we have
\begin{gather*}
a_{1,5}=\frac{1}{\varphi(15)+\varphi(15/3)}\left(1-0-\frac{1-0-1}{5}\right)=\frac{1}{12},\\
 a_{3,5}=\frac{1}{\varphi(15)+\varphi(15/3)}\left(1-0-\frac{1-0-1}{5}\right)=\frac{1}{12}, \qquad a_{5,5}=0.
\end{gather*}
 Hence, for any 15-regular element $h\in C_{\mathbb{M}}(g)$, we have
\[
\sum_{n\in \mathbb{Z}}\widetilde{\operatorname{Tr}}_5\big(h|\hat{H}^*(g,V_n\otimes \mathbb{Z}_5)\big)q^{n-1}
= \frac{2}{3}T_{gh}(\tau)+\frac{1}{3}T_{g^3h}(\tau).
\]
In particular, for $g\in 15A$, we have the following relation from Theorem \ref{15A21A}:
\begin{gather*}
\sum_{n\in \mathbb{Z}}\widetilde{\operatorname{Tr}}_3\big(h|\hat{H}^0(g,V_n\otimes \mathbb{Z}_3)\big)q^{n-1}= \frac{4}{5}T_{gh}(\tau)+\frac{1}{5}T_{g^5h}(\tau),\\
\sum_{n\in \mathbb{Z}}\widetilde{\operatorname{Tr}}_3\big(h|\hat{H}^1(g,V_n\otimes \mathbb{Z}_3)\big)q^{n-1}=0,\\
\sum_{n\in \mathbb{Z}}\widetilde{\operatorname{Tr}}_5\big(h|\hat{H}^0(g,V_n\otimes \mathbb{Z}_5)\big)q^{n-1}= \frac{2}{3}T_{gh}(\tau)+\frac{1}{3}T_{g^3h}(\tau),\\
\sum_{n\in \mathbb{Z}}\widetilde{\operatorname{Tr}}_5\big(h|\hat{H}^1(g,V_n\otimes \mathbb{Z}_5)\big)q^{n-1}= 0.
\end{gather*}

 By a similar calculation, for $g\in 21A$ and any 21-regular element $h\in C_{\mathbb{M}}(g)$, we have the following relation from Theorem \ref{15A21A}:
\begin{gather*}
\sum_{n\in \mathbb{Z}}\widetilde{\operatorname{Tr}}_3\big(h|\hat{H}^0(g,V_n\otimes \mathbb{Z}_3)\big)q^{n-1}= \frac{6}{7}T_{gh}(\tau)+\frac{1}{7}T_{g^7h}(\tau),\\
\sum_{n\in \mathbb{Z}}\widetilde{\operatorname{Tr}}_3\big(h|\hat{H}^1(g,V_n\otimes \mathbb{Z}_3)\big)q^{n-1}=0,\\
\sum_{n\in \mathbb{Z}}\widetilde{\operatorname{Tr}}_7\big(h|\hat{H}^0(g,V_n\otimes \mathbb{Z}_7)\big)q^{n-1}= \frac{2}{3}T_{gh}(\tau)+\frac{1}{3}T_{g^3h}(\tau),\\
\sum_{n\in \mathbb{Z}}\widetilde{\operatorname{Tr}}_7\big(h|\hat{H}^1(g,V_n\otimes \mathbb{Z}_7)\big)q^{n-1}= 0.
\end{gather*}

We propose the following conjecture about the graded $p$-Brauer character for Tate co\-ho\-mo\-logy.

\begin{Conjecture}
 For any element $g \in \mathbb{M}$ of order $N$ with prime factor $p$ and $N$-regular element $h \in C_{\mathbb{M}}(g)$, $\sum_{n\in \mathbb{Z}}\widetilde{\operatorname{Tr}}_p\big(h|\hat{H}^i(g,V_n\otimes \mathbb{Z}_p)\big)q^{n-1}$, $i=0,1$, is a~linear combination of Hauptmoduls.
\end{Conjecture}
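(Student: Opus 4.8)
The plan is to deduce the separate statements for $\hat{H}^0$ and $\hat{H}^1$ from the virtual (super) statement that is already in hand. Theorem~\ref{2.2kai} together with Corollary~\ref{newHaup} shows that the graded super $p$-Brauer character
\[
S^-(\tau)=\sum_{n\in\mathbb{Z}}\widetilde{\operatorname{Tr}}_p\big(h|\hat{H}^0(g,V_n\otimes\mathbb{Z}_p)\big)q^{n-1}-\sum_{n\in\mathbb{Z}}\widetilde{\operatorname{Tr}}_p\big(h|\hat{H}^1(g,V_n\otimes\mathbb{Z}_p)\big)q^{n-1}
\]
equals $\sum_{d\mid N,\,d\neq N}a_{d,p}\varphi(N/d)T_{g^dh}(\tau)$, a linear combination of McKay--Thompson series and hence of Hauptmoduls. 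So it suffices to prove that the graded \emph{sum}
\[
S^+(\tau)=\sum_{n\in\mathbb{Z}}\big(\widetilde{\operatorname{Tr}}_p(h|\hat{H}^0(g,V_n\otimes\mathbb{Z}_p))+\widetilde{\operatorname{Tr}}_p(h|\hat{H}^1(g,V_n\otimes\mathbb{Z}_p))\big)q^{n-1}
\]
is again a linear combination of Hauptmoduls; then $\frac{1}{2}(S^++S^-)$ and $\frac{1}{2}(S^+-S^-)$ are the graded $p$-Brauer characters of $\hat{H}^0$ and $\hat{H}^1$ and are of the required form.

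For $S^+$ I would argue class by class, following the prime-order template of Borcherds and Ryba via two mechanisms. First, the \emph{vanishing mechanism}: whenever $\hat{H}^1(g,V\otimes\mathbb{Z}_p)=0$ we have $S^+=S^-$ and we are done. This applies whenever $V\otimes\mathbb{Z}_p$ decomposes, as in the proofs of Lemma~\ref{lattice15A} and Theorem~\ref{15A21A}, into $\mathbb{Z}_p[\langle g\rangle]$-modules that are permutation modules for the orbit action on the $\Lambda$-grading of $V_\Lambda$; Lemmas~\ref{par},~\ref{closedpar},~\ref{Zpkai} and Proposition~\ref{tensor} then force $\hat{H}^1$ to vanish, and one would work out which composite-order classes of $\mathbb{M}$ are reached this way through elements of $M_{24}\subset\operatorname{Aut}(\Lambda)$ and the $2^{24}.\operatorname{Aut}(\Lambda)$ action, extending the list $15A,21A$. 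Second, the \emph{sign-involution mechanism}: when $\hat{H}^1$ does not vanish --- and, as this section's counterexample to Borcherds's vanishing conjecture shows, this genuinely happens, even for Fricke $g$ --- one seeks an element $\sigma$ of order $2$ in $C_{\mathbb{M}}(g)/O_p(C_{\mathbb{M}}(g))$ such that $O_p(C_{\mathbb{M}}(g))$ acts trivially on $\hat{H}^*(g,V)$ while $\sigma$ acts as $+1$ on $\hat{H}^0$ and as $-1$ on $\hat{H}^1$, exactly as in the $pB$ and $2B$ statements of Section~\ref{section3}. Lifting $\sigma$ to $C_{\mathbb{M}}(g)$ so that $h\sigma$ remains $N$-regular and applying Theorem~\ref{2.2kai} to $h\sigma$ gives
\[
\sum_{n\in\mathbb{Z}}\widetilde{\operatorname{Tr}}_p\big(h\sigma|\hat{H}^*(g,V_n\otimes\mathbb{Z}_p)\big)q^{n-1}=S^+(\tau),
\]
again a linear combination of Hauptmoduls by Corollary~\ref{newHaup}.

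The main obstacle will be establishing the sign involution $\sigma$ in the composite-order case: unlike for prime $N$, the quotient $C_{\mathbb{M}}(g)/O_p(C_{\mathbb{M}}(g))$ need not contain a (central) involution acting with the prescribed signs, and in the absence of a decomposition of $V$ one would have to redo Borcherds's Hodge-theoretic and ``improved no-ghost'' computations with a composite-order element acting on $V\otimes\mathbb{Z}_p$ in order to compare $\dim\hat{H}^0$ with $\dim\hat{H}^1$ and pin down the action of $O_p(C_{\mathbb{M}}(g))$. A secondary difficulty is bookkeeping: the conjecture is asserted one prime $p\mid N$ at a time, the two mechanisms partition the conjugacy classes of $\mathbb{M}$ differently at different primes, and for $N$ with several prime factors one must check compatibility of the arguments run at each prime. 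Finally, the case $p=2$ (so $N$ even) is delicate, since $h\sigma$ may fail to be $N$-regular; there I would expect to replace $\sigma$ by a $\tau\mapsto\tau+1/2$ shift, as in the $2B$ case of Section~\ref{section3}.
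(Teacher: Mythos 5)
The statement you are asked to prove is labeled as a \emph{Conjecture} in the paper, and the paper offers no proof of it. The only thing the paper says after stating the conjecture is exactly the reduction you begin with: the ``super'' series $S^-$ is already known to be a linear combination of Hauptmoduls (Theorem~\ref{2.2kai} and Corollary~\ref{newHaup}), and \emph{if} one also knew that $S^+$ were such a linear combination, then $\tfrac12(S^++S^-)$ and $\tfrac12(S^+-S^-)$ would give the $\hat H^0$- and $\hat H^1$-series. That is literally the content of the paragraph following the conjecture in the paper. So the first half of your proposal reproduces the paper's own remark accurately; there is no ``paper's proof'' to compare against beyond that.

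The remainder of your proposal is a strategy sketch, not a proof, and you candidly say so. Both mechanisms you name are reasonable extrapolations of the prime-order template in Section~\ref{section3}: the vanishing mechanism (extend the permutation-module argument behind Lemma~\ref{lattice15A} and Theorem~\ref{15A21A} to more composite-order classes), and the sign-involution mechanism (extend the $pB$/$2B$ results to produce a central $\sigma$ splitting $\hat H^0$ from $\hat H^1$). Neither is carried out; you correctly identify the obstacles. One point you underplay: the $N$-regularity problem for $h\sigma$ is not specific to $p=2$. Whenever $N$ is even, $h$ being $N$-regular forces $|h|$ odd, so $h\sigma$ has even order for \emph{any} order-$2$ involution $\sigma$ and can never be $N$-regular, whatever prime $p\mid N$ you are working at; the $\tau\mapsto\tau+1/2$ substitution is then the only available route, and it is not clear how to adapt Theorem~\ref{2.2kai} to that setting. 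Also worth flagging: the very counterexample the paper produces (class $8A$, where $\hat H^1(g,V)\neq0$ and yet $g$ is Fricke) shows that neither mechanism is established even for that single class, so the conjecture is genuinely open there. In short, your proposal is a correct restatement of the known reduction and a plausible roadmap, but it does not and cannot be said to match ``the paper's proof,'' because the paper proves nothing here --- you have correctly located the gap, but it remains a gap.
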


If we have that the series
\[
\sum_{n\in \mathbb{Z}}\widetilde{\operatorname{Tr}}_p\big(h|\hat{H}^0(g,V_n\otimes \mathbb{Z}_p)\big)q^{n-1}+\sum_{n\in \mathbb{Z}}\widetilde{\operatorname{Tr}}_p\big(h|\hat{H}^1(g,V_n\otimes \mathbb{Z}_p)\big)q^{n-1}
\] is a linear combination of Hauptmoduls, then we describe clearly
\[
\sum_{n\in \mathbb{Z}}\widetilde{\operatorname{Tr}}_p\big(h|\hat{H}^i(g,V_n\otimes \mathbb{Z}_p)\big)q^{n-1}, \qquad i=0,1,
\]
by a linear combination of Hauptmoduls.

If conditions 2, 4 and 7 in Borcherds's conjecture at the end of~\cite{B98} are true, then $\hat{H}^1(g,V)$ vanishes for all Fricke elements~$g$.
From Corollary~\ref{newHaup}, we can give a counterexample to this conjecture of Borcherds about vanishing of Tate cohomology $\hat{H}^1(g,V)$.

\begin{Theorem}
Let $V$ be an integral form of the monster vertex algebra. Then there exists a~Fricke element $g$ s.t.\ $\hat{H}^1(g,V)\neq 0$.
\end{Theorem}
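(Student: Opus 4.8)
The strategy is a proof by contradiction, pitting the formula of Corollary~\ref{newHaup} against the trivial non-negativity of a genuine (non-virtual) $p$-Brauer character. Let $g$ be a composite-order Fricke element, of order $N=\prod p_i^{n_i}$, to be specified below, and let $p=p_i$ be a prime factor of $N$. Assume toward a contradiction that $\hat{H}^1(g,V)=0$.

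Since $V=\bigoplus_n V_n$ and Tate cohomology commutes with direct sums, $\hat{H}^1(g,V_n)=0$ for every $n$; Proposition~\ref{tensor} applied to the flat map $\mathbb{Z}\to\mathbb{Z}_p$ then gives $\hat{H}^1(g,V_n\otimes\mathbb{Z}_p)\cong\hat{H}^1(g,V_n)\otimes\mathbb{Z}_p=0$, so $\hat{H}^*(g,V_n\otimes\mathbb{Z}_p)$ coincides with the genuine module $\hat{H}^0(g,V_n\otimes\mathbb{Z}_p)$ for every $n$. Taking $h=1\in C_{\mathbb{M}}(g)$ (which is $N$-regular and central) in Corollary~\ref{newHaup} yields
\[
\sum_{n\in\mathbb{Z}}\widetilde{\operatorname{Tr}}_p\big(1|\hat{H}^0(g,V_n\otimes\mathbb{Z}_p)\big)q^{n-1}
=\sum_{d\mid N,\,d\neq N}a_{d,p}\,\varphi(N/d)\,T_{g^d}(\tau).
\]
For any finite-length module $M$ one has $v(p)\,\widetilde{\operatorname{Tr}}_p(1|M)=\sum_i\operatorname{Tr}\big(1|\widetilde{M_i}\otimes K\big)=\sum_i\operatorname{rank}_R\widetilde{M_i}=\ell_R(M)$, a non-negative integer; hence every Fourier coefficient of the left-hand series is $\ge0$.

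It remains to choose $g$ so that the right-hand series acquires a negative Fourier coefficient — the only step that is not purely formal. By Theorem~\ref{2.2kai} each coefficient $a_{d,p}$ is $\ge0$ (the factor $n_i-l-\frac{n_i-l-1}{p}$ is positive whenever $l\le n_i-1$), so a sign change can only come from a McKay--Thompson series $T_{g^d}$, $d$ a proper divisor of $N$, with large negative Fourier coefficients. I expect a small composite-order Fricke class to do the job, e.g.\ an element $g$ of order $6$ whose cube lies in $2B$: then for $p=2$ the right-hand side is $\frac23T_g(\tau)+\frac13T_{2B}(\tau)$, and since $T_{2B}(\tau)=q^{-1}+276q-2048q^2+\cdots$ its $q^2$-coefficient is $\frac23\operatorname{Tr}(g|V_3)-\frac{2048}{3}$, which is negative because $\operatorname{Tr}(g|V_3)$ is far below $1024$. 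The main obstacle is exactly this bookkeeping: one must name a concrete composite-order Fricke class, read off the power-map classes of the relevant $g^d$ from the character table of $\mathbb{M}$ and the tables in \cite{CN}, pin down the $a_{d,p}$ from Theorem~\ref{2.2kai}, and verify that $\sum_{d\mid N,\,d\neq N}a_{d,p}\varphi(N/d)\operatorname{Tr}(g^d|V_{n_0})<0$ for some degree $n_0$. For such $g$ and $n_0$, comparing the $q^{n_0-1}$-coefficients in the displayed identity contradicts the non-negativity of the left-hand side, so $\hat{H}^1(g,V)\neq0$.
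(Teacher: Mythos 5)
Your proposal takes the same approach as the paper: use Corollary~\ref{newHaup} at $h=1$ to write the $p$-Brauer character of $\hat{H}^*(g,V_n)$, which is the $\mathbb{Z}_p$-length of $\hat{H}^0(g,V_n)$ minus the $\mathbb{Z}_p$-length of $\hat{H}^1(g,V_n)$, as a rational combination of McKay--Thompson series, and then find a Fricke class and a degree where this combination is strictly negative. The reduction via Proposition~\ref{tensor} (or Lemma~\ref{Zpkai}), the observation that for $h=1$ the $p$-Brauer character over $R=\mathbb{Z}_p$ equals ordinary length and is hence non-negative, and the plan to exploit a non-Fricke power such as $2B$ with its large negative coefficients, are all correct and match the paper's reasoning. (The contradiction framing is a slight detour; the paper argues directly that if the virtual length is negative then $\hat{H}^1\neq0$.)

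The genuine gap is that you never actually produce the counterexample, and you say so yourself: ``The main obstacle is exactly this bookkeeping.'' You propose ``an element $g$ of order $6$ whose cube lies in $2B$'' but do not name the class, verify it is Fricke, confirm the power map from the ATLAS, or compute a specific Fourier coefficient. Since the theorem asserts existence, exhibiting a concrete class with a verified negative coefficient is the substance of the proof, not an afterthought. The paper takes $g\in 8A$ (Fricke, of prime-power order $8$) with $p=2$, so Corollary~\ref{newHaup} gives
\[
\sum_{n}\operatorname{length}_2\big(\hat{H}^*(g,V_n)\big)q^{n-1}
=2T_{8A}(\tau)+\tfrac34T_{4C}(\tau)+\tfrac14T_{2B}(\tau),
\]
and comparison of $q^2$-coefficients yields $\operatorname{length}_2\hat{H}^0(g,V_3)-\operatorname{length}_2\hat{H}^1(g,V_3)=-256$, forcing $\hat{H}^1(g,V_3)\neq0$. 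Your order-$6$ idea may also work (one would use the class $6B$, Fricke of type $6{+}6$, with $(6B)^3=2B$, giving $\tfrac23T_{6B}+\tfrac13T_{2B}$), but you would still need to check Frickeness, the power map, and the numerical value of the coefficient before the argument is complete.
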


\begin{proof}
Let $g$ be an element of the monster in class $8A$. Then $g$ is a Fricke element. By Lemma~\ref{Zpkai} and Corollary~\ref{newHaup}, we have
\begin{align*}
\sum_{n \in \mathbb{Z}}\operatorname{length}_2\big(\hat{H}^*(g,V_n)\big)q^{n-1} &=a_{1,2}\varphi(8)T_g(\tau)+a_{2,2}\varphi(4)T_{g^2}(\tau)+a_{4,2}\varphi(2)T_{g^4}(\tau) \\
 &= 2T_{8A}(\tau)+\frac{3}{4}T_{4C}(\tau)+\frac{1}{4}T_{2B}(\tau),
\end{align*}
where $\operatorname{length}_2(\cdot)$ means length of a composition series of the $\mathbb{Z}_2$-module.
Comparing coefficients of $q^2$, we have
\[
\operatorname{length}_2\big(\hat{H}^0(g,V_3)\big)-\operatorname{length}_2\big(\hat{H}^1(g,V_3)\big)=-256.
\]
This means $\hat{H}^1(g,V_3)\neq 0$. Hence, $\hat{H}^1(g,V)\neq 0$.
\end{proof}

 By Corollary \ref{newHaup}, if $g$ is a Fricke element such that $g^j$ is a~Fricke element for any $j> 0$, then we have that all coefficients of $\sum_{n\in \mathbb{Z}}\operatorname{length}_{p_i}\big(\hat{H}^*(g,V_n\otimes \mathbb{Z}_{p_i})\big)q^{n-1}$ are positive, where $\operatorname{length}_{p_i}(\cdot)$ means length of a composition series of the $\mathbb{Z}_{p_i}$-module. Hence, we propose a weaker version of Borcherds's conjecture about vanishing of Tate cohomology $\hat{H}^1(g,V)$.

\begin{Conjecture}
Let $V$ be an self-dual integral form of the monster vertex algebra. If $g$ is a~Fricke element such that $g^j$ is a~Fricke element for any $j> 0$, then $\hat{H}^1(g,V)=0$.
\end{Conjecture}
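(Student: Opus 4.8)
The plan is to peel off reductions until only a vanishing statement about the $\mathbb{Z}_p[\langle g\rangle]$-module structure of the graded pieces $V_n$ remains, and then to control that structure by generalizing the two prime-order techniques of Borcherds and Ryba. First, since $g$ preserves the grading, $\hat H^1(g,V)=\bigoplus_n\hat H^1(g,V_n)$, so it suffices to kill each $\hat H^1(g,V_n)$. Write $N=|g|=\prod p_i^{n_i}$. The $p_i$-primary part of the finite abelian group $\hat H^1(g,V_n)=H^1(\langle g\rangle,V_n)$ is a direct summand of $H^1\big(\langle g^{N/p_i^{n_i}}\rangle,V_n\big)$, because restriction to the Sylow $p_i$-subgroup admits a retraction up to a unit on $p_i$-primary parts; hence it is enough to prove $\hat H^1(g',V)=0$ for $g'=g^{N/p_i^{n_i}}$, which has prime-power order $p_i^{n_i}$ and is again a Fricke element all of whose powers are Fricke. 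So we may assume $N=p^k$, and then Proposition~\ref{tensor} and Lemma~\ref{Zpkai} identify $\hat H^1(g,V_n)$ with $\hat H^1(g,V_n\otimes\mathbb{Z}_p)$; it remains to show $\hat H^1(g,V_n\otimes\mathbb{Z}_p)=0$ for all $n$.

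Next I would record the numerical constraint. Taking $h=1$ in Corollary~\ref{newHaup} and using that $\widetilde{\operatorname{Tr}}_p(1|M)=\operatorname{length}_p(M)$ for finite $\mathbb{Z}_p$-modules $M$ gives
\[
\sum_{n\in\mathbb{Z}}\big(\operatorname{length}_p\hat H^0(g,V_n\otimes\mathbb{Z}_p)-\operatorname{length}_p\hat H^1(g,V_n\otimes\mathbb{Z}_p)\big)q^{n-1}=\sum_{l=0}^{k-1}c_l\,T_{g^{p^l}}(\tau),
\]
where $c_l=a_{p^l,p}\varphi\big(p^{k-l}\big)>0$ by Theorem~\ref{2.2kai}. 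Because $g$ and every power of $g$ is Fricke, each $T_{g^{p^l}}$ is a Fricke Hauptmodul, whose $q$-expansion coefficients away from the pole are non-negative; hence the right-hand side has non-negative coefficients, i.e.\ $\operatorname{length}_p\hat H^0(g,V_n\otimes\mathbb{Z}_p)\geq\operatorname{length}_p\hat H^1(g,V_n\otimes\mathbb{Z}_p)$ for all $n$. This is exactly the observation made just before the conjecture; by itself it does not force $\hat H^1$ to vanish, so an independent upper bound on $\operatorname{length}_p\hat H^1$ matching the displayed series is still needed.

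To obtain that bound I would pursue the two routes used for prime order. When a lift of $g$ lies in $2^{24}.\operatorname{Aut}(\Lambda)$ and acts on $\Lambda$ with $\hat H^1(g,\Lambda)=0$, I would show that the relevant integral pieces of $V$ are permutation $\mathbb{Z}_p[\langle g\rangle]$-modules --- built, as in the proofs of Lemma~\ref{lattice15A} and Theorem~\ref{15A21A}, from $V_\Lambda$ by Lemmas~\ref{par} and~\ref{closedpar}, but now without assuming that $g$ has odd prime order or order $15$ or $21$ --- so that $\hat H^{-1}(g,\cdot)=0$ and hence $\hat H^1(g,\cdot)=0$ by $2$-periodicity. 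For the classes not reached this way (for instance an order-$9$ Fricke class, which lies in no copy of $M_{24}$), I would instead develop a composite-order analogue of the large-order argument: an improved ``no-ghost'' theorem for abelian orbifolds of $V^\natural$ recovering not merely graded traces but the full $\mathbb{Z}_p[\langle g\rangle]$-module structure of the fixed-point pieces, together with Hodge-theoretic estimates, so as to compute $\sum_n\big(\operatorname{length}_p\hat H^0+\operatorname{length}_p\hat H^1\big)(g,V_n\otimes\mathbb{Z}_p)q^{n-1}$ and identify it with the series above --- the two being equal would force $\operatorname{length}_p\hat H^1(g,V_n\otimes\mathbb{Z}_p)=0$ for every $n$.

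The main obstacle is this last ingredient. The permutation-module route already recovers $15A$ and $21A$ (that is, Theorem~\ref{15A21A}) but stops short of classes outside $\operatorname{Aut}(\Lambda)$-realizable frames, and the composite-order ``no-ghost''/orbifold machinery needed to control the $\mathbb{Z}_p[\langle g\rangle]$-module --- not just the dimensions --- of the invariant sub-vertex-algebras is not available in the literature; this is why the statement is offered only as a conjecture, and also why self-duality of $V$ is imposed, since it is what makes that vertex-algebra input well-behaved. A secondary and more routine task is to pin down the two auxiliary facts used above: the non-negativity of the $q$-coefficients of Fricke Hauptmoduls, and the determination of which Monster classes $g$ satisfy that $g^j$ is Fricke for all $j>0$.
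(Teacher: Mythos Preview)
The statement you are addressing is a \emph{Conjecture} in the paper, not a theorem: the paper offers no proof, only the motivating observation (immediately preceding the conjecture) that under the stated hypothesis the combination $\sum_{d\mid N,\,d\neq N}a_{d,p_i}\varphi(N/d)T_{g^dh}$ has non-negative coefficients, so that the obvious numerical obstruction to $\hat H^1=0$ disappears. There is therefore no ``paper's own proof'' to compare against, and your proposal is, appropriately, a strategy rather than a proof; you yourself flag the decisive gap.

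A few comments on the strategy. Your reduction to prime-power order via restriction--corestriction to the Sylow $p_i$-subgroup $\langle g^{N/p_i^{n_i}}\rangle$ is correct and is a genuine simplification the paper does not record: since $\mathrm{cor}\circ\mathrm{res}=[\,\langle g\rangle:\langle g'\rangle\,]$ is a $p_i$-adic unit, the $p_i$-primary part of $\hat H^1(g,V_n)$ injects into $\hat H^1(g',V_n)$, and the hypothesis ``every power of $g$ is Fricke'' is inherited by $g'$. This cleanly reduces the conjecture to the case $|g|=p^k$, where for $k=1$ it is the Borcherds--Ryba theorem.

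Where your outline remains a conjectural programme is exactly where you say it does. The permutation-module route (Lemmas~\ref{par}--\ref{closedpar}, Lemma~\ref{lattice15A}) only reaches classes admitting a suitable lift to $2^{24}.\operatorname{Aut}(\Lambda)$, and the ``large-order'' route would require a composite-order refinement of the no-ghost/Hodge argument that controls the full $\mathbb{Z}_p[\langle g\rangle]$-module structure of the graded pieces, not merely dimensions; no such refinement is available. The auxiliary claim that Fricke McKay--Thompson series have non-negative $q$-coefficients is used implicitly by the paper as well (in the sentence before the conjecture) and, while widely believed and checkable class by class, is not proved there either. In short: your plan is sound as a roadmap, your reduction step is a real improvement, and the obstacles you name are precisely why the statement is posed as a conjecture.
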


 Suppose that $g$ has order $N$ and that the prime factorization of $N$ is $\prod p_i^{n_i}$. If we have that
\[
\operatorname{length}_{p_i}\big(\hat{H}^0(g,V_n\otimes\mathbb{Z}_{p_i})\big)-\operatorname{length}_{p_i}\big(\hat{H}^1(g,V_n\otimes\mathbb{Z}_{p_i})\big)
\]
 is equal to
\[
\operatorname{length}_{p_i}\big(\hat{H}^0(g,V_n\otimes\mathbb{Z}_{p_i})\big)+\operatorname{length}_{p_i}\big(\hat{H}^1(g,V_n\otimes\mathbb{Z}_{p_i})\big)
\]
for any $p_i$ and $n\in \mathbb{Z}$, then it will follow that $\hat{H}^1(g,V)=0$.

\subsection*{Acknowledgements}

I would like to thank Scott Carnahan for many helpful comments and advice. I would also like to thank the anonymous referees for many helpful comments.

\pdfbookmark[1]{References}{ref}
\LastPageEnding

\end{document}